\newenvironment{cproof}{\begin{proof}[Proof of the 
		claim]}{\end{proof}}
\newtheoremstyle{capitale}    
    {\topsep}                    
    {\topsep}                    
    {\itshape}                   
    {}                           
    {\scshape}                   
    {. ---}                          
    {.5em}                       
    {} 
\newtheoremstyle{remark}    
    {\topsep}                    
    {\topsep}                    
    {}                   
    {}                           
    {\scshape}                   
    {. ---}                          
    {.5em}                       
    {} 
\theoremstyle{capitale}
\newtheorem{theorem}{Theorem}[section]
\newtheorem*{theorem*}{Theorem}
\newtheorem{lemma}[theorem]{Lemma}
\newtheorem*{claim*}{Claim}
\newtheorem{claim}{Claim}
\newtheorem*{lemme*}{Lemme}
\newtheorem{corollary}[theorem]{Corollary}
\newtheorem{definition}[theorem]{Definition}
\newtheorem*{definition*}{Definition}
\newtheorem{fact}[theorem]{Fact}
\newtheorem*{fact*}{Fact}
\newtheorem*{notation}{Notation}
\newtheorem{proposition}[theorem]{Proposition}
\theoremstyle{remark}
\newtheorem{remark}[theorem]{Remark}
\newtheorem{example}[theorem]{Example}
\newtheorem{question}[theorem]{Question}
\newcommand{\BB}{\mathcal{B}}
\newcommand{\PP}{\mathcal{P}}
\newcommand{\FF}{\mathcal{F}}
\newcommand{\GG}{\mathcal{G}}
\newcommand{\HH}{\mathcal{H}}
\newcommand{\LL}{\mathcal{L}}
\newcommand{\KK}{\mathcal{K}}
\newcommand{\UU}{\mathcal{U}}
\newcommand{\TT}{\mathcal{T}}
\newcommand{\D}{\mathbb{D}}
\newcommand{\Isom}{\mathrm{Isom}}
\newcommand{\Ind}{\mathrm{Ind}}
\renewcommand{\AA}{\mathcal{A}}
\newcommand{\EE}{\mathcal{E}}
\newcommand{\CC}{\mathcal{C}}
\renewcommand{\L}{\mathrm{L}}
\newcommand{\MAlg}{\mathrm{MAlg}}
\newcommand{\E}{\mathbb{E}}
\newcommand{\A}{\mathbf{A}}
\newcommand{\B}{\mathbf{B}}
\newcommand{\N}{\mathbb{N}}
\newcommand{\Z}{\mathbb{Z}}
\newcommand{\Q}{\mathbb{Q}}
\newcommand{\U}{\mathbb{U}}
\newcommand{\R}{\mathbb{R}}
\newcommand{\C}{\mathbb{C}}
\newcommand{\M}{\mathbf{M}}
\newcommand{\Sym}{\mathrm{Sym}}
\newcommand{\inv}{^{-1}}
\newcommand{\defin}[1]{\textbf{\textit{#1}}}
\newcommand{\Stab}{\mathrm{Stab}}
\newcommand{\dom}{\mathrm{dom}}
\newcommand{\Aut}{\mathrm{Aut}}
\newcommand{\tp}{\mathrm{tp}}
\newcommand\Nm[2]{{#1}_{\{#2\}}}
\newcommand{\indep}{%
\mathrel{\reflectbox{\rotatebox[origin=c]{90}{$\models$}}}}
\newcommand{\supp}{\mathrm{supp}}
\newcommand{\x}{\bar{x}}
\newcommand{\y}{\bar{y}}
\newcommand{\Addresses}{{
			\bigskip
			\footnotesize
			\noindent R.~Barritault, \textsc{Universite Claude Bernard Lyon 1, CNRS, Centrale Lyon, INSA Lyon, Université Jean Monnet, ICJ UMR5208, 69622
Villeurbanne, France.}\par\nopagebreak\noindent
			\textit{E-mail address: }\texttt{barritault@math.univ-lyon1.fr}

                \medskip

			\noindent C.~Jahel, \textsc{American University of Beirut, \\Beirut, Lebanon}\par\nopagebreak\noindent
			\textit{E-mail address: }\texttt{cj21@aub.edu.lb}
			
			\medskip
			
			\noindent M.~Joseph, \textsc{University of Paris-Cité, Institut de Mathématiques de Jussieu-Paris Rive Gauche,Place Aurélie Nemours, 75013 Paris (France)}\par\nopagebreak\noindent
			\textit{E-mail address: }\texttt{joseph@imj-prg.fr}
	
			\medskip

	}}
\title{On dissociated infinite permutation groups}
\author{Rémi BARRITAULT, Colin JAHEL, Matthieu JOSEPH}
\date{}
\begin{document}
\maketitle
\begin{abstract}
    The goal of this paper is threefold. First, we describe the notion of dissociation for closed subgroups of the group of permutations on a countably infinite set and explain its numerous consequences on unitary representations (classification of unitary representations, Property (T), the Howe-Moore property, etc.) and on ergodic actions (non-existence of type III non-singular actions, Stabilizer rigidity, etc.). Some of the results presented here are new, others were proved in different contexts (notably some results of Tsankov). Second, we introduce a new method to prove dissociation. It is based on a reinforcement of the classical notion of strong amalgamation, where we allow to amalgamate over countable sets. Third, we apply this technique of amalgamation to provide new examples of dissociated closed permutation groups, including isometry groups of some countable metrically homogeneous spaces, automorphism groups of diversities, and more.  
\end{abstract}

\noindent\textbf{MSC:}  Primary: 22A25, 22F50. Secondary: 37A15, 60G09.\\
\textbf{Keywords: } Infinite permutation groups, Unitary representations, Type I groups, Property (T), The Howe-Moore property, Stabilizer rigidity, de Finetti Theorem.

\tableofcontents

\section*{Introduction}

In this paper, we gather together under a common name two techniques -- \emph{independence and orthogonality} -- that were used many times in the past in the study of infinite permutation groups. \emph{Dissociation}, which is the main notion of the present paper, finds its roots in exchangeability theory. In that context, dissociation of a random process corresponds to \emph{independence}. It is a central notion designed to generalize the famous de Finetti's theorem and has been extensively studied especially in the setting of random arrays \cite{Aldous,KallenbergSymmetries}. In the last decades, this notion of dissociation has percolated in a domain at the intersection of probability and model theory, which studies random structures that are invariant under the automorphism group of some structure (see e.g.\ \cite{CraneTowsner, DiaconisJanson, JT}).

Another technique that has been widely used is that of \emph{orthogonality}, that leads to classification results of unitary representations for many automorphism groups of structures \cite{BJJ24, liebermann, Tsankov}. One of the aims of this paper is to explain how these techniques of independence and orthogonality are one and the same, which we call dissociation.

An important motivation for introducing this notion is the fact that this phenomenon has already been observed for the class of $\aleph_0$-categorical structures in a weaker form \cite[Prop.~3.2]{JT}. This observation has consequences, some ergodic theoretic \cite{jahel2023stabilizers,JT, tsankov2024nonsingularprobabilitymeasurepreservingactions}, others on unitary representations \cite{Tsankov}. Dissociation beyond the $\aleph_0$-categorical case was already applied to the Urysohn spaces by the authors \cite{BJJ24}. We now intend to start a more systematic study of this property.

\begin{notation}
    In this paper, $\Omega$ stands for a countably infinite set and $\Sym(\Omega)$ for the group of all permutations of $\Omega$, equipped with the topology of pointwise convergence. A closed permutation group (on $\Omega$) is a closed subgroup $G$ of $\Sym(\Omega)$. For every $A\subseteq \Omega$, we denote by $G_A$ the \textit{pointwise stabilizer} of $A$ and by $\Nm G A$ the \emph{setwise stabilizer} of $A$, that is:
    $$G_A \coloneqq \{g \in G, \ \forall a \in A, \ g(a)=a\} \quad \text{and} \quad  \Nm G A \coloneqq \{g\in G, \ g(A)=A\}.$$
    We emphasize the fact that the data of a permutation group does not reduce to that of a topological group. It is a specific presentation as a subgroup of $\Sym(\Omega)$ and, in particular, $\Omega$ itself is part of the data.
\end{notation}

\paragraph{Dissociation for unitary representations.} Let $G$ be a closed permutation group on $\Omega$. A \emph{unitary representation} of $G$ on a Hilbert space $\HH$ is a group homomorphism $\pi\colon G\to\UU(\HH)$ such that for all $\xi\in\HH$, the map $g\mapsto \pi(g)\xi$ is continuous. In other words, we require $\pi$ to be continuous for the strong operator topology on the unitary group $\mathcal{U}(\mathcal{H})$.

For every finite subset $A\subseteq\Omega$, we will denote by 
$$\HH_A\coloneqq \{\xi\in \HH, \ \forall g\in G_A, \ \pi(g)\xi = \xi\} $$ the closed subspace of $G_A$-invariant vectors and write $p_A$ for the orthogonal projection onto $\HH_A$. One key fact about unitary representations of permutation groups is that invariant vectors determine the whole representation (see e.g.\ \cite[Lem.~3.1]{Tsankov}), that is to say:
$$\HH = \overline{\bigcup\{\HH_{A}, \ A\subseteq \Omega \text{ finite}\}}.$$
Dissociation is about how the invariant spaces are arranged with respect to each other.

\begin{definition*}
The unitary representation $\pi$ is \defin{dissociated} if for all $A,B\subseteq\Omega$ finite, we have $p_Ap_B=p_{A\cap B}$. Geometrically, this means that the subspaces $\HH_A\cap (\HH_{A\cap B})^\bot$ and $\HH_B\cap (\HH_{A\cap B})^\bot$ are orthogonal. 
\end{definition*}

One notable feature that we will use in the paper is that there is no real need to reduce our attention to finite subsets of $\Omega$. Indeed, for any subset $A \subseteq \Omega$, one can define:
$$\HH_A \coloneqq \overline{\bigcup\{\HH_{B}, \ B\subseteq A \text{ finite}\}}$$
and we still denote by $p_A$ the orthogonal projection onto $\HH_A$. Dissociation is then equivalent to $p_Ap_B = p_{A \cap B}$ for all $A, B \subseteq \Omega$, not necessarily finite.

\paragraph{Dissociation for Boolean p.m.p.\ actions.} Let $G$ be a closed permutation group on $\Omega$. A Boolean p.m.p.\ action of $G$ on a standard probability space $(X,\mu)$ is a homomorphism $\alpha\colon G\to\Aut(X,\mu)$ which is continuous for the weak topology on $\Aut(X,\mu)$. For every finite subset $A\subseteq\Omega$, let $\FF_A$ be the $\sigma$-algebra of $G_A$-invariant measurable subsets, i.e.:
$$\FF_A\coloneqq \{Y \subseteq X,\ Y \text{ is measurable and } \forall g\in G_A,\ \mu(g Y\triangle Y)=0\}.$$
The Boolean p.m.p.\ action $\alpha$ is \defin{dissociated} if for every finite subset $A,B\subseteq\Omega$, the $\sigma$-algebras $\FF_A$ and $\FF_B$ are independent conditionally on $\FF_{A\cap B}$.

\paragraph{Dissociated groups.} In Section \ref{sec.definition.dissociation}, we discuss the relation between dissociation for unitary representations and for Boolean p.m.p.\ actions. We prove that such an action is dissociated if and only if its Koopman representation is dissociated. We then prove that for a closed permutation group $G$, every unitary representation is dissociated if and only if every Boolean p.m.p.\ action is dissociated. If this holds, we say that $G$ is \defin{dissociated}. 

\paragraph{Compendium on dissociated groups.} One of the goals of this paper is to collect in one place some results related to dissociation that were either proved in other papers, e.g.\ \cite{BJJ24, jahel2023stabilizers, tsankov2024nonsingularprobabilitymeasurepreservingactions, Tsankov}, or are proved in the present paper.

\begin{theorem*}\label{thm.intro.main}
    Let $G$ be a closed permutation group on $\Omega$. If $G$ is dissociated, then the following statements hold. 

    \begin{enumerate}[leftmargin=20px, label=\Roman*.]

        \item Unitary representations:
        \begin{enumerate}[leftmargin=15px, label=\arabic*)]
            \item \emph{\textbf{Rigidity of unitary representations} (Corollary \ref{cor.classification.dissociated.unirep}).} Every irreducible unitary representation of $G$ is induced from an irreducible representation of the setwise stabilizer $G_{\{A\}}\coloneqq\{g\in G\colon g(A)=A\}$ for some finite  $A\subseteq\Omega$, which is trivial on the pointwise stabilizer $G_A$. Moreover, every unitary representation of $G$ is a direct sum of irreducible ones. 
        \item \emph{\textbf{Property (T)} (Theorem \ref{thm.dissociated.property(T)}).} $G$ has Kazhdan's Property (T).
        
          \item \emph{\textbf{The Howe-Moore property} (Theorem \ref{thm.Howe.Moore.dissociated}).} If every proper open subgroup of $G$ is coarsely bounded, then $G$ has the Howe-Moore property.
        \end{enumerate}
        \item Ergodic theory:
        \begin{enumerate}[leftmargin=15px, label=\arabic*)]
            \item \emph{\textbf{Rigidity of non-singular actions} (Theorem \ref{thm.nonsingular.induced}).} Every non-singular ergodic action of $G$ is induced by a probability measure-preserving action of the setwise stabilizer $G_{\{A\}}$ for some finite subset $A\subseteq\Omega$. Moreover, every non-singular action of $G$ is a disjoint union of ergodic ones. 
        \item\emph{\textbf{Stabilizer rigidity} (Theorem \ref{thm.pmprigiddissociatedexpanded}).} If $G$ is a proper subgroup of $\Sym(\Omega)$ which acts transitively on $\Omega$, then any Borel p.m.p.\ ergodic action of $G$ is either essentially free or essentially transitive. 
    \item\emph{\textbf{Rigidity of invariant random processes} (Corollary \ref{cor.deFinetti}).} If $G$ acts transitively on $\Omega$, then every random process $(X_\omega)_{\omega\in\Omega}$ whose law is $G$-invariant and ergodic is i.i.d.
        \end{enumerate}
    \end{enumerate}
\end{theorem*}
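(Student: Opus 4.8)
The plan is to treat this omnibus theorem as a cascade of consequences of a single structural result, the classification of unitary representations in item~I.1, together with the dictionary between Boolean p.m.p.\ actions and their Koopman representations announced in Section~\ref{sec.definition.dissociation}. I would establish I.1 first and deduce the remaining representation-theoretic items from it, then pass to the ergodic statements through the measure-theoretic translation. For I.1, the starting point is a continuity fact: since $G$ is a closed permutation group, the pointwise stabilizers $G_A$ of finite sets form a neighborhood basis of the identity, so every unitary representation $\pi\colon G\to\UU(\HH)$ is smooth, i.e.\ $\HH=\overline{\bigcup\{\HH_A:A\text{ finite}\}}$. Dissociation makes the projections $p_A$ commute with $p_Ap_B=p_{A\cap B}$, so I would perform a Möbius inversion over the lattice of finite subsets: setting $\HH_A^\circ\coloneqq\HH_A\cap\bigcap_{a\in A}\HH_{A\setminus\{a\}}^\perp$, the relation $p_Ap_B=p_{A\cap B}$ yields an orthogonal decomposition $\HH=\bigoplus_{A\text{ finite}}\HH_A^\circ$. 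Because $\pi(g)\HH_B=\HH_{gB}$, the group $G$ permutes the summands along its action on finite subsets, the setwise stabilizer $G_{\{A\}}$ preserves $\HH_A^\circ$, and $G_A$ acts trivially on it; summing over one $G$-orbit of finite sets therefore exhibits the corresponding subrepresentation as induced from the $G_{\{A\}}$-representation on $\HH_A^\circ$. For irreducible $\pi$ the decomposition concentrates on a single orbit and $\HH_A^\circ$ is $G_{\{A\}}$-irreducible, which is exactly I.1, while orthogonality of the summands gives the complete reducibility of an arbitrary representation.

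Granting I.1, the two other representation items become isolation statements in the unitary dual. For Property~(T) (item~I.2) it suffices to show that the trivial representation is isolated: decomposing a representation with almost-invariant vectors into irreducibles via I.1, if it had no invariant vector then every summand would be a nontrivial induced representation, and I would use transitivity of $G\actson\Omega$ and the pairwise orthogonality of the translates $\pi(g)\HH_A^\circ=\HH_{gA}^\circ$ to bound these summands uniformly away from the trivial representation, contradicting almost-invariance. For the Howe--Moore property (item~I.3) I would again reduce, through I.1, to matrix coefficients $\langle\pi(g)\xi,\eta\rangle$ with $\xi,\eta$ living in finitely supported increments; once $g$ leaves every coarsely bounded set the support $gA$ becomes disjoint from that of $\eta$, and orthogonality of increments with disjoint support forces the coefficient to vanish, the coarse boundedness of proper open subgroups being what rules out residual invariant vectors.

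For the ergodic statements I would invoke the equivalence, proved in Section~\ref{sec.definition.dissociation}, that a Boolean p.m.p.\ action is dissociated if and only if its Koopman representation is. The rigidity of non-singular actions (item~II.1) is then the measure-theoretic avatar of I.1: the conditional independence of the $\FF_A$ over $\FF_{A\cap B}$ lets one reconstruct an ergodic action as induced from a p.m.p.\ action of a finite setwise stabilizer $G_{\{A\}}$, the orthogonal decomposition above supplying the disintegration into ergodic pieces. The de Finetti statement (item~II.3) then follows at once: a $G$-invariant process $(X_\omega)_{\omega\in\Omega}$ is a $G$-invariant measure on a product space whose coordinate $\sigma$-algebras sit inside the $\FF_{\{\omega\}}$, so dissociation forces them to be conditionally independent over $\FF_\emptyset$; on an ergodic component $\FF_\emptyset$ is trivial, the coordinates become genuinely independent, and transitivity of $G$ on $\Omega$ makes them identically distributed, that is, i.i.d.

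I expect the genuine work to lie in two places. The first is item~I.1 itself: making the Möbius decomposition $\HH=\bigoplus_A\HH_A^\circ$ rigorous and $G$-equivariant, and correctly identifying each orbit-summand as an honest induced representation, requires analytic care for infinite orbits and a clean passage from finite to arbitrary subsets $A$ (using the extension of $\HH_A$ to infinite $A$ recorded in the introduction). The second, and the only statement that does not follow formally from I.1, is the stabilizer rigidity (item~II.2): here I would run an invariant-random-subgroup argument, using the hypothesis $G\lneq\Sym(\Omega)$ acting transitively together with dissociation to show that the stabilizer of a $\mu$-random point is almost surely trivial or almost surely large enough to collapse the action, thereby producing the essentially-free/essentially-transitive dichotomy. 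This last step is where I anticipate the main obstacle, since it mixes the conditional-independence structure of dissociation with the combinatorics of point stabilizers in a proper transitive subgroup.
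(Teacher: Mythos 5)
Your overall architecture is the right one and matches the paper's: the theorem is a compendium, and the paper proves it by establishing the classification of I.1 (via a permutation-group version of Mackey's Imprimitivity Theorem, \cite[Thm.~3.9]{BJJ24}, which is exactly your orthogonal decomposition $\HH=\bigoplus_A\HH_A^\circ$ read as a system of imprimitivity over $\Omega$), deducing I.3 from it by the same finitely-supported-coset computation you describe, and passing to the ergodic items through the Koopman dictionary of Section \ref{sec.definition.dissociation}. Your treatment of II.3 (coordinate $\sigma$-algebras sit inside the $\FF_{\{\omega\}}$, dissociation forces conditional independence over $\FF_\emptyset$, ergodicity kills $\FF_\emptyset$) is essentially the paper's Lemma \ref{lem.Bernoulli.dissociated} plus Corollary \ref{cor.deFinetti}; note that ergodicity is genuinely needed there, as the corollary is stated for ergodic measures. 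You are also right that II.2 does not follow formally from I.1; the paper outsources it (and the adaptation for II.1) to \cite{jahel2023stabilizers} and \cite{tsankov2024nonsingularprobabilitymeasurepreservingactions}, and the cited proof of II.2 is indeed an analysis of the stabilizer map in the spirit you indicate.

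The genuine gap is item I.2. Reducing Property (T) to ``the trivial representation is isolated among the nontrivial irreducibles'' is correct once every representation splits as a sum of irreducibles, but that isolation statement \emph{is} the theorem: one must produce a single pair $(Q,\varepsilon)$, with $Q$ compact, such that no nontrivial irreducible $\Ind_{G_{\{A\}}}^G(\sigma)$ has a $(Q,\varepsilon)$-invariant unit vector, uniformly over all finite $A\subseteq\Omega$ and all $\sigma$. Your sketch offers no mechanism for this uniformity: the orthogonality of the translates $\HH_{gA}^\circ$ only shows each individual induced representation has no invariant vector, not that its almost-invariant vectors are uniformly obstructed as $|A|\to\infty$; and the appeal to transitivity of $G\actson\Omega$ is not available, since $G$ is not assumed transitive in I.2. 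The paper closes this gap differently: it embeds every nontrivial irreducible into $\ell^2(\Omega^n)$ via $\Ind_{G_{\{A\}}}^G(\sigma)\leq\Ind_{G_A}^G(1)\simeq\ell^2(G/G_A)\leq\ell^2(\Omega^n)$ and then invokes Tsankov's criterion (Theorem \ref{thm.Property(T).Todor}, extracted from Lemmas 6.3--6.5 of \cite{Tsankov}), where the uniform spectral gap for the representations $\ell^2(\Omega^n)$ is actually established. Without that input, or an equivalent uniform estimate, your argument for I.2 does not go through.
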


\paragraph{How to obtain dissociation?} To tackle this question, we change our point of view in Section \ref{sec.uniform.non.algebraicity} and consider closed subgroups of $\Sym(\Omega)$ as groups of automorphisms of countable relational structures and present two general methods for obtaining dissociation. First, through \textit{approximating sequences}, when the automorphism group can be seen as a coherent limit of dissociated groups. Second, by introducing the notion of \textit{strong cofinite amalgamation over countable sets} (abbrev.\ $\sigma$-SAP, see Definition \ref{defn:SAPAS}). Our main new result in this section is Theorem \ref{thm:omega-SAP.dissociated}, which proves that if a ultrahomogeneous structure satisfies $\sigma$-SAP and weakly eliminates imaginaries then its automorphism group is dissociated. The method we use to prove this result was developed by Tsankov to obtain de Finetti-like results \cite[Sec.~4]{tsankov2024nonsingularprobabilitymeasurepreservingactions}.

\paragraph{New examples of dissociated groups.} Finally, we apply in Section \ref{sec.examples} the methods from Section \ref{sec.uniform.non.algebraicity} to provide new examples of dissociated groups, such as isometry groups of some metrically homogeneous spaces, automorphism groups of some diversities and more. In particular, we obtain examples going beyond the more widely studied Roelcke-precompact case, some being locally Roelcke-precompact, others being coarsely bounded but not Roelcke-precompact.

\paragraph{Acknowledgments.} We thank Todor Tsankov for sharing a first version of \cite{tsankov2024nonsingularprobabilitymeasurepreservingactions} which inspired many results in the present paper. A few results in the present paper (e.g.\ Theorem \ref{thm.nonsingular.induced}) are even direct adaptations of his results. We also thank Yves Benoist and Christian Rosendal for pointing out to us the similarities between the notion of dissociation and Mackey's work on systems of imprimitivity.

\section{About the definition of dissociation}\label{sec.definition.dissociation}

\subsection{Dissociation: unitary representations and Boolean p.m.p.\ actions}

Let $(X,\mu)$ be a standard probability space and $\FF_1,\FF_2,\FF_3$ be three $\sigma$-algebras of measurable sets on $X$ satisfying $\FF_2\subseteq\FF_1\cap\FF_3$. We say that $\FF_1$ and $\FF_3$ are \emph{independent conditionally on $\FF_2$} if the following holds: for every $\FF_3$-measurable function $f\in\L^2(X,\mu)$,  we have $\E[f\mid \FF_1]=\E[f\mid \FF_2]$, where $\E[\cdot\mid\FF_i]\colon \L^2(X,\mu)\to\L^2(X,\FF_i,\mu)$ is the conditional expectation.

Let $\HH$ be a Hilbert space and $\HH_1,\HH_2,\HH_3$ be three closed subspaces satisfying $\HH_2\subseteq\HH_1\cap\HH_3$ and let $p_1,p_2,p_3$ be the associated orthogonal projections. We say that $\HH_1$ and $\HH_3$ are \emph{orthogonal conditionally on $\HH_2$}, and write:
$$\HH_1 \bot_{\HH_2} \HH_3 $$
if $p_1p_3= p_2$, i.e.\ if the subspaces $\HH_1\cap\HH_2^\bot$ and $\HH_3\cap\HH_2^\bot$ are orthogonal. Note that if $\HH_1 \bot_{\HH_2} \HH_3 $ then $\HH_2 = \HH_1 \cap \HH_3$.

These two notions of conditional independence/orthogonality are related by the following classical result. 
 
\begin{lemma}\label{lem.equivalence.independence.orthogonality}
    Let $(X,\mu)$ be a standard probability space and $\FF_1,\FF_2,\FF_3$ be three sub-$\sigma$-algebras on $X$ satisfying $\FF_2\subseteq\FF_1\cap\FF_3$. If $L^2(X,\FF_i,\mu)$ denotes the space of square-integrable, $\FF_i$-measurable complex valued functions on $(X,\mu)$ (for $i=1,\ 2,\ 3$), then the following are equivalent. 
    \begin{enumerate}[label=(\roman*)]
        \item $\FF_1$ and $\FF_3$ are independent conditionally on $\FF_2$.
        \item $\L^2(X,\FF_1,\mu)$ and $\L^2(X,\FF_3,\mu)$ are orthogonal conditionally on $\L^2(X,\FF_2,\mu)$. 
    \end{enumerate}
\end{lemma}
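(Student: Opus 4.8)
The plan is to reduce everything to a single operator identity by exploiting the classical identification of conditional expectation with orthogonal projection. Concretely, set $\HH = \L^2(X,\mu)$ and $\HH_i = \L^2(X,\FF_i,\mu)$ for $i=1,2,3$; these are closed subspaces of $\HH$, and the hypothesis $\FF_2 \subseteq \FF_1 \cap \FF_3$ gives $\HH_2 \subseteq \HH_1 \cap \HH_3$, so that the conditional orthogonality relation $\HH_1 \bot_{\HH_2} \HH_3$ is well-defined. The one nontrivial input I would invoke is the standard fact that the orthogonal projection $p_i \colon \HH \to \HH_i$ coincides with the conditional expectation $\E[\cdot \mid \FF_i]$. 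Granting this, both sides of the claimed equivalence become statements about the projections $p_1, p_2, p_3$, and I expect them to match almost verbatim.

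First I would unfold the definition of conditional independence. Saying that $\FF_1$ and $\FF_3$ are independent conditionally on $\FF_2$ means precisely that $\E[f \mid \FF_1] = \E[f \mid \FF_2]$ for every $\FF_3$-measurable $f \in \HH_3$; under the identification above this reads $p_1 f = p_2 f$ for all $f \in \HH_3$. Since $p_3$ maps $\HH$ onto $\HH_3$ and restricts to the identity there, this pointwise condition on $\HH_3$ is equivalent to the operator identity $p_1 p_3 = p_2 p_3$. Next I would use $\HH_2 \subseteq \HH_3$, which yields $p_2 p_3 = p_2$ (projecting onto the larger subspace first does not affect the subsequent projection onto the smaller one it contains). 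Hence conditional independence is equivalent to $p_1 p_3 = p_2$.

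By definition, $p_1 p_3 = p_2$ is exactly the assertion $\HH_1 \bot_{\HH_2} \HH_3$, i.e.\ that $\L^2(X,\FF_1,\mu)$ and $\L^2(X,\FF_3,\mu)$ are orthogonal conditionally on $\L^2(X,\FF_2,\mu)$. This closes the loop and delivers both implications simultaneously. For completeness I would remark that the relation is visibly symmetric in $1$ and $3$: since $p_2$ is self-adjoint, taking adjoints in $p_1 p_3 = p_2$ gives $p_3 p_1 = p_2$, matching the symmetric geometric formulation of orthogonality. The only place where genuine content enters is the projection--conditional-expectation identification; once that is in hand, the argument is a pure translation, the sole minor subtlety being the careful passage between the pointwise equality on $\HH_3$ and the operator equation $p_1 p_3 = p_2 p_3$ (together with the simplification $p_2 p_3 = p_2$). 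I do not anticipate a real obstacle here.
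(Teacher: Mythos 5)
Your proposal is correct and follows essentially the same route as the paper: identify $\E[\cdot\mid\FF_i]$ with the orthogonal projection $p_i$ onto $\HH_i=\L^2(X,\FF_i,\mu)$, translate conditional independence into $p_1f=p_2f$ for all $f\in\HH_3$, and recognize this as the operator identity $p_1p_3=p_2$ defining conditional orthogonality. Your explicit intermediate step $p_1p_3=p_2p_3=p_2$ merely spells out what the paper compresses into one equivalence.
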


\begin{proof}
Write $\HH_i\coloneqq\L^2(X,\FF_i,\mu)$ and $p_i\coloneqq \E[\cdot \mid \FF_i] \colon \L^2(X,\mu) \to \HH_i$ for $i=1,\ 2,\ 3$. Notice that $\FF_2\subseteq\FF_1\cap\FF_3$ implies $\HH_2\subseteq\HH_1\cap\HH_3$. Since $p_i$ is the orthogonal projection onto $\HH_i$, the following equivalences hold:
\begin{align*}
    \FF_1\indep_{\FF_2}\FF_3 
    &\Leftrightarrow \forall f\in\HH_3,\  p_1f=p_2f \\
    &\Leftrightarrow p_1p_3 =p_2\\
    &\Leftrightarrow \HH_1 \bot_{\HH_2} \HH_3. 
    \qedhere
\end{align*}
\end{proof}

Let $(X,\mu)$ be a probability space and $\Aut(X,\mu)$ be the group of measure-preserving bijections of $(X,\mu)$, identified up to measure zero. It is equipped with the weak topology, defined as follows: $T_n\to T$ if for all measurable $A\subseteq X$, $\mu(T_n(A)\triangle T(A))\to 0$.  

A Boolean p.m.p.\ action $\alpha\colon G\to\Aut(X,\mu)$ of a topological group $G$ on $(X,\mu)$ is ergodic if $\FF_\emptyset$ is trivial (i.e.\ if $\forall Y\in\FF_\emptyset,\ \mu(Y)\in\{0,1\}$). The following lemma connects $G$-invariant functions with $\FF_\emptyset$-measurable functions. While often stated for Borel p.m.p.\ actions of locally compact groups, its proof readily adapts to Boolean p.m.p\ actions of topological groups.

\begin{lemma}\label{lem.invariant.functions}
    Let $G$ be a topological group, let $\alpha\colon G\to\Aut(X,\mu)$ be a Boolean p.m.p.\ action. A function $f\colon X\to\C$ is $\FF_\emptyset$-measurable if and only if for every $g\in G$, we have $f\circ g=f\ \mu$-almost everywhere.
\end{lemma}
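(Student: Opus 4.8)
The plan is to prove both implications by relating the level sets $f^{-1}(B)$ of $f$ to their membership in $\FF$, i.e.\ to their $G$-invariance. First I would fix the meaning of $f \circ g$ in the Boolean setting: the automorphism $\alpha(g)$ of the measure algebra extends canonically to a transformation of measurable functions, denoted $f \mapsto f \circ g$, characterized by $(f \circ g)^{-1}(B) = \alpha(g)\, f^{-1}(B)$ for every Borel $B \subseteq \C$ (equivalently $\mathbf 1_Y \circ g = \mathbf 1_{\alpha(g) Y}$). Since the statement quantifies over all $g \in G$, both $g$ and $g^{-1}$ occur, so the precise convention is immaterial. With this in hand, recall that $Y \in \FF$ means $\mu(\alpha(g) Y \triangle Y) = 0$ for every $g$.

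For the easy implication ($f$ invariant $\Rightarrow$ $f$ is $\FF$-measurable), I would fix $g \in G$ and use that $f \circ g = f$ holds off a single null set $N_g$. Then for every Borel $B$, the sets $(f \circ g)^{-1}(B) = \alpha(g) f^{-1}(B)$ and $f^{-1}(B)$ differ only within $N_g$, so $\mu(\alpha(g) f^{-1}(B) \triangle f^{-1}(B)) = 0$. As $g$ was arbitrary, every level set $f^{-1}(B)$ lies in $\FF$, which is exactly $\FF$-measurability of $f$.

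For the converse ($\FF$-measurable $\Rightarrow$ invariant), the subtlety is that $\FF$-measurability only gives, for each Borel $B$ separately, a null set on which $\alpha(g) f^{-1}(B)$ and $f^{-1}(B)$ agree, whereas I need a single null set governing the equality $f \circ g = f$ everywhere. The device is countability: fix once and for all a countable family $\{B_n\}_n$ of Borel subsets of $\C$ that separates points (e.g.\ the balls with rational centres and radii). For a fixed $g$, invariance of each $f^{-1}(B_n)$ gives a null set $N_n$ off which $x \in (f\circ g)^{-1}(B_n) \Leftrightarrow x \in f^{-1}(B_n)$; putting $N = \bigcup_n N_n$ (still null), for $x \notin N$ one has $f(g\cdot x) \in B_n \Leftrightarrow f(x) \in B_n$ for all $n$ simultaneously, and separation of points forces $f \circ g = f$ off $N$, i.e.\ $\mu$-almost everywhere.

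The main obstacle — really the only non-formal point — is this uniformity of the null set in the converse direction; everything else is bookkeeping with preimages and symmetric differences. A secondary point worth a sentence is the interpretation of $f \circ g$ purely at the level of the measure algebra, so that no point realization of $\alpha$ is needed; alternatively one may invoke a point realization of the Boolean action and read $f \circ g$ literally, which makes the identity $(f \circ g)^{-1}(B) = \alpha(g) f^{-1}(B)$ transparent.
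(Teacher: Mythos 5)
Your proof is correct, and it is exactly the standard argument the paper has in mind: the paper in fact omits the proof of this lemma entirely, remarking only that the classical proof for Borel actions of locally compact groups ``readily adapts'' to Boolean actions of topological groups. Your write-up supplies precisely that adaptation — the easy direction via level sets, and the converse via a countable point-separating family of Borel sets to obtain a single null set for each $g$ — and correctly identifies the uniformity of the null set as the only non-formal point.
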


As a consequence, we connect the two notions of dissociation -- for unitary representations and Boolean p.m.p.\ actions -- defined in the introduction. First, recall that a Boolean p.m.p.\ action $\alpha \colon G\to\Aut(X,\mu)$ of a topological group $G$ on a standard probability space $(X,\mu)$ naturally defines a unitary representation of $G$ on $\L^2(X,\mu)$, called the \emph{Koopman representation of $\alpha$}, as follows:
$$\forall f\in \L^2(X,\mu), \forall g \in G, \forall x \in X, \ (g\cdot f)(x) = f(\alpha(g^{-1})x).$$

\begin{lemma}\label{lem.koopman.dissociated}
    Let $G\leq\Sym(\Omega)$ be a closed permutation group and $\alpha \colon G\to\Aut(X,\mu)$ be a Boolean p.m.p.\ action. Then $\alpha$ is dissociated if and only if its Koopman representation is dissociated.
\end{lemma}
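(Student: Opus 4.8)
The plan is to identify, for the Koopman representation $\kappa$, the closed subspace $\HH_A$ of $G_A$-invariant vectors with the subspace $\L^2(X,\FF_A,\mu)$ of square-integrable $\FF_A$-measurable functions, and then to read off dissociation of $\kappa$ as a family of conditional-orthogonality statements that Lemma \ref{lem.equivalence.independence.orthogonality} converts into exactly the conditional-independence statements defining dissociation of $\alpha$.

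First I would fix a finite subset $A\subseteq\Omega$ and note that the pointwise stabilizer $G_A$ is itself a closed subgroup of $\Sym(\Omega)$, and that the restriction of $\alpha$ to $G_A$ is again a Boolean p.m.p.\ action. A vector $f\in\L^2(X,\mu)$ lies in $\HH_A$ precisely when $\kappa(g)f=f$ for all $g\in G_A$, that is, $f\circ g=f$ $\mu$-almost everywhere for every $g\in G_A$ (using that $G_A$ is a group to pass from $g^{-1}$ to $g$). By Lemma \ref{lem.invariant.functions} applied to $G_A$ in place of $G$, this is exactly the condition that $f$ be $\FF_A$-measurable. Hence $\HH_A=\L^2(X,\FF_A,\mu)$, and the orthogonal projection $p_A$ onto $\HH_A$ coincides with the conditional expectation $\E[\,\cdot\mid\FF_A]$.

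Next I would record the inclusions needed to set up the conditional framework. Since $A\cap B\subseteq A$ and $A\cap B\subseteq B$, the pointwise stabilizers satisfy $G_A\subseteq G_{A\cap B}$ and $G_B\subseteq G_{A\cap B}$; consequently every $G_{A\cap B}$-invariant set is both $G_A$- and $G_B$-invariant, which gives $\FF_{A\cap B}\subseteq\FF_A\cap\FF_B$ and, through the identification above, $\HH_{A\cap B}\subseteq\HH_A\cap\HH_B$. This is precisely the hypothesis under which conditional orthogonality and conditional independence are defined.

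Finally, for each pair of finite subsets $A,B\subseteq\Omega$ I would apply Lemma \ref{lem.equivalence.independence.orthogonality} with $(\FF_1,\FF_2,\FF_3)=(\FF_A,\FF_{A\cap B},\FF_B)$. It yields that $\FF_A$ and $\FF_B$ are independent conditionally on $\FF_{A\cap B}$ if and only if $\L^2(X,\FF_A,\mu)$ and $\L^2(X,\FF_B,\mu)$ are orthogonal conditionally on $\L^2(X,\FF_{A\cap B},\mu)$, i.e.\ $p_Ap_B=p_{A\cap B}$. Quantifying over all finite $A,B$, the left-hand conditions are exactly the definition of dissociation for $\alpha$, and the right-hand conditions are exactly the definition of dissociation for $\kappa$, so the two properties coincide. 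I do not expect a genuine obstacle here: the only point requiring real care is the identification $\HH_A=\L^2(X,\FF_A,\mu)$ in the first step, which rests on correctly invoking Lemma \ref{lem.invariant.functions} for the subgroup $G_A$ rather than for all of $G$; once that is in place, the statement is a direct translation through Lemma \ref{lem.equivalence.independence.orthogonality}.
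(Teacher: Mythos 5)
Your proposal is correct and follows essentially the same route as the paper: identify $\HH_A=\L^2(X,\FF_A,\mu)$ via Lemma \ref{lem.invariant.functions} (applied to the subgroup $G_A$) and then translate between conditional independence and conditional orthogonality using Lemma \ref{lem.equivalence.independence.orthogonality}. The paper's proof is just a terser version of the same argument.
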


\begin{proof}
    Let $\HH\coloneqq\L^2(X,\mu)$. For every finite subset $C\subseteq\Omega$, Lemma \ref{lem.invariant.functions} implies that $\HH_C=\L^2(X,\FF_C,\mu)$. Therefore, Lemma \ref{lem.equivalence.independence.orthogonality} shows that $\alpha$ is dissociated if and only if $\kappa$ is. 
\end{proof}

The following result is standard using Gaussian actions. For a proof in the context of Boolean p.m.p.\ actions of Polish groups, we refer to Section 8.2 of \cite{CarderiGiraudLeMaitre} and the references therein. 

\begin{theorem}\label{thm.Gaussian.action}
    Let $G$ be a Polish group and $\pi \colon G\to\UU(\HH)$ be a unitary representation. There exists a standard probability space $(X,\mu)$ and a Boolean p.m.p.\ action $G\to\Aut(X,\mu)$ whose (complex) Koopman representation contains $\pi$ as a subrepresentation.   
\end{theorem}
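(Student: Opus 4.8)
The plan is to use the classical Gaussian construction (the \emph{Gaussian functor}, or second quantization), which turns an orthogonal representation on a real Hilbert space into a Boolean p.m.p.\ action on an associated Gaussian space whose first Wiener chaos recovers the original representation. First I would reduce to the real case. Viewing the complex Hilbert space $\HH$ as a real Hilbert space $\HH_\R$ equipped with the inner product $\mathrm{Re}\langle\cdot,\cdot\rangle$, the representation $\pi$ becomes an orthogonal representation $\pi_\R\colon G\to O(\HH_\R)$, still strongly continuous. Then I would build the Gaussian probability space: choosing an orthonormal basis $(e_i)_{i\in\N}$ of $\HH_\R$, set $(X,\mu)=(\R^\N,\gamma^{\otimes\N})$ with $\gamma$ the standard Gaussian, so that the coordinate functions are i.i.d.\ standard Gaussians. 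Sending $e_i$ to the $i$-th coordinate and extending linearly and continuously produces an isometric embedding $\HH_\R\hookrightarrow \L^2_\R(X,\mu)$, $\xi\mapsto\hat\xi$, whose image is the \emph{first Wiener chaos} $\mathcal{H}_1$, consisting of centered Gaussian variables with $\E[\hat\xi\hat\eta]=\langle\xi,\eta\rangle$.

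Next I would invoke the functoriality of this construction. Every orthogonal transformation $T\in O(\HH_\R)$ preserves the joint law of the Gaussian family $(\hat\xi)_\xi$, hence induces a unique measure-preserving transformation $\tilde T\in\Aut(X,\mu)$ characterized by $\widehat{T\xi}=\hat\xi\circ\tilde T\inv$, and the assignment $T\mapsto\tilde T$ is a group homomorphism. Setting $\alpha(g)\coloneqq\widetilde{\pi_\R(g)}$ thus gives a Boolean action $\alpha\colon G\to\Aut(X,\mu)$, p.m.p.\ by construction, for which the Koopman operator acts on the first chaos exactly as $\pi_\R$, via $\kappa(g)\hat\xi=\widehat{\pi_\R(g)\xi}$. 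By the Wiener--Itô decomposition $\L^2_\R(X,\mu)=\bigoplus_{n\geq 0}\mathcal{H}_n$, so after complexifying, the first complex chaos is $\mathcal{H}_1\otimes_\R\C\cong(\HH_\R)_\C\cong\HH\oplus\overline{\HH}$ as complex $G$-representations; this is an invariant subspace of the complex Koopman representation $\kappa$ on which $\pi$ appears as a direct summand. Hence $\kappa$ contains $\pi$ as a subrepresentation.

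The main point to verify carefully is the \emph{continuity} of $\alpha$ for the weak topology on $\Aut(X,\mu)$, the real/complex bookkeeping being routine otherwise. Since the coordinate Gaussians generate the $\sigma$-algebra up to null sets, it suffices to check that $g\mapsto\kappa(g)f$ is continuous for $f$ in the dense subalgebra generated by $\mathcal{H}_1$; and on $\mathcal{H}_1$ the continuity of $g\mapsto\kappa(g)\hat\xi=\widehat{\pi_\R(g)\xi}$ is precisely the strong continuity of $\pi_\R$, which we have by assumption. I would then conclude, as in Section~8.2 of \cite{CarderiGiraudLeMaitre}, that this continuity of the Koopman representation upgrades to continuity of $\alpha$ as a map into $\Aut(X,\mu)$, which completes the argument.
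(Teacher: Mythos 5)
Your proof is correct and is precisely the standard Gaussian (second quantization) construction that the paper itself invokes: the paper gives no independent argument but simply refers to Section~8.2 of \cite{CarderiGiraudLeMaitre}, which carries out exactly the steps you describe (realification, first Wiener chaos, functoriality, complexification yielding $\pi\oplus\overline{\pi}$, and continuity via the Koopman representation). Nothing to add.
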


We are now ready to prove the following result. 

\begin{theorem}\label{thm.intro.equivalence.dissociation}
    Let $G$ be a closed permutation group. The following are equivalent. 
    \begin{enumerate}[label=(\roman*)]
        \item\label{item.dissociation.unirep} Every unitary representation of $G$ is dissociated.
        \item\label{item.dissociation.Booleanpmp} Every Boolean p.m.p.\ action of $G$ is dissociated. 
    \end{enumerate}
    If this holds, we say that $G$ is dissociated. 
\end{theorem}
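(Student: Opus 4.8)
The plan is to reduce the equivalence to the two tools already available: Lemma~\ref{lem.koopman.dissociated}, which says a Boolean p.m.p.\ action is dissociated exactly when its Koopman representation is, and Theorem~\ref{thm.Gaussian.action}, which realizes any unitary representation as a subrepresentation of some Koopman representation. Once these are in place, the only genuinely new ingredient needed is that dissociation passes to subrepresentations, and I would isolate this as a separate claim.

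For \ref{item.dissociation.unirep} $\Rightarrow$ \ref{item.dissociation.Booleanpmp} I would argue directly: given a Boolean p.m.p.\ action $\alpha\colon G\to\Aut(X,\mu)$, its Koopman representation $\kappa$ on $\L^2(X,\mu)$ is a unitary representation of $G$, hence dissociated by hypothesis, so $\alpha$ is dissociated by Lemma~\ref{lem.koopman.dissociated}. For the converse \ref{item.dissociation.Booleanpmp} $\Rightarrow$ \ref{item.dissociation.unirep}, I would take an arbitrary unitary representation $\pi\colon G\to\UU(\HH)$ and use Theorem~\ref{thm.Gaussian.action} to embed it as a subrepresentation of the Koopman representation $\kappa$ of some Boolean p.m.p.\ action $\alpha$. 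By hypothesis $\alpha$ is dissociated, so $\kappa$ is dissociated by Lemma~\ref{lem.koopman.dissociated}, and it remains only to transfer dissociation from $\kappa$ down to $\pi$.

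The key claim I would establish is therefore that every subrepresentation of a dissociated representation is dissociated. Let $\kappa$ act on $\HH$ with fixed-point spaces $\HH_A$ and projections $p_A$, let $\HH'\subseteq\HH$ be the closed $G$-invariant subspace carrying $\pi$, and let $q$ be the orthogonal projection onto $\HH'$. Since $\HH'$ is $G$-invariant, $q$ commutes with every $\kappa(g)$; I would then observe that $q$ preserves each $\HH_A$ — for $v\in\HH_A$ and $g\in G_A$ one has $\kappa(g)qv=q\kappa(g)v=qv$, so $qv\in\HH_A$ — and, being self-adjoint, $q$ also preserves $\HH_A^\bot$, hence commutes with $p_A$. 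The $G_A$-fixed vectors of $\pi$ are exactly $\HH'\cap\HH_A$, and their projection is the restriction $p_A|_{\HH'}$: for $v\in\HH'$ one has $p_Av=p_Aqv=qp_Av\in\HH'\cap\HH_A$, which is then the closest point of $\HH'\cap\HH_A$ to $v$. Writing $p'_A\coloneqq p_A|_{\HH'}$ and using that $p_Bv\in\HH'$ for $v\in\HH'$, the dissociation identity restricts cleanly:
\[
p'_Ap'_Bv=p_Ap_Bv=p_{A\cap B}v=p'_{A\cap B}v\qquad(v\in\HH'),
\]
so $\pi$ is dissociated.

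The one delicate step is the commutation of $q$ with the projections $p_A$; I expect this to be the main (and essentially the only) obstacle, since it is precisely what lets the identities $p_Ap_B=p_{A\cap B}$ survive restriction to $\HH'$. Everything else is a bookkeeping application of the two cited results.
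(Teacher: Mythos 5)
Your proposal is correct and follows exactly the paper's argument: one direction is immediate from Lemma~\ref{lem.koopman.dissociated}, and the other combines Theorem~\ref{thm.Gaussian.action} with Lemma~\ref{lem.koopman.dissociated} and the stability of dissociation under passing to subrepresentations. The only difference is that you prove that last stability fact in full (correctly — the commutation of $q$ with $p_A$ is exactly the right point), whereas the paper simply cites it from \cite[Lem.~3.4]{BJJ24}.
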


\begin{proof}
    Assume that every unitary representation of $G$ is dissociated. Let $\alpha \colon G\to\Aut(X,\mu)$ be a Boolean p.m.p.\ action. By assumption, its Koopman representation is dissociated. Therefore, $\alpha$ is dissociated by Lemma \ref{lem.koopman.dissociated}

    Assume that every Boolean p.m.p.\ action of $G$ is dissociated. Let $\pi \colon G\to\UU(\HH)$ be a unitary representation. By Theorem \ref{thm.Gaussian.action}, there exists a Boolean p.m.p.\ action $\alpha\colon G\to\Aut(X,\mu)$ whose Koopman representation $\kappa\colon G\to\UU(\L^2(X,\mu))$ contains $\pi$ as a subrepresentation. Since $\alpha$ is dissociated, so is $\kappa$ by Lemma \ref{lem.koopman.dissociated}. But dissociation passes to subrepresentations \cite[Lem.~3.4]{BJJ24} so $\pi$ is also dissociated. 
\end{proof}

\subsection{Lattice of open subgroups and topological simplicity}

Let $G\leq\Sym(\Omega)$ be a closed permutation group. We say that $G$ \mkbibemph{has no algebraicity} if for every finite subset $A\subseteq\Omega$, the orbits of the pointwise stabilizer $G_A$ on $\Omega\setminus A$ are all infinite. We say that $G$ \emph{weakly eliminates imaginaries} if for every open subgroup $V\leq G$, there exists a finite subset $A\subseteq\Omega$ such that $G_A\leq V$ and $[V:G_A]<+\infty$. The combination of these two properties leads to a precise understanding of the lattice of open subgroups, as explained in the following lemma. 

\begin{lemma}\label{lem.caracterisation.noAlg.WEI}
    Let $G$ be a closed subgroup of $\Sym(\Omega)$. Assume that $G$ acts without fixed point on $\Omega$. Then the following are equivalent. 
\begin{enumerate}[label=(\roman*)]
    \item\label{item.noAlgWEI.1} $G$ has no algebraicity and weakly eliminates imaginaries.
    \item\label{item.noAlgWEI.2} For all finite subsets $A,B\subseteq\Omega$, the subgroup $\langle G_A,G_B\rangle$ generated by $G_A$ and $G_B$ is equal to $G_{A\cap B}$. 
    \item\label{item.noAlgWEI.3} For every open subgroup $V\leq G$, there exists a unique finite subset $A\subseteq\Omega$ such that $G_A\leq V\leq G_{\{A\}}$. 
\end{enumerate}
\end{lemma}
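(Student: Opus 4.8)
The plan is to establish all three equivalences through condition (1), by proving the two easy implications $(1)\Rightarrow(3)$ and $(1)\Rightarrow(2)$ together with the converses $(3)\Rightarrow(1)$ and $(2)\Rightarrow(1)$; only the last will require real work. The organizing object throughout is, for an open subgroup $V\le G$, the set
$$A_V\coloneqq\{\omega\in\Omega:\ \text{the $V$-orbit of }\omega\text{ is finite}\}.$$
Two things are automatic: $A_V$ is $V$-invariant, so $V\le G_{\{A_V\}}$ as soon as $A_V$ is finite; and $\langle G_A,G_B\rangle\le G_{A\cap B}$ for all finite $A,B$, since $G_A,G_B\le G_{A\cap B}$. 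The whole point of the lemma is to upgrade these trivial inclusions to equalities, and the engine for that is the interplay between no algebraicity (which controls $A_V$ from below) and weak elimination of imaginaries (which controls $V$ from below).

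For $(1)\Rightarrow(3)$, given an open $V$, weak elimination provides a finite $A$ with $G_A\le V$ of finite index; no algebraicity forces $A_V=A$, because a point outside $A$ has infinite $G_A$-orbit, hence infinite $V$-orbit. Thus $A_V$ is finite, $G_{A_V}=G_A\le V\le G_{\{A_V\}}$, and the same two properties give uniqueness of the sandwiching set. For $(3)\Rightarrow(1)$, weak elimination is immediate since $[V:G_A]\le[G_{\{A\}}:G_A]\le|A|!$, and no algebraicity follows from uniqueness: if some $G_A$ had a nonempty finite orbit $O\subseteq\Omega\setminus A$, then both $A$ and $A\cup O$ would satisfy $G_{\bullet}\le G_A\le G_{\{\bullet\}}$, a contradiction. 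The implication $(1)\Rightarrow(2)$ is then quick: for $V=\langle G_A,G_B\rangle$ (open, as it contains $G_A$) no algebraicity gives $A_V=A\cap B$, and the already-proven $(1)\Rightarrow(3)$ yields $G_{A\cap B}=G_{A_V}\le V$, which with $V\le G_{A\cap B}$ gives equality.

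The heart of the proof is $(2)\Rightarrow(1)$, where I must produce both no algebraicity and weak elimination from the amalgamation identity alone. For no algebraicity, suppose $G_A$ has a finite orbit $O\subseteq\Omega\setminus A$. The key observation is that condition (2), restricted to sets $A\cup S$ with $S\subseteq O$, descends to the finite transitive group $\overline G\le\Sym(O)$ induced by $G_A$ on $O$, giving $\langle\overline G_{(S)},\overline G_{(T)}\rangle=\overline G_{(S\cap T)}$ for all $S,T\subseteq O$, where $\overline G_{(S)}$ denotes the pointwise stabiliser of $S$ in $\overline G$. A short descent argument shows no nontrivial transitive finite group can satisfy this: the pointwise stabiliser of all but one point is trivial, so (2) forces the pointwise stabiliser of all but two points to be trivial; then, using transitivity to translate a minimal trivial-stabiliser set and intersect it with itself, one strictly shrinks such a set, forcing $\overline G=\overline G_{(\emptyset)}=1$ and hence $|O|=1$. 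Ruling out the remaining singleton case, namely $G_A$ fixing a point $a\notin A$, is where I expect the main difficulty: here I would argue by minimality on $|A|$, using fixed-point-freeness of $G$ to find a translate $A'=g(A)$ with $g(a)=a$ and $c\notin A'$ for some chosen $c\in A$, so that by (2) the group $G_{A\cap A'}=\langle G_A,G_{A'}\rangle$ still fixes $a$ while $|A\cap A'|<|A|$, contradicting minimality; the delicate point is guaranteeing that such a translate exists, i.e.\ that the $G_a$-orbit of $c$ leaves $A$, which is exactly the sort of statement the size-$\ge 2$ case is meant to feed back in. Finally, for weak elimination one checks, using no algebraicity, that $A_V$ is finite (it is contained in any finite $F$ with $G_F\le V$) and that $[V:G_{A_V}]<\infty$ reduces to the inclusion $G_{A_V}\le V$; I would obtain this by absorbing the points of $F\setminus A_V$ one at a time, each having infinite $V$-orbit, by conjugating $G_F$ by a suitable $v\in V$ pushing such a point off $F$ and then merging $G_F$ with its conjugate via (2). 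This absorption step, together with the singleton-orbit case, is the principal obstacle; everything else is bookkeeping with the invariant set $A_V$.
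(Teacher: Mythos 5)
Your equivalence $(1)\Leftrightarrow(3)$ is correct and is essentially the paper's own argument: sandwiching an open $V$ via the set of points with finite $V$-orbit, and the $A\cup O$ trick for the converse. For $(1)\Leftrightarrow(2)$ the paper gives no proof at all — it cites Lemma 3.6 of \cite{jahel2023stabilizers} — so there you are doing genuinely more work than the text. Your direction $(1)\Rightarrow(2)$ is correct. In $(2)\Rightarrow(1)$, the descent to a finite transitive permutation group does kill finite orbits of size at least two, and the ``absorption'' step for weak elimination is not actually an obstacle: starting from $G_F\le V$, a point $x\in F\setminus A_V$ has infinite $V$-orbit, so some $v\in V$ satisfies $x\notin v(F)$, and (2) gives $G_{F\cap v(F)}=\langle G_F,\, vG_Fv^{-1}\rangle\le V$ with $x$ removed; iterating lands on some $F^*\subseteq A_V$ with $G_{F^*}\le V$, whence $G_{A_V}\le G_{F^*}\le V$ and $[V:G_{A_V}]\le\lvert A_V\rvert!$ because $V\le G_{\{A_V\}}$.

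The one genuine gap is the singleton case of $(2)\Rightarrow$ (no algebraicity), which you flag as the principal difficulty — but you have made it far harder than it is, and the minimality/translation scheme is unnecessary. If $G_A$ fixes some $a\notin A$, then $G_A\le G_a$, so $\langle G_A,G_a\rangle=G_a$; on the other hand (2) applied to the finite sets $A$ and $\{a\}$ gives $\langle G_A,G_a\rangle=G_{A\cap\{a\}}=G_\emptyset=G$. Hence $G_a=G$, i.e.\ $a$ is a fixed point of $G$, contradicting the standing hypothesis (this is the only place fixed-point-freeness enters). If you prefer to keep your route, the case you could not close — every $g\in G_a$ stabilizing $A$ setwise — is also salvageable: each $c\in A$ then has finite $G_a$-orbit, hence is fixed by $G_a$ by your size-$\ge 2$ result applied to the set $\{a\}$, and $\langle G_a,G_c\rangle=G_\emptyset=G$ together with $G_a\le G_c$ forces $G_c=G$, again a contradiction. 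But the one-line argument makes the whole case split, and the worry about translates of $A$, evaporate.
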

\begin{proof}
    The equivalence between \ref{item.noAlgWEI.1} and \ref{item.noAlgWEI.2} is Lemma 3.5 of \cite{jahel2023stabilizers} while \ref{item.noAlgWEI.3} is simply an efficient reformulation of the above definitions. Indeed, assume $G$ satisfies \ref{item.noAlgWEI.1} and let $V\leq G$ be an open subgroup. By weak elimination of imaginaries, there exists $A\subseteq \Omega$ finite such that $G_A \leq V$ with finite index. Necessarily, every element of $V\cdot A$ has a finite orbit under the action of $G_A$. Since $G$ has no algebraicity, $V\cdot A \subseteq A$ i.e.\ $V\leq \Nm G A$. 
    
    Assume now $B$ is another finite subset of $\Omega$ such that $G_B \leq V \leq \Nm G B$. Then $G_A\leq V \leq \Nm G B$ and $G_A$ stabilizes $B$. In particular, every element of $B$ has a finite orbit under the action of $G_A$. By the no algebraicity hypothesis, $B \subseteq A$. The reverse inclusion also holds by symmetry.

    Conversely, assume \ref{item.noAlgWEI.3}. Clearly, $G$ has weak elimination of imaginaries. Now let $A \subseteq \Omega$ be finite and assume $G_A \cdot a$ is finite for some $a \in \Omega$. Then $B \coloneq A \cup G_A \cdot a$ is finite and stable under the action of $G_A$. Thus:
    $$G_B \leq G_A \leq \Nm G B.$$
    But we obviously also have $G_A \leq G_A \leq \Nm G A$. By uniqueness in \ref{item.noAlgWEI.3}, $A=B$ i.e.\ $a\in A$ and $G$ has no algebraicity.
\end{proof}


In fact, our definition of dissociation is tailored for permutation groups that have no algebraicity and weakly eliminate imaginaries. In particular, we have the following lemma, which appears as Remark 3.2 of \cite{BJJ24}.

\begin{lemma}\label{lem.dissociated.noAlg.WEI}
    Let $G\leq\Sym(\Omega)$ be a closed permutation group. Assume that $G$ acts without fixed points on $\Omega$. If $G$ is dissociated, then $G$ has no algebraicity and eliminates weakly imaginaries. 
\end{lemma}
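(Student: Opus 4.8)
The plan is to witness the failure of each conclusion through the dissociation of a well-chosen permutation representation on an $\ell^2$-space. I would establish no algebraicity and weak elimination of imaginaries separately, using the representation on $\ell^2(\Omega)$ for the first and the quasi-regular representation on $\ell^2(G/V)$ for the second. Let me point out in advance that the no fixed point hypothesis is only needed for no algebraicity.

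For \emph{no algebraicity}, I argue by contraposition. Suppose some pointwise stabilizer $G_A$, with $A\subseteq\Omega$ finite, has a finite orbit $O\coloneqq G_A\cdot a$ for some $a\in\Omega\setminus A$. First I would reduce to the case where $a$ is fixed by a pointwise stabilizer: replacing $A$ by $A'\coloneqq A\cup(O\setminus\{a\})$, the group $G_{A'}\leq G_A$ still permutes the $G_A$-invariant set $O$ while fixing each of its points except possibly $a$, hence it fixes $a$ too, and $a\notin A'$. Now take the permutation representation of $G$ on $\HH\coloneqq\ell^2(\Omega)$. Since $a$ is fixed by $G_{A'}$ and by $G_{\{a\}}$, the point mass $\delta_a$ lies in $\HH_{A'}\cap\HH_{\{a\}}$. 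As $A'\cap\{a\}=\emptyset$, dissociation yields $\delta_a=p_{A'}p_{\{a\}}\delta_a=p_\emptyset\delta_a$, so $\delta_a$ is a $G$-invariant vector of $\ell^2(\Omega)$; this forces $a$ to be a global fixed point of $G$, contradicting the hypothesis.

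For \emph{weak elimination of imaginaries}, let $V\leq G$ be open, so that $G_F\leq V$ for some finite $F$. I would pass to the quasi-regular representation of $G$ on $\HH\coloneqq\ell^2(G/V)$ and consider the cyclic vector $\xi_0\coloneqq\delta_{eV}$, whose stabilizer is exactly $V$; the guiding observation is that for finite $C$ one has $\xi_0\in\HH_C$ if and only if $G_C\leq V$. The decisive use of dissociation is to show that the family $\mathcal C\coloneqq\{C\subseteq\Omega\text{ finite}:G_C\leq V\}$ is stable under intersection: if $\xi_0\in\HH_C\cap\HH_{C'}$ then $p_{C\cap C'}\xi_0=p_Cp_{C'}\xi_0=\xi_0$, whence $\xi_0\in\HH_{C\cap C'}$. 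Since $F\in\mathcal C$, the sets $C\cap F$ for $C\in\mathcal C$ form a finite family of subsets of $F$ stable under intersection, so $\mathcal C$ has a least element $A$, and in particular $G_A\leq V$. To bound the index I would then show $V\leq\Nm G A$: for $v\in V$ one has $vG_Av^{-1}=G_{v(A)}\leq vVv^{-1}=V$, so $v(A)\in\mathcal C$, and minimality of $A$ together with $|v(A)|=|A|$ forces $v(A)=A$. Thus $G_A\leq V\leq\Nm G A$ and $[V:G_A]\leq|A|!<\infty$, which is exactly weak elimination of imaginaries.

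The heart of the matter — and the only genuine use of dissociation in the second half — is the closure of $\mathcal C$ under intersection; the rest is bookkeeping about orbits and indices. For no algebraicity, the one subtlety is the reduction to an honest fixed point, after which dissociation closes the argument in a single line, the no fixed point hypothesis being exactly what turns the resulting $G$-invariant point mass into a contradiction.
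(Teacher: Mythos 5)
Your proof is correct, and it is a genuine contribution here: the paper does not actually prove this lemma but defers to \cite[Rem.~3.2]{BJJ24}, so yours is the only self-contained argument on the table. Both halves check out. For no algebraicity, the reduction $A'=A\cup(O\setminus\{a\})$ correctly turns a finite orbit into an honest fixed point of $G_{A'}$ (an injection permuting $O$ and fixing $O\setminus\{a\}$ must fix $a$), and applying $p_{A'}p_{\{a\}}=p_{\emptyset}$ to $\delta_a\in\ell^2(\Omega)$ does produce a $G$-invariant point mass, which the no-fixed-point hypothesis forbids. For weak elimination of imaginaries, the characterization $\xi_0\in\HH_C\Leftrightarrow G_C\leq V$ in $\ell^2(G/V)$ is right, the intersection-stability of $\mathcal C$ is exactly where dissociation enters, and the conjugation argument $v(A)\in\mathcal C$ plus minimality gives $V\leq \Nm G A$, hence finite index. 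You are also right that the fixed-point hypothesis is only used in the first half. The one remark worth making is that there is a slightly more economical route through machinery already in the paper: applying dissociation to the single quasi-regular representation $\ell^2(G/\langle G_A,G_B\rangle)$ (the subgroup is open, hence this makes sense) shows $G_{A\cap B}\leq\langle G_A,G_B\rangle$ for all finite $A,B$, which is condition \ref{item.noAlgWEI.2} of Lemma \ref{lem.caracterisation.noAlg.WEI}, and that lemma then delivers both conclusions at once. Your version trades that indirection for two direct, concrete computations; both are fine.
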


The proof of the following result was explained to us by David Evans (personal communication).

\begin{proposition}
    Let $G$ be a transitive, closed subgroup of $\Sym(\Omega)$. If $G$ has no algebraicity and weakly eliminates imaginaries, then $G$ is topologically simple.
\end{proposition}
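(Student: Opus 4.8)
The plan is to show that any nontrivial closed normal subgroup $N\trianglelefteq G$ must be all of $G$. First I would use the characterization from Lemma \ref{lem.caracterisation.noAlg.WEI}, which tells us precisely what the open subgroups of $G$ look like: every open subgroup $V$ satisfies $G_A\leq V\leq \Nm G A$ for a unique finite $A$. In particular, the pointwise stabilizers $G_A$ (for $A\subseteq\Omega$ finite) form a neighborhood basis of the identity consisting of open subgroups, and condition \ref{item.noAlgWEI.2} gives the crucial generation property $\langle G_A, G_B\rangle = G_{A\cap B}$.

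The key step is to analyze $N\cap G_A$ for finite $A$. Since $N$ is closed and nontrivial, and since the $G_A$ form a neighborhood basis of $1$, there is some finite $A$ for which $N\not\leq G_A$ — indeed, taking $A=\emptyset$, since $G$ acts transitively (hence without fixed points), nontriviality of $N$ together with closedness should force $N$ to move points, so $N\not\leq G_\emptyset=G$ is vacuous; the real content is that $N$ contains an element $n$ moving some point, and by transitivity we can conjugate to spread this movement around. I would argue that for every finite $A$, the subgroup $N$ together with $G_A$ generates something large: concretely, I would aim to show $\langle N, G_A\rangle = G$ for all finite $A$, or more directly that $N\cdot G_A = G$. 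The mechanism is normality: if $n\in N$ moves a point $a\in A$ to a point outside $A$, then conjugating $G_A$-elements by $n$ and using that $N$ is normal lets us build elements of $N$ that act like transpositions-at-infinity relative to $A$, and combined with the no-algebraicity hypothesis (orbits of $G_A$ on $\Omega\setminus A$ are infinite) this produces enough of $G$ inside $N\cdot G_A$.

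More precisely, the argument I would carry out is this. Fix a nontrivial $n\in N$ and a point $a$ with $n(a)=b\neq a$. By transitivity and normality, for any finite $A$ I can find conjugates of $n$ lying in $N$ that, restricted to the action on $\Omega\setminus A$, realize arbitrary finitely-supported permutations of a $G_A$-orbit. The no-algebraicity hypothesis guarantees these orbits are infinite, so I get, inside the closure of $N$ (which equals $N$), the full pointwise stabilizer $G_A$ for arbitrarily large $A$ — or at least enough to generate it via the relation $\langle G_A, G_B\rangle = G_{A\cap B}$. Pushing $A$ down to $A\cap B=\emptyset$ then yields $G_\emptyset = G\leq N$, whence $N=G$. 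The same generation relation, run for singletons $A=\{a\}$, $B=\{b\}$ with distinct $a,b$, gives $\langle G_{\{a\}}, G_{\{b\}}\rangle = G_\emptyset = G$, so it suffices to show $N$ contains a conjugate of $G_{\{a\}}$ for each $a$, or that $N$ surjects onto the relevant quotients.

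The hard part will be the passage from ``$N$ moves a point'' to ``$N$ contains a full pointwise stabilizer $G_A$'', i.e.\ upgrading a single nontrivial element to a large open subgroup inside $N$. This is where normality, transitivity, and no-algebraicity must be combined carefully: I expect to take an element $n\in N$ with $n(a)=b\neq a$, consider the commutator $[n, g] = n g n\inv g\inv$ for $g\in G_A$ with $A\ni a$ chosen so that $b\notin A$, observe that $[n,g]\in N$ by normality, and show these commutators generate a pointwise stabilizer. The infinitude of $G_A$-orbits (no algebraicity) is exactly what prevents these commutators from collapsing to the identity and what lets the support of elements of $N$ range over an infinite set, so that the closure $N$ swallows a whole open subgroup. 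Once an open subgroup is inside the closed normal $N$, weak elimination of imaginaries pins it between some $G_A$ and $\Nm G A$, and the generation relation \ref{item.noAlgWEI.2} finishes by driving $A$ to the empty set.
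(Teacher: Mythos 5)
There is a genuine gap at the step you yourself flag as ``the hard part'': the passage from ``$N$ contains a nontrivial element'' to ``$N$ contains a full pointwise stabilizer $G_A$'' is never actually carried out, and the mechanism you sketch does not obviously work. Taking $n\in N$ with $n(a)=b\neq a$ and forming commutators $[n,g]=ngn\inv g\inv$ for $g\in G_A$ certainly lands you in $N$, and each such commutator lies in $\langle G_{n(A)},G_A\rangle=G_{n(A)\cap A}$; but there is no argument that the closed subgroup these commutators generate contains \emph{any} pointwise stabilizer $G_C$. No algebraicity prevents the commutators from collapsing to the identity, as you say, but ``not collapsing'' is very far from ``swallowing an open subgroup'': a priori the commutators could generate a small (e.g.\ discrete, or at least non-open) subgroup of $G_{n(A)\cap A}$. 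The final steps of your plan are fine --- once $G_A\leq N$ for some finite $A$, normality gives $G_{gA}\leq N$ for all $g$, Neumann's lemma produces $g$ with $A\cap gA=\emptyset$, and item \ref{item.noAlgWEI.2} of Lemma \ref{lem.caracterisation.noAlg.WEI} yields $N\geq G_\emptyset=G$ --- but the entire difficulty of the proposition is concentrated in the step you leave as a heuristic. (Also, the aside ``$N\not\leq G_\emptyset=G$ is vacuous'' is a slip: $G_\emptyset=G$, so that containment always holds; what you actually use is just that a nontrivial permutation moves a point.)

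The paper avoids this difficulty entirely by bounding a proper closed normal subgroup $H$ from \emph{above} rather than from below. Since $H$ is closed and proper, it is not dense, so it fails to act transitively on some $G$-orbit $\Delta\subseteq\Omega^n$; the $H$-orbit equivalence relation on $\Delta$ is $G$-equivariant, and the stabilizer $V$ of one class is an open subgroup \emph{containing} $H$. Lemma \ref{lem.caracterisation.noAlg.WEI} pins $V$ between $G_A$ and $\Nm G A$ for a unique nonempty finite $A$, so $H\leq \Nm G A$, and normality forces $H\leq\bigcap_{g\in G}\Nm G {g(A)}$, which a Neumann's lemma argument shows fixes a point and hence (by normality and transitivity) is trivial. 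If you want to keep your bottom-up strategy, you would need to supply a genuine proof that a nontrivial closed normal subgroup contains an open subgroup; as written, the argument is not complete.
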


\begin{proof}
Let $H$ be a closed, normal, proper subgroup of $G$. Expressing that $H$ is proper and closed, and therefore not dense in $G$, we get an integer $n\geq 1$ and a $G$-orbit $\Delta \subseteq \Omega^n$ such that $H$ is not transitive on $\Delta$. As $H\trianglelefteq G$, the $H$-orbits form a $G$-equivariant equivalence relation $\sim$ on $\Delta$. Fix $\x\in\Delta$ and write $[\x]_\sim$ for the equivalence class $H\cdot \x$ of $\x$. It is clear that $H$ is contained in the subgroup $V\coloneqq \{g\in G\colon g([\x]_\sim)=[\x]_\sim\}$ which is open since it contains the pointwise stabilizer $G_{\x}$. By Lemma \ref{lem.caracterisation.noAlg.WEI}, there exists a unique finite subset $A\subseteq\Omega$ such that $G_A \leq V\leq G_{\{A\}}$. Necessarily, $A\neq \emptyset$ otherwise $[\x]_\sim = \Delta$, a contradiction. Now, $H\leq G_{\{A\}}$ and, by normality, we get:
$$H\leq K\coloneqq\bigcap_{g\in G}G_{\{g(A)\}}.$$
Let us prove that $K$ is trivial. Since $K$ is normal in $G$ and $G$ acts transitively on $\Omega$, it suffices to prove that $K$ has a fixed point on $\Omega$. Fix any $a \in A$. Since the $G_a$-orbits on $\Omega\setminus\{a\}$ are infinite, there exists by Neumann's lemma \cite[Thm.~1]{Birch_1976} an element $g_0\in G_a$ such that $A\cap g_0A=\{a\}$. Thus $K\leq G_{\{A\}}\cap G_{\{g_0(A)\}}\leq G_a$, which concludes the proof.   
\end{proof}

\begin{corollary}
    Let $G\leq\Sym(\Omega)$ be a transitive, closed permutation group. If $G$ is dissociated, then $G$ is topologically simple.
\end{corollary}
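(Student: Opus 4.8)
The plan is to recognize that this corollary is an immediate consequence of the two results immediately preceding it, so the main work is merely to verify that the hypotheses line up. The Proposition establishes topological simplicity under the combinatorial hypotheses ``no algebraicity'' and ``weakly eliminates imaginaries,'' while Lemma \ref{lem.dissociated.noAlg.WEI} tells us precisely that dissociation implies exactly these two properties, provided $G$ acts without fixed points. So the entire strategy reduces to chaining these implications.

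First I would observe that transitivity supplies the missing fixed-point hypothesis of Lemma \ref{lem.dissociated.noAlg.WEI}. Indeed, since $\Omega$ is countably infinite it has more than one point, and a group acting transitively on a set of cardinality at least $2$ can have no fixed point (a fixed point would force the orbit of any other element to be a proper subset, contradicting transitivity). Hence $G$ acts without fixed points on $\Omega$, and Lemma \ref{lem.dissociated.noAlg.WEI} applies to the dissociated group $G$, yielding that $G$ has no algebraicity and weakly eliminates imaginaries.

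Second, I would simply feed this conclusion into the Proposition. That statement requires $G$ to be transitive, closed, with no algebraicity and weak elimination of imaginaries -- all of which we now have -- and returns that $G$ is topologically simple, which is exactly the desired conclusion.

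I do not expect any genuine obstacle here: the corollary is a formal combination of the Proposition and Lemma \ref{lem.dissociated.noAlg.WEI}, and the only point requiring even a word of justification is the elementary observation that transitivity on an infinite set rules out fixed points, so that the fixed-point hypothesis of Lemma \ref{lem.dissociated.noAlg.WEI} is automatically met.
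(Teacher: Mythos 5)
Your proposal is correct and follows exactly the route the paper intends: the corollary is stated without proof as an immediate consequence of the preceding Proposition together with Lemma \ref{lem.dissociated.noAlg.WEI}, with transitivity on the infinite set $\Omega$ supplying the no-fixed-point hypothesis. No issues.
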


\section{Unitary representations of dissociated groups}

\subsection{Classification of unitary representations}

In order to state the classification result of dissociated unitary representations that we obtained in \cite{BJJ24}, let us first recall the notion of induced representations. Let $G$ be a separable topological group and let $H\leq G$ be an open subgroup. Then $G/H$ is at most countable by separability of $G$ and we endow it with the counting measure. Let $\sigma \colon H\to\UU(\KK)$ be a unitary representation of $H$. Let $\EE$ be the space of maps $f\colon G\to\KK$ such that for every $g\in G$ and $h\in H$ we have $f(gh)=\sigma(h\inv)f(g)$. Notice that for all $f,f_1,f_2 \in \EE$, the maps $g\mapsto \left<f_1(g),f_2(g) \right>$ and $g\mapsto\lVert f(g)\rVert$ are constant on each left $H$-coset. Denote by $\left<f_1,f_2\right>(q)$ and $\lVert f(q)\rVert$ their respective value on the coset $q\in G/H$. 
Let $\HH$ be the Hilbert space of all $f\in \EE$ such that $\sum_{q\in G/H}\lVert f(q)\rVert^2 <+\infty$
with inner product given by \[\left<f_1,f_2\right> = \sum_{q\in G/H}\left<f_1,f_2\right>(q).\] 
The \textit{induced representation} $\pi\coloneqq\mathrm{Ind}_H^G(\sigma)$ is the representation of $G$ on $\HH$ defined by 
\[\pi(g)f \colon x\mapsto f(g\inv x)\quad\text{ for all }g\in G, f\in\HH.\]
Since $H$ is open, $\Ind_H^G(\sigma) \colon G\to\UU(\HH)$ is indeed continuous. 

Induction is a powerful technique whose importance was promoted by Mackey in his seminal work on representation theory of locally compact groups. It turns out that dissociation can be phrased in terms of systems of imprimitivity, a key notion in Mackey's theory. In what follows, if $\HH$ is a Hilbert space, $\BB(\HH)$ will denote the space of bounded operators on $\HH$. Let $G$ be a topological group, $(X,\AA)$ be a standard Borel space and $G\times X\to X$ be a Borel action. A \emph{system of imprimitivity} is a couple $(\pi,p)$, where $\pi\colon G\to\UU(\HH)$ is a unitary representation and $p\colon\AA\to\BB(\HH)$ is a projection-valued measure (see Section 2.5 of \cite{Mackey}) such that for every $A\in\AA$ and $g\in G$, 
\begin{equation}\label{eq.equivariance}
\pi(g)p(A)\pi(g\inv)=p(gA).
\end{equation}

For a closed permutation group $G\leq\Sym(\Omega)$, there is a natural Borel action, that of $G$ on $\Omega$. Moreover, given a unitary representation $\pi  \colon G\to\UU(\HH)$, there is a natural projection-valued measure $p_\pi\colon\PP(\Omega)\to\BB(\HH)$, by setting $p_\pi(A) = p_A$. Notice that equation \eqref{eq.equivariance} always holds in that case. Moreover, dissociation easily rephrases as follows. 

\begin{fact}
    A unitary representation $\pi \colon G\to\UU(\HH)$ with no non-zero invariant vector of a closed permutation group $G\leq\Sym(\Omega)$ is dissociated if and only if $(\pi, p_\pi)$ is a system of imprimitivity. 
\end{fact}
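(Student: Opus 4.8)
The plan is to unwind both definitions against the axioms of a projection-valued measure. Since the equivariance relation \eqref{eq.equivariance} holds automatically for $p_\pi$, as already noted, the couple $(\pi,p_\pi)$ is a system of imprimitivity exactly when $p_\pi$ is a projection-valued measure on $(\Omega,\PP(\Omega))$; so the whole statement reduces to the equivalence ``$\pi$ dissociated $\iff$ $p_\pi$ a projection-valued measure''. One implication is immediate and requires no hypothesis beyond the definitions: if $p_\pi$ is a projection-valued measure it is in particular multiplicative, whence $p_Ap_B=p_\pi(A)p_\pi(B)=p_\pi(A\cap B)=p_{A\cap B}$ for all $A,B\subseteq\Omega$, which is precisely dissociation.

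For the converse I would assume $\pi$ dissociated and check the defining properties of $p_\pi$ one by one. By construction each $p_\pi(A)=p_A$ is an orthogonal projection, and the map $A\mapsto p_A$ is monotone since $A\subseteq B$ implies $\HH_A\subseteq\HH_B$. The normalization $p_\pi(\emptyset)=0$ is exactly the place where the hypothesis \emph{without invariant vector} is used: $\HH_\emptyset$ is the space of vectors invariant under $G_\emptyset=G$, hence $\{0\}$. Finally the multiplicativity axiom $p_\pi(A\cap B)=p_\pi(A)p_\pi(B)$ is nothing but the identity $p_Ap_B=p_{A\cap B}$, which the excerpt records as being equivalent to dissociation for all subsets, finite or not. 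Up to the normalization $p_\pi(\Omega)=I$ and the additivity axiom, dissociation is thus a verbatim restatement of the projection-valued measure condition.

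The one genuinely analytic point — and the step I expect to be the main obstacle — is the normalization $p_\pi(\Omega)=I$, i.e.\ $\HH_\Omega=\overline{\bigcup\{\HH_B:B\subseteq\Omega\text{ finite}\}}=\HH$. I would derive this from the strong-operator continuity of $\pi$ by a mean-ergodic argument, independently of dissociation. Fix $\eta\in\HH$ and $\varepsilon>0$; since the pointwise stabilizers $G_F$ of finite sets form a neighborhood basis of the identity, continuity provides a finite $F\subseteq\Omega$ with $\sup_{g\in G_F}\lVert\pi(g)\eta-\eta\rVert\le\varepsilon$. By the Alaoglu--Birkhoff description of the projection onto the invariants (valid for an arbitrary group of unitaries), $p_F\eta$ lies in $\overline{\mathrm{conv}}\{\pi(g)\eta:g\in G_F\}$, being the unique invariant vector there; as that closed convex set is contained in the ball of radius $\varepsilon$ about $\eta$, we get $\lVert p_F\eta-\eta\rVert\le\varepsilon$, so $\eta\in\HH_\Omega$. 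Letting $\varepsilon\to0$ yields $\HH_\Omega=\HH$. It then remains to confront the additivity axiom: for pairwise disjoint sets dissociation already forces the ranges to be pairwise orthogonal (as $p_Ap_B=p_\emptyset=0$), so the issue is the equality of ranges; I would read this off the directed-union definition of the spaces $\HH_A$, matching it carefully with the precise notion of projection-valued measure from \cite{Mackey}, and I regard this as the delicate point to pin down.
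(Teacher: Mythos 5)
Your reduction is the right one, and everything you actually verify is correct: the equivariance relation is automatic, multiplicativity of $p_\pi$ (extended from finite to arbitrary subsets by a routine strong-limit argument, as the introduction of the paper already asserts) is a verbatim restatement of dissociation, $p_\pi(\emptyset)=0$ is exactly where the no-invariant-vector hypothesis enters, and your Alaoglu--Birkhoff argument for $p_\pi(\Omega)=\operatorname{id}$ is precisely the density statement the paper invokes elsewhere as \cite[Lem.~2.6]{BJJ24}. The paper gives no proof of this Fact, so the only question is whether your outline can be completed.

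It cannot, at least not for the additivity axiom, and your instinct that this is ``the delicate point'' undersells the problem: additivity genuinely fails. Take $G=\Sym(\Omega)$, which is dissociated, and let $\pi$ be the permutation representation on $\ell^2(\Omega^{(2)})$, where $\Omega^{(2)}$ denotes the set of two-element subsets of $\Omega$; this representation has no invariant vectors. For a finite $C\subseteq\Omega$, the $G_C$-invariant vectors are exactly the functions supported on the pairs contained in $C$, so $\HH_A=\ell^2(A^{(2)})$ for every $A\subseteq\Omega$. In particular $p_\pi(\{a\})=p_\pi(\{b\})=0$ for singletons while $p_\pi(\{a,b\})\neq 0$, so $p_\pi$ is not even finitely additive on disjoint sets (and $p_\pi(\Omega)=\operatorname{id}$ while $\sum_{\omega\in\Omega}p_\pi(\{\omega\})=0$). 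Multiplicativity, monotonicity, the two normalizations and equivariance all hold, but the countable-additivity axiom of a projection-valued measure in Mackey's sense does not. The Fact is therefore only correct if ``system of imprimitivity'' is read with the additivity requirement dropped, i.e.\ as the assertion that $A\mapsto p_\pi(A)$ is an equivariant, multiplicative, normalized projection-valued map; under that reading your proof is already complete and there is nothing left to pin down. You should either adopt that weaker reading explicitly or record the counterexample, rather than leave the additivity step open as if it were provable.
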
   

In \cite[Thm.~3.9]{BJJ24}, we revisit Mackey's famous Imprimitivity Theorem in the context of closed permutation groups. Below, if $G$ is a group and $H_1, H_2$ are two conjugate subgroups of $G$, say $H_1 = gH_2g^{-1}$ for some $g \in G$, and if $\sigma$ is a representation of $H_1$, $\sigma^g$ will denote the representation $h\mapsto \sigma(ghg^{-1})$ of $H_2$ on the same Hilbert space as $\sigma$.

\begin{theorem}
        Let $G\leq\Sym(\Omega)$ be a closed permutation group without algebraicity.
    \begin{enumerate}
        \item The dissociated irreducible unitary representations of $G$ are exactly the unitary representations isomorphic to one of the form $\Ind_{G_{\{A\}}}^G(\sigma)$ where $A$ ranges over the finite subsets of $\Omega$ and $\sigma$ over the irreducible representations of the finite group $G_{\{A\}}/G_A$.

        \item Two such irreducible representations $\Ind_{G_{\{A\}}}^G(\sigma)$ and $\Ind_{G_{\{ B\}}}^G(\tau)$ are isomorphic if and only if there exists $g\in G$ such that $g(B) = A$ and $\sigma^g \simeq \tau$.

        \item Every dissociated unitary representation of $G$ splits as direct a sum of irreducible subrepresentations.
    \end{enumerate}
\end{theorem}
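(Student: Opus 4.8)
The plan is to prove this as an avatar of Mackey's imprimitivity theorem adapted to the non-locally-compact setting, using the Fact above that identifies a dissociated representation without invariant vector with a system of imprimitivity $(\pi,p_\pi)$. I would first record two reductions valid for \emph{any} continuous unitary representation $\pi\colon G\to\UU(\HH)$. Since $\pi$ is continuous, every $\xi\in\HH$ is almost fixed by some $G_A$ with $A$ finite, and the mean ergodic (Alaoglu--Birkhoff) theorem, which realises the projection $p_A$ onto $\HH_A$ as the unique minimal-norm element of $\overline{\mathrm{conv}}\,\{\pi(g)\xi:g\in G_A\}$, then forces $\xi\in\HH_\Omega$; hence $\HH=\HH_\Omega$ and $p_\pi(\Omega)=I$, so once $\pi$ is dissociated $p_\pi$ is a genuine projection-valued measure. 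Next, setting $\HH_A^\circ\coloneqq\HH_A\ominus\overline{\sum_{a\in A}\HH_{A\setminus\{a\}}}$, the relation $p_Bp_C=p_{B\cap C}$ gives by inclusion--exclusion that the $\HH_A^\circ$ are pairwise orthogonal with $\HH=\overline{\bigoplus_{A\text{ finite}}\HH_A^\circ}$, that $\pi(g)\HH_A^\circ=\HH_{gA}^\circ$, and that $G_A$ acts trivially on $\HH_A^\circ$ while $G_{\{A\}}$ preserves it; thus each fibre $\HH_A^\circ$ carries a representation of the finite group $Q_A\coloneqq G_{\{A\}}/G_A\hookrightarrow\Sym(A)$.

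The conceptual crux, and the point where one must depart from the classical Mackey machine, is that $G$ is \emph{not} locally compact: the Mackey intertwining-number and irreducibility criteria fail here, and applied naively they even predict the wrong multiplicities (for $G=\Sym(\Omega)$, $A=\{1,2\}$ and $\sigma$ the sign character, the double-coset count gives $2$ whereas the representation is in fact irreducible). The substitute I would use is purely analytic: for any unitary representation, any closed $G$-invariant subspace $\KK$ and any finite $B$, the ergodic projection $p_B$ lies in the weak-operator closure of $\mathrm{conv}\,\{\pi(g):g\in G_B\}$, and operators in that closure preserve $\KK$; hence $p_B\KK\subseteq\KK$. In other words every $G$-intertwiner automatically commutes with the entire family $(p_B)_B$, i.e. with $p_\pi$. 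For a dissociated $\pi$ this means the lattice of closed invariant subspaces coincides with the lattice of $p_\pi$-compatible ones, which by the (now transparent, since the base is the discrete $G$-set of finite subsets) imprimitivity structure is governed entirely by the fibre representations of the finite groups $Q_A$.

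With this in hand the three parts follow. For part 1, if $\pi$ is irreducible then only one $G$-orbit of finite subsets contributes a nonzero fibre; fixing $A$ in it and $H=G_{\{A\}}$, the system is transitive over $G\cdot A\cong G/H$, so $\pi\cong\Ind_H^G(\sigma)$ with $\sigma=\pi|_{\HH_A^\circ}$ a representation of $Q_A$, and the compatibility of invariant subspaces shows $\pi$ is irreducible iff $\sigma$ is; this yields the forward inclusion. Part 3 then follows because each fibre, being a representation of a finite group, splits into irreducibles and induction commutes with direct sums, so decomposing over orbits and over fibres gives $\pi\cong\overline{\bigoplus_i\Ind_{G_{\{A_i\}}}^G(\sigma_i)}$ with each summand irreducible by the same compatibility argument; this simultaneously proves the reverse inclusion in part 1, namely that each $\Ind_{G_{\{A\}}}^G(\sigma)$ with $\sigma$ an irreducible representation of $Q_A$ is irreducible. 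For part 2, any isomorphism $\Ind_{G_{\{A\}}}^G(\sigma)\cong\Ind_{G_{\{B\}}}^G(\tau)$ is a $G$-intertwiner, hence $p_\pi$-compatible, hence carries fibres to fibres; this forces the two supporting orbits to coincide, i.e. $gA=B$ for some $g\in G$, and matches the fibre representations up to the resulting conjugation, giving $\sigma^g\simeq\tau$, while the converse is immediate by transporting the imprimitivity data by $g$.

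The remaining, and genuinely hardest, step is to verify that each building block $\Ind_{G_{\{A\}}}^G(\sigma)$ is actually \emph{dissociated} (not merely irreducible) --- this is the only place the no-algebraicity hypothesis is truly consumed. Here I would argue that, by induction in stages, $\ell^2(G/G_A)=\Ind_{G_A}^G(\mathbf{1})\cong\bigoplus_{\sigma}\big(\Ind_{G_{\{A\}}}^G(\sigma)\big)^{\oplus\dim\sigma}$, so it suffices to show that the quasi-regular representation on $G/G_A$ --- essentially $\ell^2$ of the $G$-set of ordered $|A|$-tuples of distinct points --- is dissociated, and then invoke that dissociation passes to subrepresentations \cite[Lem.~3.4]{BJJ24}. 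The dissociation of this quasi-regular representation is exactly the conditional-independence (de Finetti-type) content of no algebraicity: it is where one checks, via Neumann's lemma on the infinite orbits of the pointwise stabilisers, that $p_Bp_C=p_{B\cap C}$ for the natural action on tuples. I expect this combinatorial verification, rather than the representation-theoretic bookkeeping above, to be the main obstacle.
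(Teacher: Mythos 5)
Your proposal is correct and follows essentially the route the paper has in mind: the paper does not reprove this theorem (it is quoted from \cite[Thm.~3.9]{BJJ24}), but the preceding Fact identifying a dissociated representation with the system of imprimitivity $(\pi,p_\pi)$ over the discrete $G$-set of finite subsets of $\Omega$ is precisely the fibre decomposition $\HH=\overline{\bigoplus_A\HH_A^\circ}$ that you build, and your analytic substitute for the Mackey machinery (every intertwiner commutes with all $p_B$, since $T$ and $T^*$ carry $G_B$-invariant vectors to $G_B$-invariant vectors) is the right one. The only step you leave open --- dissociation of the building blocks --- is much easier than you fear: your reduction $\Ind_{G_{\{A\}}}^G(\sigma)\leq\ell^2(G/G_A)\leq\ell^2(\Omega^n)$ appears verbatim in the paper's Property~(T) section, and for $\ell^2(\Omega^n)$ no algebraicity gives directly that a square-summable $G_B$-invariant function must vanish on every infinite $G_B$-orbit, hence $\HH_B=\ell^2(B^n)$ and $p_Bp_C=p_{B\cap C}$ is immediate.
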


In particular, we get the following classification of unitary representations for dissociated groups. 

\begin{corollary}\label{cor.classification.dissociated.unirep}
   Let $G\leq \Sym(\Omega)$ be a closed permutation group. If $G$ is dissociated, then every irreducible unitary representation of $G$ is induced from an irreducible representation of the setwise stabilizer $G_{\{A\}}\coloneqq\{g\in G\colon g(A)=A\}$ for some finite  $A\subseteq\Omega$, which is trivial on the pointwise stabilizer $G_A$. Moreover, every unitary representation of $G$ is a direct sum of irreducible ones.   
\end{corollary}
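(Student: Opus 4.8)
The plan is to read the corollary off the preceding theorem (the permutation-group form of Mackey's imprimitivity theorem, \cite[Thm.~3.9]{BJJ24}); the only real work is to verify its hypothesis and to reinterpret its conclusion. Recall that, by definition and by Theorem \ref{thm.intro.equivalence.dissociation}, ``$G$ is dissociated'' means precisely that \emph{every} unitary representation of $G$ is dissociated. Hence, once the theorem applies to $G$, both assertions of the corollary will follow by specializing it to the representations at hand, all of which are automatically dissociated.

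First I would verify the hypothesis of the theorem, namely that $G$ has no algebraicity. When $G$ acts without fixed points this is exactly Lemma \ref{lem.dissociated.noAlg.WEI}. To handle the general case, let $F\subseteq\Omega$ denote the set of global fixed points of $G$. As $F$ is $G$-invariant, $G$ acts faithfully and without fixed points on $\Omega\setminus F$, and since the points of $F$ are fixed pointwise, the restriction identifies $G$ with a closed permutation group on $\Omega\setminus F$ having the same finite-set stabilizers $G_A$ and $G_{\{A\}}$ for $A\subseteq\Omega\setminus F$. This restricted action remains dissociated (its defining orthogonality relations for subsets of $\Omega\setminus F$ form a subfamily of those for $\Omega$), so Lemma \ref{lem.dissociated.noAlg.WEI} applies to it and yields no algebraicity; the resulting classification on $\Omega\setminus F$ then transfers verbatim to $\Omega$, with the finite sets $A$ chosen inside $\Omega\setminus F$. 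If $\Omega\setminus F$ happens to be finite then $G$ is a finite group, and the statement reduces to the classical semisimplicity of representations of finite groups, taking $A=\Omega\setminus F$ and $G_{\{A\}}=G$.

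With no algebraicity secured, I would invoke the theorem twice. For the first assertion, let $\pi$ be an irreducible unitary representation of $G$; since $G$ is dissociated, $\pi$ is dissociated, so part (1) provides a finite $A\subseteq\Omega$ and an irreducible representation $\sigma$ of the finite group $G_{\{A\}}/G_A$ with $\pi\simeq\Ind_{G_{\{A\}}}^G(\sigma)$. Pulling $\sigma$ back along the quotient map $G_{\{A\}}\to G_{\{A\}}/G_A$ realizes it as an irreducible representation of $G_{\{A\}}$ that is trivial on $G_A$, which is exactly the form asserted. For the second assertion, let $\pi$ be an arbitrary unitary representation of $G$; it too is dissociated because $G$ is, so part (3) of the theorem decomposes it as a direct sum of irreducible subrepresentations.

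The argument is essentially bookkeeping layered on top of the cited theorem, so I do not anticipate a genuine obstacle. The only step that deserves care is the reduction to a fixed-point-free action needed to invoke Lemma \ref{lem.dissociated.noAlg.WEI} for the no-algebraicity hypothesis, since that lemma is stated under the fixed-point-free assumption; the remaining content is immediate from the definition of dissociation and from parts (1) and (3) of the theorem.
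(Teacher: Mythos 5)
Your proposal is correct and follows the same route as the paper: the corollary is read off directly from the preceding imprimitivity theorem, using the fact that dissociation of $G$ makes every representation dissociated and (via Lemma \ref{lem.dissociated.noAlg.WEI}) supplies the no-algebraicity hypothesis. Your extra reduction to the fixed-point-free case is a detail the paper leaves implicit, and it is handled correctly.
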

If $E\subseteq \BB(\HH)$ for some Hilbert space $\HH$, we will denote by $E'\coloneqq \{h\in \BB(\HH), \ \forall k\in E, \ h\circ k = k\circ h\}$ the \emph{commutant of $E$}. Recall that a unitary representation $\pi \colon G\to\UU(\HH)$ is of type I if the von Neumann algebra $\pi(G)''$ it generates is of type I. A group is of type I if all of its unitary representations are of type I. We refer to \cite[Sec.~6]{BdlH} for a modern reference on type I topological groups. Here is a direct corollary of the above classification of unitary representations of dissociated groups. 

\begin{corollary}
    Every dissociated closed permutation group is of type I. 
\end{corollary}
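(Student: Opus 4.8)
The plan is to deduce the statement directly from the classification of unitary representations obtained in Corollary \ref{cor.classification.dissociated.unirep}. The only input I need is its last sentence: when $G$ is dissociated, \emph{every} unitary representation $\pi\colon G\to\UU(\HH)$ is a direct sum of irreducible subrepresentations. Unwinding the definition of a type I group, it therefore suffices to prove the following purely operator-algebraic fact: if a unitary representation $\pi$ of a separable group decomposes as a direct sum of irreducibles, then the von Neumann algebra $\pi(G)''$ it generates is of type I.

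To establish this, I would write $\pi=\bigoplus_{i\in I}\sigma_i$ with each $\sigma_i$ irreducible. Since $G\leq\Sym(\Omega)$ is second countable and $\HH$ is separable, the index set $I$ is countable. Grouping the summands by unitary equivalence yields the \emph{isotypic decomposition} $\HH=\bigoplus_{j\in J}\HH_j$, where $J$ indexes the equivalence classes occurring in $\pi$ and where $\HH_j\cong\KK_j\otimes\M_j$ in such a way that $\pi$ restricted to $\HH_j$ is unitarily equivalent to $\sigma_j\otimes\mathrm{id}_{\M_j}$; here $\KK_j$ carries the irreducible class $\sigma_j$ and $\M_j$ is the (separable) multiplicity space.

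It then remains to identify $\pi(G)''$. Since $\sigma_j$ is irreducible, Schur's lemma gives $\sigma_j(G)''=\BB(\KK_j)$, so on the invariant summand $\HH_j$ the generated algebra is $\BB(\KK_j)\otimes 1_{\M_j}$, a type I factor. As the $\HH_j$ are the isotypic components, the projections onto them are central in $\pi(G)''$, and one obtains $\pi(G)''=\bigoplus_{j\in J}\bigl(\BB(\KK_j)\otimes 1_{\M_j}\bigr)$, a countable direct sum of type I factors. A direct sum of type I von Neumann algebras is again type I, so $\pi(G)''$ is type I. Since $\pi$ was arbitrary, $G$ is of type I.

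Once Corollary \ref{cor.classification.dissociated.unirep} is in hand the argument is essentially bookkeeping, and the only point requiring genuine care is the passage from a bare direct sum of irreducibles to the clean isotypic form $\bigoplus_j\KK_j\otimes\M_j$ together with the resulting description of $\pi(G)''$; this is the standard structure theory of type I (multiplicity) representations and must be invoked correctly. Equivalently, and perhaps more conceptually, one may argue that a type I group is exactly one admitting no factor representation of type II or III, and observe that such a representation contains no irreducible subrepresentation and hence can never be a direct sum of irreducibles, which is precisely what dissociation forbids.
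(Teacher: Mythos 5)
Your argument is correct and follows exactly the route the paper intends: the paper states this as a direct corollary of Corollary \ref{cor.classification.dissociated.unirep} without supplying details, and you have simply written out the standard fact that a representation decomposing as a direct sum of irreducibles generates a type I von Neumann algebra (via the isotypic decomposition and Schur's lemma). The countability of the index set is not actually needed, but nothing in your argument depends on it.
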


\begin{proof}
    We use the terminology of \cite{BdlH}. Let $G\leq\Sym(\Omega)$ be a dissociated closed permutation group and let $\pi$ be a unitary representation of $G$. By Corollary \ref{cor.classification.dissociated.unirep}, there exists a set $I$, cardinals $(n_i)_{i\in I}$ and a family of mutually non-equivalent irreducible representations $(\pi_i)_{i\in I}$ of $G$ such that $\pi=\bigoplus_{i\in I} n_i\pi_i$. Then $\pi$ is type I by Remark 6.A.13 (2) in \cite{BdlH}. 
\end{proof}

\subsection{Property (T)}

A topological group $G$ has \textit{Property (T)} if there exists a compact subset $Q\subseteq G$ and $\varepsilon>0$ such that every unitary representation $G\to\UU(\HH)$ with a non-zero vector $\xi\in\HH$ satisfying 
\[\underset{g\in Q}{\sup}\lVert \pi(g)\xi-\xi\rVert \leq \varepsilon\lVert\xi\rVert\]
admits a non-zero invariant vector. The pair $(Q,\varepsilon)$ is called a Kazhdan pair. We describe here a useful criterion for proving Property (T) for closed permutation groups. It is due to Tsankov \cite{Tsankov} but is not explicitly formulated. We will not provide a proof, but a careful look at Section 6 (in particular Lemma 6.3, Proposition 6.4 and Lemma 6.5) of Tsankov’s paper allows one to extract the following result.

\begin{theorem}\label{thm.Property(T).Todor}
    Let $G\leq\Sym(\Omega)$ be a closed permutation group without algebraicity. Assume that every unitary representation of $G$ is a direct sum of irreducible ones and that every irreducible representation of $G$ is a subrepresentation of $G \curvearrowright \ell^2(\Omega^n)$ for some $n\leq 0$. Then $G$ has property $(T)$. 
\end{theorem}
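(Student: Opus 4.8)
The plan is to prove Property~(T) by exhibiting a single compact Kazhdan set that detects the trivial representation uniformly, i.e.\ by showing that the trivial representation $1_G$ is isolated in the unitary dual. The first observation is that the no-algebraicity hypothesis controls the fixed vectors of the standard representations: every $G$-orbit on $\Omega^n$ is infinite for $n \geq 1$, since the projection to the first coordinate maps the orbit of a tuple onto the $G$-orbit of one of its entries, which is infinite (apply no algebraicity to $A = \emptyset$). Hence $\ell^2(\Omega^n)$ has no nonzero $G$-invariant vector for $n \geq 1$, while the trivial representation lives in $\ell^2(\Omega^0) = \C$. The two hypotheses then collapse the problem to a single model representation: by complete reducibility, any representation without nonzero invariant vectors is a direct sum of nontrivial irreducibles, and by the embedding hypothesis each such irreducible sits inside some $\ell^2(\Omega^n)$ with $n \geq 1$. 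Therefore every representation without invariant vectors is weakly contained in $\rho \coloneqq \bigoplus_{n \geq 1}\ell^2(\Omega^n)$.

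Next I would invoke the standard equivalence: $G$ has Property~(T) if and only if there exist a compact $Q$ and $\varepsilon > 0$ such that no representation without nonzero invariant vectors admits a $(Q,\varepsilon)$-almost invariant vector; this follows from the orthogonal splitting $\HH = \HH^G \oplus (\HH^G)^\bot$, where the almost invariant vector either has a sizeable $\HH^G$-component or contradicts the gap on $(\HH^G)^\bot$. Because a compact Kazhdan pair witnessing $1_G \not\prec \rho$ transfers, up to an arbitrarily small loss in the constant, to any representation weakly contained in $\rho$ (weak containment approximates matrix coefficients uniformly on the compact set $Q$, so an almost invariant vector in $\sigma \prec \rho$ would force one in $\rho$), the entire statement reduces to the single assertion that $\rho$ has a spectral gap, i.e.\ $1_G \not\prec \rho$.

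The core of the argument is therefore to produce a compact set $Q \subseteq G$ and $\varepsilon > 0$ with $\sup_{g \in Q}\lVert \rho(g)\xi - \xi\rVert \geq \varepsilon\lVert \xi\rVert$ for every $\xi$, and here I would follow Tsankov \cite{Tsankov}. Each $\ell^2(\Omega^n)$ decomposes $G$-equivariantly as $\bigoplus_A \ell^2(G/G_A)$ over the finite nonempty sets $A$ of distinct coordinates, so it suffices to find a gap on the quasi-regular representations $\ell^2(G/G_A)$ uniformly in $A$. Infiniteness of the orbits of the pointwise stabilizers $G_A$ supplies the mixing that rules out almost invariant vectors, and organizing the $\ell^2(\Omega^n)$ by their coincidence stratification keeps the trivial representation confined to the bottom level $n = 0$; the content of Tsankov's Lemma~6.3, Proposition~6.4 and Lemma~6.5 is exactly to build the compact Kazhdan set and to bound the gap below independently of $n$.

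The main obstacle is precisely this last step, the \emph{uniform} spectral gap for $\rho$. Two features make it delicate. First, $Q$ cannot be taken to be a compact subgroup: any profinite subgroup $K \leq G$ fixes the normalized indicator of each of its finite orbits and so has nonzero invariant vectors in $\ell^2(\Omega)$, meaning $Q$ must be a genuinely spread-out compact set whose averaging operators behave like an expander. Second, the gap must be controlled uniformly over all $n \geq 1$ and all $A$, which demands the structural analysis of how $\ell^2(\Omega^n)$ decomposes and how the displacement introduced at each new diagonal level is bounded below. Once these are secured, the reductions of the first two paragraphs upgrade the gap on $\rho$ to a genuine Kazhdan pair for $G$, establishing Property~(T).
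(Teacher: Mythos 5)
Your proposal is correct and follows essentially the same route as the paper: the paper gives no self-contained proof, instead extracting the statement from Lemma~6.3, Proposition~6.4 and Lemma~6.5 of Tsankov's paper, and your reductions (no algebraicity kills invariant vectors in $\ell^2(\Omega^n)$ for $n\geq 1$, complete reducibility plus the embedding hypothesis confine everything without invariant vectors inside multiples of $\bigoplus_{n\geq 1}\ell^2(\Omega^n)$, and Kazhdan pairs transfer under weak containment) are all valid and standard. The one genuinely hard step --- the uniform spectral gap for $\bigoplus_{n\geq 1}\ell^2(\Omega^n)$ --- you defer to exactly the same lemmas of Tsankov that the paper cites, so nothing is added or lost relative to the paper's treatment.
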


Notice that a dissociated group satisfies the assumption of the above theorem. Indeed, if $G\leq\Sym(\Omega)$ is dissociated, then we know by Corollary \ref{cor.classification.dissociated.unirep} that every unitary representation of $G$ is a direct sum of irreducible ones, and that an irreducible one has the form $\Ind_{G_{\{A\}}}^G(\sigma)$ for some $A\subseteq\Omega$ and some irreducible unitary representation $\sigma \colon G_{\{A\}}\to\UU(\KK)$ which factors through the finite group $F\coloneqq G_{\{A\}}/G_A$. If $\lambda_F$ denotes the left-regular representation of $F$, it follows from the basic properties of induction (see e.g.\ \cite[Sec.~6]{Folland_1995}) that:
\begin{align*}
      \Ind_{G_{\{A\}}}^G(\sigma)\leq \Ind_{G_{\{A\}}}^G(\lambda_F) &\simeq \Ind_{G_{\{A\}}}^G(\Ind_{G_A}^{G_{\{A\}}}(1_{G_A}))  \\
      &\simeq \Ind_{G_A}^G(1_{G_A}) \\ &\simeq\ell^2(G/G_A)\\
      &\leq \ell^2(\Omega^n),
  \end{align*}
  where $n=\lvert A\rvert$. Therefore, Theorem \ref{thm.Property(T).Todor} applies and yields the following result. 

\begin{theorem}\label{thm.dissociated.property(T)}
    Every dissociated closed permutation group has Property (T).
\end{theorem}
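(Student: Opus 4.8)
The plan is to verify that a dissociated closed permutation group meets the three hypotheses of Tsankov's criterion, Theorem~\ref{thm.Property(T).Todor}, and then simply invoke it. That criterion requires that $G$ have no algebraicity, that every unitary representation of $G$ be a direct sum of irreducibles, and that every irreducible representation embed into $G\curvearrowright\ell^2(\Omega^n)$ for some $n$. The middle and last conditions are delivered essentially for free by the classification in Corollary~\ref{cor.classification.dissociated.unirep}, so the real work is to secure the no-algebraicity hypothesis and to carry out the embedding computation.

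First I would dispose of the no-algebraicity requirement. Lemma~\ref{lem.dissociated.noAlg.WEI} yields no algebraicity from dissociation, but only under the standing hypothesis that $G$ acts without fixed points, whereas Property~(T) is indifferent to the chosen action; so I would reduce to the fixed-point-free case. Let $F\subseteq\Omega$ be the set of global fixed points of $G$. If $\Omega\setminus F$ is finite, then restriction embeds $G$ faithfully into the finite group $\Sym(\Omega\setminus F)$, so $G$ is finite, hence compact, and trivially has Property~(T). Otherwise $\Omega\setminus F$ is countably infinite and $G$ acts faithfully and without fixed points upon it, giving a topological isomorphism onto a closed permutation group $G'\leq\Sym(\Omega\setminus F)$. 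Since $G_A=G_{A\cap(\Omega\setminus F)}$ for every finite $A\subseteq\Omega$ (points of $F$ being fixed by all of $G$), the spaces of invariant vectors satisfy $\HH_A=\HH_{A\cap(\Omega\setminus F)}$, so the relation $p_Ap_B=p_{A\cap B}$ for $G$ reduces to the same relation over subsets of $\Omega\setminus F$, which is exactly dissociation of $G'$. As Property~(T) is an isomorphism invariant, it suffices to treat $G'$, and there Lemma~\ref{lem.dissociated.noAlg.WEI} applies.

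Assuming now that $G$ has no algebraicity, Corollary~\ref{cor.classification.dissociated.unirep} gives that every unitary representation of $G$ decomposes as a direct sum of irreducibles, each of the form $\Ind_{\Nm{G}{A}}^G(\sigma)$ with $A\subseteq\Omega$ finite and $\sigma$ trivial on $G_A$, i.e.\ factoring through the finite group $F=\Nm{G}{A}/G_A$. Realizing each such $\sigma$ as a subrepresentation of the left-regular representation $\lambda_F$ of $F$, and using that $\lambda_F$, viewed as a representation of $\Nm{G}{A}$ via the quotient map, is isomorphic to $\Ind_{G_A}^{\Nm{G}{A}}(1_{G_A})$ together with induction in stages, one obtains
\[
\Ind_{\Nm{G}{A}}^G(\sigma)\ \leq\ \Ind_{\Nm{G}{A}}^G(\lambda_F)\ \simeq\ \Ind_{G_A}^G(1_{G_A})\ \simeq\ \ell^2(G/G_A).
\]
Finally, the orbit map $gG_A\mapsto g\cdot\bar a$, where $\bar a\in\Omega^n$ enumerates $A$ and $n=\lvert A\rvert$, identifies $G/G_A$ equivariantly with the orbit $G\cdot\bar a\subseteq\Omega^n$ (because $G_A=\Stab_G(\bar a)$), so $\ell^2(G/G_A)\hookrightarrow\ell^2(\Omega^n)$. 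All three hypotheses of Theorem~\ref{thm.Property(T).Todor} then hold, and the theorem yields Property~(T).

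The main obstacle is not the representation-theoretic computation, which is routine once one has the classification and the standard identities for induced representations, but rather the careful matching of hypotheses with the black-box criterion: in particular noticing that the no-algebraicity condition demanded by Theorem~\ref{thm.Property(T).Todor} is guaranteed by Lemma~\ref{lem.dissociated.noAlg.WEI} only in the fixed-point-free setting, which is precisely what forces the preliminary reduction carried out above.
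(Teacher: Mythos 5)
Your proof is correct and follows essentially the same route as the paper: verify the hypotheses of Theorem~\ref{thm.Property(T).Todor} via the classification in Corollary~\ref{cor.classification.dissociated.unirep} together with the induction-in-stages computation $\Ind_{G_{\{A\}}}^G(\sigma)\leq \Ind_{G_A}^G(1_{G_A})\simeq\ell^2(G/G_A)\leq\ell^2(\Omega^n)$. Your preliminary reduction to the fixed-point-free case, needed to invoke Lemma~\ref{lem.dissociated.noAlg.WEI} for the no-algebraicity hypothesis, is a point of care that the paper leaves implicit but it does not alter the argument.
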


A topological group has the strong Property (T) if there exists a Kazhdan pair $(Q,\varepsilon)$ with $Q$ finite. Evans and Tsankov proved in \cite{EvansTsankov} that every dissociated oligomorphic group has strong Property (T). 

\begin{question}
    Does every dissociated group have strong Property (T)?
\end{question}
To give a positive answer to this question, it would be sufficient to prove that every dissociated group $G\leq\Sym(\Omega)$ contains a free group of finite rank acting freely on $\Omega$, see e.g.\ \cite[Thm.~5.2]{BJJ24} or \cite{EvansTsankov}.

\subsection{The Howe-Moore property}

Another important rigidity phenomenon in harmonic analysis is the Howe-Moore property: the vanishing at infinity of the matrix coefficients of its unitary representations. While classically stated for locally compact groups, we will give a general definition for topological groups in Definition \ref{Def.HW}.

Historically, this property was proved for several families of locally compact groups. It was first observed by Howe, Moore and Zimmer in some Lie groups\footnote{Connected simple real Lie groups with finite center \cite{howe_asymptotic_1979}.} as well as in their algebraic counterparts\footnote{Isotropic simple algebraic groups over non-Archimedean local fields \cite{howe_asymptotic_1979}.}. A third class of groups with this property was identified by Lubotzky, Mozes and independently Pemantle in the context of groups acting on regular trees, a result later refined by Burger and Mozes\footnote{Topologically-simple closed subgroups of $\Aut(\mathbf{T})$ that act 2-transitively on the boundary of a bi-regular tree $\mathbf{T}$ where both arities are at least 3. \cite[Prop.~4.2]{burger_lattices_2000}.}. All these groups admit a so called Cartan decomposition, allowing for a unified proof that covers all known examples \cite{ciobotaru_unified_2015}. While the groups we consider in this paper are never locally compact, the Howe-Moore property can still be defined within the framework of Rosendal's large scale geometry \cite{Rosendal}. In this section, we show that some dissociated groups have this property. These new examples are not covered by the unified proof from the classical case.

Recall that an \textit{écart} on a topological group $G$ is a map $\delta\colon G^2 \to \R_+$ such that for every $x,y,z\in G$, we have $\delta(x,x) = 0$ and $\delta(x,z) \leqslant \delta(x,y) + \delta(y,z)$. It is \textit{left-invariant} if $\delta(xy,xz) = \delta(y,z)$ for every $x,y,z\in G$. A subset $B\subseteq G$ is \textit{coarsely bounded} (in the left coarse-structure) if its $\delta$-diameter is finite for every continuous left-invariant écart $\delta$ on $G$. We will write $\CC\BB(G)$ for the set of coarsely bounded subsets of $G$. The group $G$ is coarsely bounded if $G\in \CC\BB(G)$ and \textit{locally bounded} if $\CC\BB(G)$ contains a non-empty open set. Recall also that if $G$ is locally compact, $\CC\BB(G)$ coincides with the collection of relatively compact subsets of $G$.

A generalization of compactness that naturally arises in model theory is Roelcke-precompactness. A subset $B$ of a topological group $G$ is Roelcke-precompact if for every neighborhood $U$ of the identity in $G$, there exists a finite subset $F\subseteq G$ such that $B \subseteq UFU$. A group is \textit{Roelcke-precompact} if it is Roelcke-precompact as a subset of itself and \textit{locally Roelcke-precompact} if it admits a non-empty Roelcke-precompact open subset. The class of Roelcke-precompact groups contains all the automorphism groups of $\aleph_0$-categorical structures. 

Since Roelcke-precompact subsets are always coarsely bounded, Roelcke-precompact groups have a trivial large scale geometry. Locally Roelcke-precompact Polish groups, however, form a particularly nice class of groups for the study of their large scale geometry \cite[Chap.~3]{Rosendal}. In particular, they are characterized among Polish groups by the fact that their Roelcke completion is locally compact \cite{zielinski_locally_2021}. Moreover, a subset of a locally Roelcke-precompact Polish group is coarsely bounded if and only if it is Roelcke-precompact. Examples include the group $\Aut(\mathbf{T}_\infty)$ of automorphisms of the regular tree of countably-infinite valency or the isometry group of the integral Urysohn space $\Z\U$. See Section \ref{sec.examples} for more examples.

Now that this framework has been established, the Howe-Moore property naturally generalizes to general topological groups. To that aim, we will say that a function $f\colon G \to \mathbb{C}$ \textit{vanishes at infinity} if for every $\varepsilon>0$, the set $\{x\in G, |f(x)|\geqslant 0\}$ is coarsely bounded, and write $C_0(G)$ for the set of all such continuous functions. In what follows, given a unitary representation $G\to \UU(\HH)$, an \textit{invariant vector} is a fixed point of the action $G\curvearrowright \HH$.

\begin{definition}\label{Def.HW}
    A topological group $G$ has the \textit{Howe-Moore} property if for every unitary representation $G\to \UU(\HH)$ with no non-zero invariant vector, every matrix coefficient of $\pi$ vanishes at infinity, i.e.\ if for every $\xi, \eta \in \HH,$ the map $g\mapsto \langle \pi(g)\xi\mid \eta\rangle$ belongs to $C_0(G)$.
\end{definition}

We recover some properties that are well known when $G$ is a locally compact group.

\begin{lemma}\label{lem.properbounded}
    Let $G$ be a topological group with the Howe-Moore property. Then every proper open subgroup of $G$ is coarsely bounded.
\end{lemma}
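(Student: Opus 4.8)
The plan is to test the Howe--Moore property against the quasi-regular representation attached to $H$. Let $H\leq G$ be a proper open subgroup and let $\lambda\colon G\to\UU(\ell^2(G/H))$ be the quasi-regular representation, $(\lambda(g)f)(xH)=f(g^{-1}xH)$. Since $H$ is open, $G/H$ is discrete and each orbit map $g\mapsto\lambda(g)\delta_{xH}$ is locally constant (it is constant on the open set $xHx^{-1}$), so $\lambda$ is a continuous unitary representation. A direct computation gives $\lambda(g)\delta_{eH}=\delta_{gH}$, so the matrix coefficient of the vector $\delta_{eH}$ is exactly the indicator function $\langle\lambda(g)\delta_{eH},\delta_{eH}\rangle=\mathbf{1}_H(g)$. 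If I could feed this straight into the Howe--Moore property I would be done: for $\varepsilon=1/2$ the set on which $\mathbf{1}_H\geq\varepsilon$, namely $H$ itself, would lie inside a bounded set, and bornologies are stable under taking subsets.

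The one genuine obstacle is that $\lambda$ need not be without non-zero invariant vectors: when $[G:H]$ is finite the constant functions are invariant, so the Howe--Moore hypothesis fails for $\lambda$ itself. To circumvent this I would project away the invariant part. Write $P$ for the orthogonal projection of $\ell^2(G/H)$ onto the closed subspace of $\lambda(G)$-invariant vectors and let $\KK$ be its orthogonal complement. Then $\KK$ is $\lambda$-invariant and $\lambda|_\KK$ has no non-zero invariant vector, so the Howe--Moore property applies to it. Taking $\xi\coloneqq\delta_{eH}-P\delta_{eH}\in\KK$ and using that $P$ commutes with each $\lambda(g)$ and that $P\delta_{eH}$ is invariant, the matrix coefficient simplifies to
$$\phi(g)\coloneqq\langle\lambda(g)\xi,\xi\rangle=\langle\lambda(g)\delta_{eH},\delta_{eH}\rangle-\langle\lambda(g)\delta_{eH},P\delta_{eH}\rangle=\mathbf{1}_H(g)-c,$$
where $c\coloneqq\lVert P\delta_{eH}\rVert^2$ is a constant (the cross term is independent of $g$ because $P\delta_{eH}$ is fixed by $\lambda(g^{-1})$). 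Crucially $c<1$: indeed $c=\lVert P\delta_{eH}\rVert^2\leq\lVert\delta_{eH}\rVert^2=1$ with equality only if $\delta_{eH}$ is itself invariant, i.e.\ only if $H=G$, which is excluded since $H$ is proper. (One can check that $c=1/[G:H]$ with the convention $1/\infty=0$, but only $c<1$ is needed.)

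It then remains to conclude. By the Howe--Moore property, $\phi\in C_0(G,\BB)$. Applying the definition of vanishing at infinity with $\varepsilon\coloneqq(1-c)/2>0$ yields a bounded set $B\in\BB$ such that $\lvert\phi(g)\rvert<\varepsilon$ for every $g\notin B$. But for $g\in H$ one has $\phi(g)=1-c$, hence $\lvert\phi(g)\rvert=1-c>\varepsilon$; by contraposition $g\in B$. Thus $H\subseteq B$, and since $\BB$ is stable under subsets, $H\in\BB$, i.e.\ $H$ is bounded. The main point to get right is the passage to $\KK$ (equivalently, the finite-index case), as this is precisely what makes the argument uniform in the index $[G:H]$.
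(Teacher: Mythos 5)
Your proof is correct and follows essentially the same route as the paper: test Howe--Moore against the quasi-regular representation on $\ell^2(G/H)$ and read off $H\subseteq B$ from a matrix coefficient of (the non-invariant part of) $\delta_{eH}$. The only difference is cosmetic: the paper splits into the infinite-index case (where $\delta_U$ itself works) and the finite-index case (where it uses $\xi=\delta_U-1/[G:U]$, which is exactly your $\delta_{eH}-P\delta_{eH}$), whereas you merge the two cases by projecting away the invariant part uniformly.
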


\begin{proof}
Let $U\leqslant G$ be a proper open subgroup. Consider the associated quasi-regular representation $\lambda_{G/U} \colon G \to\UU(\ell^2(G/U))$. Note that this representation has non-zero invariant vectors (the constant functions) if and only if $U$ has finite index in $G$. We deal with the two cases separately.

Assume first that $U$ has infinite index in $G$. Write $\delta_U$ for the Dirac function at $U$ on $G/U$. Then $\delta_U \in \ell^2(G/U)$ and, by the Howe-Moore property, the set
$$B \coloneqq \{g\in G\ \left|\langle\lambda_{G/U}(g)\delta_U\mid\delta_U\rangle\right| \geqslant 1 \}$$
is coarsely bounded. Since for every $g \in U$, $\langle \lambda_{G/U}(g)\delta_U\mid\delta_U\rangle = 1$, we have $U \subseteq B$ hence $U$ is coarsely bounded.

Assume now that $U$ has finite index in $G$ and restrict $\lambda_{G/U}$ to the orthogonal complement $\ell^2_0(G/U)$ of the constant functions. By construction, this representation has no non-zero invariant vector. Denoting again by $\delta_U$ the Dirac function at $U$ on $G/U$, define $\xi\coloneqq \delta_U - 1/[G:U]$. Then $\xi\in \ell^2_0(G/U)$ and by the Howe-Moore property, the set
$$B \coloneqq \{g\in G, |\langle \lambda_{G/U}(g)\xi\mid\xi\rangle|\geqslant 1 \}$$
is coarsely bounded. Since $U$ is proper, we have for every $g \in U$, $\langle \lambda_{G/U}(g)\xi\mid\xi\rangle = 2 - 2/[G:U] \geq 1$. Thus $U \subseteq B$ hence $U$ is coarsely bounded. 
\end{proof}

Since translates and finite unions of coarsely bounded sets are coarsely bounded, we get the following consequence.

\begin{corollary}
        Let $G$ be a non-bounded topological group which has the Howe-Moore property. Then every proper open subgroup of $G$ has infinite index.
\end{corollary}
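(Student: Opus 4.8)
The plan is to combine Lemma \ref{lem.properbounded} with the stability properties of the bornology $\BB$ recorded in its definition. By Lemma \ref{lem.properbounded}, any proper open subgroup $U\leq G$ is already known to be bounded, so the only thing left to rule out is that such a $U$ could have finite index. I would therefore argue by contradiction.

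Suppose $U\leq G$ is a proper open subgroup with $[G:U]=n<+\infty$, and fix coset representatives $g_1,\dots,g_n$ so that $G=\bigcup_{i=1}^n g_iU$. Since $U$ is bounded by Lemma \ref{lem.properbounded} and each singleton $\{g_i\}$ is bounded (first bornology axiom), the product $g_iU=\{g_i\}U$ is bounded by the product-stability axiom (fourth axiom). A finite union of bounded sets is also bounded (again the fourth axiom), so $G$ itself belongs to $\BB$, i.e.\ $(G,\BB)$ is bounded. This contradicts the hypothesis that $(G,\BB)$ is non-bounded, and hence no proper open subgroup can have finite index.

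There is essentially no obstacle here: the mathematical content is entirely carried by Lemma \ref{lem.properbounded}, and the finite-index step only invokes that $\BB$ is closed under left translation and finite unions, both immediate from the bornology axioms. The single point worth making explicit is that $g_iU$ is a left translate of the bounded set $U$ and therefore inherits boundedness, which is exactly the observation flagged in the sentence preceding the statement (``Since translates and finite unions of bounded sets are bounded\dots'').
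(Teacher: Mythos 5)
Your argument is correct and is exactly the one the paper intends: Lemma \ref{lem.properbounded} gives that a proper open subgroup is bounded, and if it had finite index then $G$ would be a finite union of left translates of a bounded set, hence bounded, contradicting the hypothesis. This matches the sentence in the paper preceding the corollary, so nothing further is needed.
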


The main result of this section is the following. 

\begin{theorem}\label{thm.Howe.Moore.dissociated}
Let $G \leqslant \Sym(\Omega)$ be a closed permutation group which acts without fixed point on $\Omega$. If $G$ is dissociated, then the following properties are equivalent. 

    \begin{enumerate}[label=(\roman*)]
        \item\label{item.1} $G$ has the Howe-Moore property.
        \item\label{item.2} Every proper open subgroup of $G$ is coarsely bounded.
        \item\label{item.3} For every non-empty finite subset $A \subseteq \Omega$, $\Nm G A$ is coarsely bounded.
        \item\label{item.4} For every non-empty finite subset $A \subseteq \Omega$, $G_A$ is coarsely bounded.
        \item\label{item.5} For every $a\in \Omega$, $G_a$ is coarsely bounded. 
    \end{enumerate}
\end{theorem}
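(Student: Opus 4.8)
The plan is to establish the five equivalences cyclically, as $\ref{item.1}\Rightarrow\ref{item.2}\Rightarrow\ref{item.3}\Rightarrow\ref{item.4}\Rightarrow\ref{item.5}\Rightarrow\ref{item.1}$, with essentially all the content concentrated in the last implication. The three implications $\ref{item.3}\Rightarrow\ref{item.4}\Rightarrow\ref{item.5}$ are immediate from the bornology axioms: for a nonempty finite $A$ one has $G_A\leq\Nm G A$, so $G_A$ is bounded as a subset of a bounded set, and specializing to $A=\{a\}$ gives that $G_a$ is bounded. (In fact \ref{item.3}, \ref{item.4} and \ref{item.5} are equivalent on the nose, since $G_A=\bigcap_{a\in A}G_a$ and $G_A$ has finite index in $\Nm G A$, both facts combining with the axioms of a bornology.)

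For $\ref{item.1}\Rightarrow\ref{item.2}$ I would simply invoke Lemma \ref{lem.properbounded}. For $\ref{item.2}\Rightarrow\ref{item.3}$, fix a nonempty finite $A\subseteq\Omega$. The setwise stabilizer $\Nm G A$ is open because it contains the open subgroup $G_A$, so it only remains to check that it is \emph{proper}, so that hypothesis \ref{item.2} applies. This is where the standing assumptions enter: since $G$ is dissociated and acts without fixed point, Lemma \ref{lem.dissociated.noAlg.WEI} tells us $G$ has no algebraicity. Picking $a\in A$ and applying no algebraicity with $C\coloneqq A\setminus\{a\}$, the $G_C$-orbit of $a$ is infinite, hence not contained in the finite set $A$; this yields $g\in G_C$ with $g(a)\notin A$, so that $g(A)\neq A$ and $\Nm G A\neq G$. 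Then \ref{item.2} gives that $\Nm G A$ is bounded.

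The heart of the argument is $\ref{item.5}\Rightarrow\ref{item.1}$, where dissociation enters through the classification of unitary representations. Let $\pi\colon G\to\UU(\HH)$ have no nonzero invariant vector. By Corollary \ref{cor.classification.dissociated.unirep}, $\pi$ decomposes as a direct sum of irreducibles, each isomorphic to $\Ind_{\Nm G A}^G(\sigma)$ for a finite $A\subseteq\Omega$ and an irreducible $\sigma$ of the finite group $\Nm G A/G_A$; the absence of invariant vectors forces $A\neq\emptyset$ in each summand, since the only irreducible with $A=\emptyset$ is the trivial representation. As in the computation preceding Theorem \ref{thm.dissociated.property(T)}, each such irreducible embeds as a subrepresentation of $\ell^2(\Omega^{n})$ with $n=\lvert A\rvert\geq1$. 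It therefore suffices to show that, for every $n\geq1$, all matrix coefficients of $G\curvearrowright\ell^2(\Omega^n)$ lie in $C_0(G,\BB)$. On the dense subspace of finitely supported functions, the coefficient $g\mapsto\langle\lambda(g)\delta_{\bar x}\mid\delta_{\bar y}\rangle$ equals $1$ when $g\bar x=\bar y$ and $0$ otherwise, so it is supported on $\{g\colon g\bar x=\bar y\}$, which is empty or a left coset $g_0 G_{\bar x}=g_0 G_A$ of the tuple stabilizer, where $A$ is the nonempty coordinate set of $\bar x$. By hypothesis \ref{item.5} (through $G_A\subseteq G_a$) the set $G_A$ is bounded, hence so is the coset $g_0 G_A$, and the matrix coefficient vanishes off a bounded set, so it lies in $C_0(G,\BB)$.

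Finally I would promote this from the dense subspace to all of $\HH$ using the standard facts that matrix coefficients are uniformly bounded and uniformly continuous in the vectors, so that $C_0(G,\BB)$, being closed under uniform limits, is preserved under approximation, and that on the algebraic direct sum $\bigoplus_i^{\mathrm{alg}}\pi_i$ a matrix coefficient is a \emph{finite} sum of matrix coefficients of the summands. I expect the main obstacle to be exactly this last bookkeeping: one must carry matrix coefficients correctly through the subrepresentation embeddings $\pi_i\hookrightarrow\ell^2(\Omega^{n_i})$ and through the possibly infinite direct sum, reducing everything uniformly to the building blocks $\ell^2(\Omega^n)$. The conceptual crux, by contrast, is already supplied by dissociation—namely that every representation without invariant vectors splits as a sum of representations induced from finite-set stabilizers—without which there would be no reason for the matrix coefficients to be governed by cosets of the point stabilizers.
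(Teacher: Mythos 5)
Your proposal is correct and follows essentially the same route as the paper: Lemma \ref{lem.properbounded} gives \ref{item.1}$\Rightarrow$\ref{item.2}, the description of the lattice of open subgroups (Lemmas \ref{lem.caracterisation.noAlg.WEI} and \ref{lem.dissociated.noAlg.WEI}) handles the intermediate equivalences, and the classification of Corollary \ref{cor.classification.dissociated.unirep} together with a density argument closes the cycle. The only cosmetic difference is in the last implication: the paper computes matrix coefficients directly in the induced representation $\Ind_{G_{\{A\}}}^G(\sigma)$ on the dense subspace of functions supported on finitely many cosets of the bounded setwise stabilizer, whereas you first embed each irreducible into $\ell^2(\Omega^n)$ (as in the Property (T) discussion) and compute with delta functions supported on cosets of the bounded pointwise stabilizer --- the same calculation in different coordinates.
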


\begin{proof}
The implication \ref{item.1} $\Rightarrow$ \ref{item.2} is proved in Lemma \ref{lem.properbounded}. The equivalences \ref{item.2} $\Leftrightarrow$ \ref{item.3} $\Leftrightarrow$ \ref{item.4} $\Leftrightarrow$ \ref{item.5} are straightforward by recalling that for a dissociated group $G$ acting without fixed point on $\Omega$, every open subgroup of $G$ lies between the pointwise stabilizer and the setwise stabilizer of a unique finite subset of $\Omega$ (Lemma \ref{lem.caracterisation.noAlg.WEI} and \ref{lem.dissociated.noAlg.WEI}). 

Let us finally prove that \ref{item.3} $\Rightarrow$ \ref{item.1}. Notice that a direct sum of representations whose matrix coefficients are all in $C_0(G)$ have the same property. By the classification of unitary representations for dissociated groups (Corollary \ref{cor.classification.dissociated.unirep}), it suffices to prove that the matrix coefficients of every irreducible unitary representation of $G$ without non-zero invariant vector belong to $C_0(G)$.

The only irreducible representation of a topological group that has non-zero invariant vectors is the trivial representation on $\mathbb{C}$. So let $\pi$ be a nontrivial irreducible representation of $G$ on a Hilbert space $\HH$. By Corollary \ref{cor.classification.dissociated.unirep}, there exists $A\subseteq\Omega$ a finite subset and $\sigma \colon G_{\{A\}}\to\UU(\KK)$ an irreducible unitary representation which is trivial on $G_A$ and such that $\pi \simeq \Ind_{G_{\{A\}}}^G(\sigma)$. Note that $A$ is non-empty since $\pi$ is nontrivial. Fix a transversal $(h_i)_{i\in I}$ for $G/G_{\{A\}}$, so that for all $\xi,\eta\in\HH$, the scalar product $\langle \xi\mid \eta\rangle$ is given by:
\[\langle \xi \mid \eta \rangle = \sum_{i\in I}\langle \xi(h_i)\mid \eta(h_i)\rangle.\]
Define
\[D\coloneqq\{\xi\in\HH\colon \supp(\xi)\text{ is contained in a finite union of left cosets of }G_{\{A\}}\}.\]
Then $D$ is a dense subset of $\HH$. Fix a non-zero vector $\xi\in D$ and let $B\coloneqq\supp(\xi)$. Notice that $B$ is coarsely bounded by assumption (recall that $A$ is non-empty). Then for all $g\in G$, we have 
\begin{align*}
    \langle \pi(g)\xi\mid \xi\rangle &= \sum_{i\in I}\langle \xi(g\inv h_i)\mid \xi(h_i)\rangle \\
    & = \sum_{\substack{i\in I \\ h_i\in B\cap gB}}\langle \xi(g\inv h_i)\mid \xi(h_i)\rangle 
\end{align*}
Therefore, for every $g\notin BB\inv$, $B\cap gB = \emptyset$ and $\langle \pi(g)\xi\mid \xi\rangle =0$. Since $BB\inv$ is coarsely bounded, we obtain that for every $\xi\in D$, the map $g\mapsto\langle\pi(g)\xi\mid\xi\rangle$ belongs to $C_0(G)$. Since $D$ is dense in $\HH$, this shows that for all $\xi,\eta\in\HH$, the matrix coefficient $g\mapsto\langle \pi(g)\xi\mid\eta\rangle$ belongs to $C_0(G)$. That is, $G$ has the Howe-Moore property. 
\end{proof}

Given a countable, additive subsemigroup of $\R_+$, one can consider the Fraïssé limit of finite metric spaces $(X,d)$ such that $d(X\times X)\subseteq\Delta\cup\{0\}$. This is called the Urysohn $\Delta$-metric space and is denoted by $\U_\Delta$. Isometry groups of Urysohn $\Delta$-metric spaces provide new examples of Polish non-locally compact groups satisfying the Howe-Moore property. 

\begin{corollary}
    Let $\Delta$ be a countable, additive subsemigroup of $\R_+$. Then $\Isom(\U_\Delta)$ has the Howe-Moore property.
\end{corollary}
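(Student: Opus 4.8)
The plan is to obtain the corollary as an application of Theorem \ref{thm.Howe.Moore.dissociated} with the bornology $\BB = \CC\BB$ of coarsely bounded subsets. Write $G = \Isom(\U_\Delta)$ and let $\Omega$ be the countable underlying set of $\U_\Delta$; I assume $\Delta$ contains a positive element, so that $\U_\Delta$ is infinite. First I would record the hypotheses of the theorem. The coarsely bounded subsets form a group bornology, trivially contained in itself, so $(G, \CC\BB)$ is a topological bornological group. Since all singletons realize the same (empty) type, ultrahomogeneity of $\U_\Delta$ makes $G$ act transitively on $\Omega$, hence without fixed point. Thus it remains to check that $G$ is dissociated and to verify one of the equivalent conditions \ref{item.2}--\ref{item.5}.

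For dissociation, I would invoke the analysis of isometry groups of Urysohn-type spaces: $\Isom(\U_\Delta)$ is dissociated by the results of Section \ref{sec.examples} (the additive semigroup $\Delta$ yields strong cofinite amalgamation over countable sets and weak elimination of imaginaries), compare also \cite{BJJ24}. Granting this, Theorem \ref{thm.Howe.Moore.dissociated} reduces the Howe-Moore property for $(G, \CC\BB)$ to condition \ref{item.5}: for every $a \in \Omega$, the point stabilizer $G_a$ is coarsely bounded.

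The heart of the argument is therefore to prove that $G_a$ is coarsely bounded, and this is the step I expect to be the main obstacle. I would use Rosendal's combinatorial criterion (see \cite{Rosendal}): a subset of a Polish group with a neighbourhood basis of open subgroups is coarsely bounded if and only if for every open subgroup $V$ there are a finite set $S$ and an integer $n$ with $G_a \subseteq (SV)^n$. Since the pointwise stabilizers $G_C$ of finite $C \ni a$ form such a basis, it suffices, for each finite $C \ni a$, to bound uniformly the word length over $S \cup G_C$ of the elements of $G_a$. Writing any $g \in G_a$ through the copy $g(C)$ of $C$, the task becomes to find $N = N(C)$ such that any two isometric copies of $C$ sharing the point $a$ and the same distances to $a$ (that is, the same type over $a$) can be carried onto one another by a product of at most $N$ isometries, each either fixing $C$ pointwise or lying in a fixed finite set $S$. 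This is where the additivity of $\Delta$ is essential: because $\Delta + \Delta \subseteq \Delta$ and $\Delta$ is unbounded, the strong amalgamation property lets one realize, far from any prescribed finite configuration, auxiliary points at controlled large distances, and one routes the two copies of $C$ onto each other through such far-away pivots in a bounded number of elementary moves---crucially, a number that does not depend on how far apart the two copies lie. That last point is exactly the subtlety, since the cross-distances between the two copies are unbounded (which is what prevents $G_a$ from being Roelcke precompact).

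The delicate point, deserving the most care, is to make this bounded number of moves genuinely uniform in $g$: concretely one must produce, once and for all, a finite set $S \subseteq G$ together with the pivot construction so that every copy of $C$ is reached from $C$ in at most $N$ steps. An alternative, to avoid reproving it, is to quote Rosendal's computation of the coarse geometry of isometry groups of Urysohn spaces, where the orbit map $g \mapsto g \cdot a$ is shown to be a coarse equivalence between $G$ and $\U_\Delta$, which is equivalent to the coarse boundedness of $G_a$. Either way, once $G_a$ is known to be coarsely bounded, condition \ref{item.5} of Theorem \ref{thm.Howe.Moore.dissociated} holds and the Howe-Moore property follows.
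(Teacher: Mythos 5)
Your overall route is the same as the paper's: verify dissociation of $\Isom(\U_\Delta)$, verify coarse boundedness of point stabilizers, and conclude by Theorem \ref{thm.Howe.Moore.dissociated}. Two remarks on the justifications. First, your parenthetical claim that dissociation follows because ``the additive semigroup $\Delta$ yields strong cofinite amalgamation over countable sets'' is false in general: when $\Delta$ accumulates at $0$ (e.g.\ $\Delta=\Q_{>0}$, giving the rational Urysohn space $\Q\U$), $\sigma$-SAP \emph{fails}, as the paper points out in Remark \ref{rem.omega.SAP.examples} --- one can have an infinite set $A$ and a point $x\notin A$ with $d(A,x)=0$, which forces identifications in any amalgam. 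Dissociation nevertheless holds for all such $\Delta$, but the correct reference is \cite[Thm.~4.12]{BJJ24} (proved via approximating sequences in the non-discrete case), which is what the paper cites; your hedge ``compare also \cite{BJJ24}'' happens to point at the right source. Second, for the coarse boundedness of $G_a$, your hand-made pivot argument is left with acknowledged gaps, but your fallback of quoting Rosendal is exactly the paper's move: it invokes \cite[Thm.~6.31 and Ex.~6.32]{Rosendal}, where the canonical metric amalgamation over a non-empty finite set is a functorial amalgamation and hence all pointwise stabilizers of non-empty finite sets are coarsely bounded. With these two repairs the proof is the paper's proof.
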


\begin{proof}
   Notice that countable, additive subsemigroup of $\R_+$ are unbounded, so $\Isom(\U_\Delta)$ is locally bounded but not coarsely bounded by \cite[Lem.~4.3]{BJJ24}. We proved in \cite[Thm.~4.12]{BJJ24} that $\Isom(\U_\Delta)$ is dissociated. Moreover, pointwise stabilizers $\Isom(\U_\Delta)_A$ of non-empty finite subsets $A\subseteq\U_\Delta$ are coarsely bounded by Theorem 6.31 and Example 6.32 of \cite{Rosendal}. The conclusion thus follows from Theorem \ref{thm.Howe.Moore.dissociated}. 
\end{proof}

Just as in the locally compact case, the Howe-Moore property has a direct translation in ergodic theory. A Boolean p.m.p.\ action $\alpha \colon G\to\Aut(X,\mu)$ of a topological group $G$ is \emph{mixing} if for every measurable sets $Y,Z\subseteq X$, the map \[g\mapsto \mu(Y\cap\alpha(g)Z)-\mu(Y)\mu(Z)\] belongs to $C_0(G)$. Moreover, a Boolean p.m.p.\ action $\alpha \colon G\to\Aut(X,\mu)$ is:
\begin{itemize}
    \item ergodic if and only if its Koopman representation $\kappa_0\colon G\to\UU(\L^2(X,\mu)\ominus\C)$ has no nontrivial invariant vector,
    \item mixing if and only if for all $\xi,\eta\in\L^2(X,\mu)\ominus\C$, the matrix coefficient $g\mapsto \langle \kappa_0(g)\xi\mid\eta\rangle$ of $\kappa_0$ belongs to $C_0(G)$.
\end{itemize} 
Thus, a topological group $G$ has the Howe-Moore property if and only if every ergodic Boolean p.m.p.\ action of $G$ is mixing.

\section{Ergodic theory of dissociated groups}

\subsection{Non-singular actions}

Let $(X,\nu)$ be a $\sigma$-finite nontrivial measure space. We denote by $\Aut(X,[\nu])$ the group of bimeasurable bijections which preserve the class of $\nu$, where two such bijections are identified if they coincide on a conull set. Since $\nu$ is equivalent to a probability measure, we may always assume that $\nu$ is a probability measure. Recall that there is a natural embedding $\kappa\colon\Aut(X,[\nu])\to\UU(\L^2(X,\nu))$ defined as follows: for all $g\in\Aut(X,[\nu])$ and $f\in\L^2(X,\nu)$, 
\begin{align}\label{eq.koopman}
    \kappa(g)f \colon x\mapsto \sqrt{\frac{dg_*\nu}{d\nu}(x)}f(g\inv x)
\end{align}
where for every $g\in G$, $g_*\nu$ is the pushforward of $\nu$ by $g$ and $dg_*\nu/d\nu$ is the Radon-Nikodym derivative of $g_*\nu$ with respect to $\nu$. The image of $\kappa$ is closed and we equip $\Aut(X,[\nu])$ with the topology induced from the strong operator topology on $\UU(\L^2(X,\mu))$. This is a Polish group. For more details on $\Aut(X,[\nu])$ as a topological group, we refer to Section 4 of \cite{danilenko}. 

A \textit{non-singular} action of a topological group $G$ on a probability space $(X,\nu)$ is a continuous homomorphism $\alpha\colon G\to\Aut(X,[\nu])$. It is \textit{ergodic} if every measurable set $Y\subseteq X$ such that for all $g\in G,\  \nu(\alpha(g)Y\triangle Y)=0$ is either null of conull. A non-singular ergodic action $G\to\Aut(X,[\nu])$ is of \textit{type I} if $\nu$ is purely atomic, of \textit{type II} if there exists a diffuse, $\sigma$-finite, $G$-invariant measure $\mu$ which is equivalent to $\nu$ and of \textit{type III} otherwise. 

\begin{theorem}
    Let $G\leq\Sym(\Omega)$ be a closed permutation group. If $G$ is dissociated, then $G$ admits no ergodic non-singular action of type III. 
\end{theorem}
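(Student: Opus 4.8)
The goal is to show that an ergodic non-singular action $\alpha\colon G\to\Aut(X,[\nu])$ admits an equivalent $G$-invariant $\sigma$-finite measure, since this is exactly what it means for $\alpha$ not to be of type $\mathrm{III}$. I normalise $\nu$ to a probability measure and encode the non-singularity through the additive Radon--Nikodym cocycle $c(g,x)\coloneqq\log\frac{dg_*\nu}{d\nu}(x)$, so that the Koopman formula \eqref{eq.koopman} reads $\kappa(g)f(x)=e^{c(g,x)/2}f(g\inv x)$. The standard Maharam correspondence identifies the existence of an equivalent invariant $\sigma$-finite measure with $c$ being a measurable coboundary, i.e.\ $c(g,x)=b(gx)-b(x)$ for some measurable $b\colon X\to\R$ (one then checks that $e^{-b}\nu$ is invariant). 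So the whole problem reduces to producing such a $b$.

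The only structural input available is that $G$ is dissociated, and I would feed it in through the Koopman representation $\kappa\colon G\to\UU(\L^2(X,\nu))$: being a unitary representation of a dissociated group, it is dissociated, hence by Corollary \ref{cor.classification.dissociated.unirep} a direct sum of its space of invariant vectors and finite-type irreducibles $\Ind_{\Nm G A}^G(\sigma)$. Equivalently, by Lemma \ref{lem.koopman.dissociated}, the associated Boolean structure is dissociated: the sub-$\sigma$-algebras $\FF_A$ of $G_A$-invariant sets form an increasing net in the finite subsets $A\subseteq\Omega$, they satisfy $\FF_A\indep_{\FF_{A\cap B}}\FF_B$, and $\FF_\emptyset$ is trivial by ergodicity, while $\HH_A=\L^2(X,\FF_A,\nu)$. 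There is an easy sub-case that illustrates the mechanism: if the cyclic subrepresentation generated by the constant function $\mathbf 1\in\L^2(X,\nu)$ meets the space of invariant vectors, then there is a nonzero $\xi$ with $\kappa(g)\xi=\xi$; its modulus $|\xi|$ is again invariant, has $G$-invariant hence (by ergodicity) conull support, and $|\xi|^2\nu$ is an equivalent invariant \emph{probability} measure, so $\alpha$ is of type $\mathrm{I}$ or $\mathrm{II}_1$.

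The real difficulty is that a dissociated group typically produces actions of type $\mathrm{II}_\infty$ (an action induced from a p.m.p.\ action of some $\Nm G A$ carries an infinite invariant measure), for which the invariant density is not integrable and cannot be read off as an $\L^2$-vector. The heart of the proof must therefore produce $b$ as a \emph{merely measurable} function. My plan is to do this with the filtration $(\FF_A)$: each Koopman vector attached to the cocycle lies, up to arbitrarily small error, in some finite level $\L^2(X,\FF_A,\nu)$, so I would form the conditional expectations onto the $\FF_A$ of a natural primitive of $c$ and run a (reverse-)martingale argument, using conditional independence to control the increments and ergodicity (triviality of $\FF_\emptyset$) to kill the tail; the cocycle identity then upgrades the limit to an honest coboundary $b$.

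I expect this trivialisation step to be the main obstacle. The subtlety is precisely that the target measure may be infinite, so the argument cannot take place inside $\L^2(X,\nu)$ and must instead extract a measurable $b$ from the finite-type structure of $\kappa$ together with the conditional independence of the $\FF_A$; this is where the full strength of dissociation — not merely the decomposition of $\kappa$ into finite-type pieces, but the conditional independence of \emph{all} the invariant $\sigma$-algebras — has to be combined with ergodicity. An alternative, and probably cleaner, route to the same conclusion is to run the identical analysis on the Maharam extension $G\curvearrowright(X\times\R,\nu\otimes\mathrm{Leb})$ with its commuting translation flow: its Koopman representation is again dissociated, and trivialising $c$ amounts to producing a translation-equivariant $G$-invariant function on $X\times\R$, which the finite-type decomposition and ergodicity of the base should deliver.
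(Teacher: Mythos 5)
Your reduction of the statement to ``the Radon--Nikodym cocycle is a measurable coboundary'' is the right target, and your easy sub-case (a $\kappa(G)$-invariant vector yields an equivalent invariant probability measure) is exactly the mechanism that does the work in the actual argument. But the proposal stops at the point where the proof has to begin: the passage from the decomposition of $\kappa$ to a merely measurable density $e^{-b}$ in the possibly type $\mathrm{II}_\infty$ case is only announced as ``the main obstacle,'' with a plan (conditional expectations of a primitive of $c$ onto the $\FF_A$, a reverse-martingale limit) that is never executed. Worse, that plan rests on the identification $\HH_A=\L^2(X,\FF_A,\nu)$ and on conditional independence of the $\sigma$-algebras $\FF_A$, and these come from Lemma \ref{lem.koopman.dissociated}, which is a statement about \emph{measure-preserving} Boolean actions. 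For a non-singular action the Koopman operators carry the factor $\sqrt{dg_*\nu/d\nu}$, so a $\kappa(G_A)$-invariant vector $\xi$ is not an $\FF_A$-measurable function: the invariance equation says that $|\xi|^2\nu$ is a $G_A$-invariant finite measure absolutely continuous with respect to $\nu$. Your filtration argument is therefore set up in the wrong category, and it is not clear it can be repaired without changing the objects it manipulates.

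The paper obtains the theorem as a corollary of Theorem \ref{thm.nonsingular.induced} (adapted from Tsankov): every non-singular action of a dissociated group is a countable disjoint union of actions $\Ind_{\Nm G {A_i}}^G(\alpha_i)$ induced from Boolean p.m.p.\ actions of setwise stabilizers. The route there uses precisely the corrected version of your observation: dissociation forces the non-singular Koopman representation to decompose into representations induced from finite subsets (Corollary \ref{cor.classification.dissociated.unirep}), the $G_A$-invariant vectors are square roots of densities of $G_A$-invariant finite measures, and summing the translates of such a measure over the cosets of $\Nm G A$ produces an equivalent $G$-invariant $\sigma$-finite measure on each ergodic piece --- which is exactly the exclusion of type III. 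So your proposal has the correct skeleton but a genuine gap at its core, together with a misapplication of the p.m.p.\ dictionary to the non-singular setting.
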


In fact, a stronger result holds. In order to state it properly, we need the definition of induction in the setting of p.m.p.\ actions (there is actually a more general definition of induction in the setting of non-singular actions, however it is not relevant for our purpose). Let $G$ be a Polish group and $H\leq G$ be an open subgroup. We endow the countable space $G/H$ with the counting measure. Let $\alpha\colon H\to\Aut(X,\mu)$ be a Boolean p.m.p.\ action of $H$ on a standard probability space $(X,\mu)$. Let $\MAlg(X,\mu)$ be the measure algebra of $(X,\mu)$. Define  \[\mathfrak{A}\coloneqq\{f\colon G\to\MAlg(X,\mu)\colon \forall g\in G, \forall h\in H, f(gh)=\alpha(h\inv)f(g)\}.\]
Define two binary operations $\triangle$ and $\cap$ on $\mathfrak{A}$ as follows: for all $f_1,f_2\in\mathfrak{A}$
\[f_1\triangle f_2\colon g\mapsto f_1(g)\triangle f_2(g)\]
\[f_1\cap f_2\colon g\mapsto f_1(g)\cap f_2(g).\]
Let $0_\mathfrak{A}$ and $1_\mathfrak{A}$ be the functions which are identically equal to $\emptyset$ and $X$ respectively. Notice that for every $f\in\mathfrak{A}$, the map $g\mapsto \mu(f(g))$ is constant on each left $H$-coset. Denote by $\mu(f(q))$ its value on the coset $q\in G/H$. Define the following map $\tilde\mu \colon \mathfrak{A}\to[0,+\infty]$ by 
\[\tilde{\mu}(f)\coloneqq \sum_{q\in G/H}\mu(f(q)).\]

It is left to the cautious reader to check that $(\mathfrak{A},\triangle,\cap,0_\mathfrak{A},1_\mathfrak{A},\tilde{\mu})$ is a measure algebra, which is separable in its measure-algebra topology. Therefore, there exists a standard probability space $(Y,\nu)$ such that $\mathfrak{A}=\MAlg(Y,\nu)$. The continuous homomorphism $G\to\Aut(\mathfrak{A})$ given by the $G$-action by precomposition on $\mathfrak{A}$ thus provides a Boolean p.m.p.\ action $G\to\Aut(Y,\nu)$ which is denoted by $\Ind_{H}^G(\alpha)$ and is called the \textit{Boolean p.m.p.\ $G$-action induced by} $\alpha$. 

The proof of Tsankov's classification of non-singular actions of Roelcke-precompact non-Archimedean Polish groups \cite[Thm.~3.4]{tsankov2024nonsingularprobabilitymeasurepreservingactions} adapts without effort to the context of dissociated groups. Due to the substantial similarities, we omit the proof of the following result.

\begin{theorem}\label{thm.nonsingular.induced}
    Let $G\leq\Sym(\Omega)$ be a closed permutation group. Assume that $G$ is dissociated. Let $(X,\nu)$ be a standard probability space and $\alpha \colon G\to\Aut(X,[\nu])$ be a non-singular action. Then $\alpha$ is isomorphic to a countable union $\bigsqcup_{i\in I}\Ind_{G_{\{A_i\}}}^G(\alpha_i)$, where for every $i\in I$, $A_i$ is a finite subset of $\Omega$ and $\alpha_i$ is a Boolean p.m.p.\ action of $G_{\{A_i\}}$.  \end{theorem}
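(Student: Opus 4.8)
The plan is to mirror the strategy of Tsankov's classification and to promote the dissociation of the Koopman representation to a structural statement about the measure algebra. Write $\kappa\colon G\to\UU(\L^2(X,\nu))$ for the Koopman representation of $\alpha$, defined through the Radon--Nikodym square root as in \eqref{eq.koopman}. Since $G$ is dissociated, $\kappa$ is a dissociated representation, so by the classification (Corollary \ref{cor.classification.dissociated.unirep} together with the Imprimitivity Theorem) it is a direct sum of representations induced from setwise stabilizers. This already suggests the correct shape of the answer, but it only records Hilbert-space data and is blind to the measure class; the genuine work is to recover the multiplicative structure of $\L^\infty(X)$ together with the $G$-action and the class $[\nu]$.

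The central objects are the factor $\sigma$-algebras $\FF_A$ of $G_A$-invariant sets, for $A\subseteq\Omega$ finite. Dissociation states precisely that $\FF_A$ and $\FF_B$ are independent conditionally on $\FF_{A\cap B}$; in particular $\FF_A\cap\FF_B=\FF_{A\cap B}$ and $\FF_\emptyset$ is the $G$-invariant $\sigma$-algebra. I would use this conditional-independence lattice to attach to almost every point a canonical level, namely the $G$-orbit of a finite set $A$ governing the factor $X\to X/\FF_A$, and to partition $X$ into the corresponding $G$-invariant pieces. Because $\nu$ is a probability measure and the pieces are disjoint, only countably many carry positive mass, which yields the countable index set $I$. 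On a single piece, $G_{\{A\}}$ normalises $G_A$ and hence acts on the factor $X/\FF_A$; this is the candidate base action $\alpha_0$ of $G_{\{A\}}$. The reconstruction of $\alpha$ from $\alpha_0$ as $\Ind_{G_{\{A\}}}^G(\alpha_0)$ is exactly where dissociation pays off: the conditionally independent copies $(\FF_{gA})_{gG_{\{A\}}\in G/G_{\{A\}}}$ play the role of the coset-indexed fibers in the explicit construction of $\mathfrak{A}$, and the triviality of the exchangeable (de Finetti) $\sigma$-algebra generated by these independent copies is what identifies the realised measure algebra with the induced one.

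The crux is the third step: showing that $\alpha_0$ is genuinely probability-measure-preserving, equivalently that $\alpha$ is not of type III. Since the induced action built from a p.m.p.\ base automatically preserves the $\sigma$-finite measure $\tilde\mu(f)=\sum_{q}\mu(f(q))$, ruling out type III amounts to proving that the Radon--Nikodym cocycle $g\mapsto dg_*\nu/d\nu$ is a coboundary on each piece. Here dissociation is essential: as $G$ has no algebraicity, Neumann's lemma (as in the proof of topological simplicity) produces, for a fixed $A$, infinitely many translates $g_nA$ with controlled intersections, hence infinitely many independent copies $\FF_{g_nA}$ of the base factor. Any nonconstant multiplicative Radon--Nikodym weight on the base would then have to be replicated independently across all these copies, which is incompatible with the finiteness of the total mass of the probability space $X$; this forces the weight to be trivial and produces the equivalent invariant measure. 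This is the same de Finetti--type mechanism that underlies the non-existence of type III actions.

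I expect this third step to be the main obstacle. The representation-level decomposition is purely Hilbert-space and does not see the measure class, so the difficulty lies entirely in working at the level of the measure algebra and controlling the cocycle through the independence of the factors $\FF_{g_nA}$; once the invariant measure is produced, assembling the countably many orbit-types into the disjoint union $\bigsqcup_{i\in I}\Ind_{G_{\{A_i\}}}^G(\alpha_i)$ is essentially bookkeeping.
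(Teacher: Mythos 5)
The paper gives no written proof of this theorem: it states that the proof of \cite[Thm.~3.4]{tsankov2024nonsingularprobabilitymeasurepreservingactions} ``adapts without effort'' to dissociated groups and omits the argument. Your outline is broadly consistent with that strategy (partition into countably many invariant pieces, identify each piece as induced from a setwise stabilizer, show the base action is p.m.p.), but as written it has two genuine gaps. First, the step ``attach to almost every point a canonical level, namely the $G$-orbit of a finite set $A$ governing the factor $X\to X/\FF_A$'' is a description of the desired output, not an argument. The mechanism that actually produces the sets $A_i$ and the coset-indexed partition is the lattice of open subgroups: one takes a minimal non-null element $Y$ of $\bigcup_A\FF_A$ (such elements exist because $\bigcup_A\L^2(X,\FF_A,\nu)$ is dense in $\L^2(X,\nu)$), notes that its stabilizer $V=\{g\in G\colon\nu(\alpha(g)Y\triangle Y)=0\}$ is open since it contains some $G_A$, and then applies Lemmas \ref{lem.dissociated.noAlg.WEI} and \ref{lem.caracterisation.noAlg.WEI} to get $G_B\leq V\leq G_{\{B\}}$ for a unique finite $B$; minimality plus ergodicity make the translates of $Y$ an almost-disjoint countable cover, which is exactly the induced structure. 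None of this appears in your sketch.

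Second, and more seriously, your argument for excluding type III does not work as stated. You claim that a nonconstant Radon--Nikodym weight on the base ``would have to be replicated independently across all these copies, which is incompatible with the finiteness of the total mass of the probability space.'' But an induced action built from a merely non-singular (non-p.m.p.) base still lives on a probability space --- one simply weights the cosets by a summable sequence --- so finiteness of the total mass rules out nothing. Moreover, the conditional independence of the $\FF_{g_nA}$ is a statement about factor $\sigma$-algebras, whereas the Radon--Nikodym cocycle of the $V$-action on the piece $Y$ is a single cocycle on one piece, not an object replicated across those copies; the proposed incompatibility never materializes. Proving that the base action preserves a probability measure (equivalently, that the cocycle is trivial on $G_B$, i.e.\ that $\mathbf{1}_Y$ is fixed by the Koopman operators of $G_B$) is precisely the technical heart of Tsankov's theorem and requires a genuine argument exploiting the structure of the dissociated Koopman representation; your independence heuristic does not substitute for it.
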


\subsection{Stabilizer rigidity}

This section goes over the main result of \cite{jahel2023stabilizers}, where are studied Borel p.m.p.\ actions of closed permutation groups through the perspective of the stabilizers associated to these actions. Here, by 
a \textit{Borel p.m.p.\ action} of a Polish group $G$ we mean a Borel action $G\times X\to X$ on a standard Borel space $X$ together with a Borel $G$-invariant probability measure $\mu$. These are sometimes called spatial p.m.p.\ actions to emphasize the difference between Boolean p.m.p.\ actions. To differentiate them, Borel p.m.p.\ actions will be denoted by $G\curvearrowright(X,\mu)$ whereas Boolean p.m.p.\ actions by $G\to\Aut(X,\mu)$. A Borel p.m.p.\ action $G\curvearrowright(X,\mu)$ always yields a Boolean p.m.p.\ action $G\to \Aut(X,\mu)$ (continuity is automatic) and we will say that the Borel p.m.p.\ action $G\curvearrowright(X,\mu)$ is dissociated if the associated Boolean p.m.p.\ action is. A Borel p.m.p.\ action $G\curvearrowright(X,\mu)$ is:
\begin{itemize}
    \item \emph{essentially transitive} if there exists a $G$-orbit $O\subseteq X$ such that $\mu(O)=1$ (recall that orbits are Borel \cite[Thm.~15.14]{KechrisCDST1995}).
    \item \emph{essentially free} if the set $\{x\in X\colon \forall g\in G\setminus\{1_G\}, g\cdot x\neq x\}$ is conull. 
\end{itemize}

A subgroup $G\leq \Sym(\Omega)$ is primitive if it acts transitively on $\Omega$ and there are no $G$-invariant equivalence relation on $\Omega$ apart from equality and $\Omega\times\Omega$. One can easily check that if $G$ has no algebraicity and weakly eliminates imaginaries, then $G$ is primitive (Corollary 3.6 in \cite{jahel2023stabilizers}). We can now state the main result of \cite{jahel2023stabilizers}, which is a permutation group variant on Stuck-Zimmer's Theorem for locally compact groups. In what follows, given a Borel p.m.p.\ action $G\curvearrowright (X,\mu)$ and $x \in X$, we will denote by $\Stab(x)\coloneqq\{g \in G, \ g(x)=x\}$ the stabilizer of $x$ in $G$.

\begin{theorem}[Theorem 1.4 of \cite{jahel2023stabilizers}]\label{thm.pmprigiddissociatedexpanded}
    Let $G<\Sym(\Omega)$ be a proper, closed permutation group. If $G$ has no algebraicity and is primitive, then for every dissociated Borel p.m.p.\ ergodic action $G\curvearrowright(X,\mu)$, the following hold:
    \begin{itemize}
        \item either $\Stab(x)\curvearrowright\Omega$ has a fixed point for $\mu$-almost every $x\in X$ and in this case $G\curvearrowright(X,\mu)$ is essentially free,
        \item or $\Stab(x)\curvearrowright\Omega$ has no fixed point for $\mu$-almost every $x\in X$ and in this case $G\curvearrowright(X,\mu)$ is essentially transitive. 
    \end{itemize}
\end{theorem}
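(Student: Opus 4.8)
The plan is to pass to the invariant random subgroup associated with the action, namely the conjugation-equivariant map $x\mapsto\Stab(x)\in\Sub(G)$, and to read the dichotomy off its structure. First I would observe that the property \enquote{$\Stab(x)$ has a fixed point on $\Omega$} is invariant under conjugation, so the set $\{x\colon\Fix(\Stab(x))\neq\emptyset\}$ is $G$-invariant; by ergodicity it is null or conull, which yields the clean alternative stated in the two bullets. It then remains to establish the two \enquote{in this case} conclusions, and this is where the work lies. Throughout I would use that $G$ is transitive on $\Omega$ (primitivity) and that the lattice of open subgroups is controlled by Lemma \ref{lem.caracterisation.noAlg.WEI}.

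For the essentially free direction I would exploit the $G$-equivariant random subset $\Lambda(x)\coloneqq\Fix(\Stab(x))\subseteq\Omega$, whose law is $G$-invariant since $\Lambda(gx)=g\Lambda(x)$ and $\mu$ is $G$-invariant. As $G$ acts transitively on $\Omega$, Corollary \ref{cor.deFinetti} applies to the process $(\mathds{1}_{\Lambda(x)}(\omega))_{\omega\in\Omega}$ and shows it is i.i.d.\ Bernoulli with some parameter $p$. In the \enquote{fixed point a.s.} case one has $\Lambda(x)\neq\emptyset$ almost surely, hence $p>0$; and if $p=1$ then $\Lambda(x)=\Omega$ a.s., so $\Stab(x)$ fixes every point of $\Omega$ and is therefore trivial by faithfulness, which is exactly essential freeness. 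The delicate point is to exclude $0<p<1$: here $\Lambda(x)$ is not an arbitrary subset but the \emph{exact} fixed-point set of a subgroup, and I would combine the no-algebraicity hypothesis with the conditional independence of the invariant $\sigma$-algebras $\FF_A$ furnished by dissociation to rule out a proper, positive-density i.i.d.\ fixed-point set, forcing $p\in\{0,1\}$ and hence $p=1$.

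For the essentially transitive direction, which corresponds to $p=0$ (no fixed point a.s.), I would invoke the structural decomposition of Theorem \ref{thm.nonsingular.induced}: passing to the associated Boolean action, dissociation together with ergodicity shows it is isomorphic to a single induced action $\Ind_{\Nm G A}^G(\beta)$ for some finite $A\subseteq\Omega$ and some ergodic p.m.p.\ action $\beta$ of the setwise stabilizer $\Nm G A$. Computing the stabilizers of the induced action, one finds that $\Stab(x)$ is conjugate to $\Stab_\beta(y)\leq\Nm G A$, so the no-fixed-point hypothesis transfers to $\beta$, and I would then analyse how $\Nm G A$ embeds in $\Sym(\Omega)$, using no-algebraicity and primitivity, to force $\beta$, and hence the whole action, to concentrate on a single orbit. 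I expect this transitivity statement to be the main obstacle: it is the genuine Stuck--Zimmer core of the theorem, where one must upgrade \enquote{$\Stab(x)$ is large / open} to \emph{actual} essential transitivity. This is precisely the step that cannot follow from mere countability of orbits (an ergodic p.m.p.\ action with countable orbits need not be transitive), and it is where dissociation, through the conditional independence of the invariant $\sigma$-algebras, must be used to preclude an intermediate ergodic action with countable but non-transitive orbits.
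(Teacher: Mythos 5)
The paper itself does not prove this statement: it is quoted verbatim as Theorem~1.4 of \cite{jahel2023stabilizers}, so there is no internal proof to compare against and I can only judge your proposal on its own terms. Your skeleton is reasonable and partly in the spirit of the cited proof: the ergodicity dichotomy on the conjugation-invariant event $\{x\colon \Fix(\Stab(x))\neq\emptyset\}$ is correct, and studying the invariant random subset $\Lambda(x)=\Fix(\Stab(x))$ through Corollary~\ref{cor.deFinetti} is a sound idea (modulo one unaddressed point: to apply that corollary to the pushforward measure on $\{0,1\}^\Omega$ you must first know that this \emph{factor} is still dissociated and ergodic; this is true, since the Koopman representation of a factor is a subrepresentation and dissociation passes to subrepresentations \cite[Lem.~3.4]{BJJ24}, but it needs to be said). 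The problem is that the two steps you yourself flag as ``delicate'' and ``the main obstacle'' are the entire mathematical content of the theorem, and for neither do you supply an argument. Excluding $0<p<1$ is not a loose end to be tidied by ``combining no-algebraicity with conditional independence''; it is precisely the essentially-free half of the theorem, and nothing in your sketch indicates how the i.i.d.\ structure of $\Lambda$ interacts with the fact that $\Lambda(x)$ is the exact fixed-point set of a subgroup.

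The transitive case is worse, because the tool you propose cannot do the job. For an ergodic action preserving the probability measure $\mu$, Theorem~\ref{thm.nonsingular.induced} can only ever produce the trivial decomposition $A=\emptyset$: if the action were isomorphic as a non-singular action to $\Ind_{\Nm{G}{A}}^G(\beta)$ with $A\neq\emptyset$, the canonical $G$-equivariant countable partition indexed by $G/\Nm{G}{A}$ would push $\mu$ forward to a $G$-invariant probability measure on the transitive $G$-set $G/\Nm{G}{A}$, whose points must then all have equal positive mass; but $[G:\Nm{G}{A}]\geq [G:G_A]/\lvert A\rvert!$ is infinite by transitivity and no algebraicity, a contradiction. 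Hence the ``single induced action'' is the action itself, your stabilizer computation is vacuous, and the paragraph transferring the no-fixed-point hypothesis to $\beta$ collapses. The genuine Stuck--Zimmer core --- first showing that a stabilizer without fixed points must a.s.\ be open, and then upgrading open stabilizers (equivalently, countable orbits) to essential transitivity, which as you correctly note does not follow from countability of orbits alone --- is left entirely open. For how these steps are actually carried out you should consult the proof in \cite{jahel2023stabilizers}.
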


For dissociated groups, we therefore have the following striking dichotomy, reminiscent of Stuck-Zimmer's Theorem for higher rank simple Lie group \cite{StuckZimmer}. 

\begin{theorem}
    Let $G$ be a proper, transitive, closed permutation group. If $G$ is dissociated, then every Borel p.m.p.\ ergodic action of $G$ is either essentially transitive or essentially free.
\end{theorem}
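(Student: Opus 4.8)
The plan is to deduce the statement directly from Theorem \ref{thm.pmprigiddissociatedexpanded}, so the task reduces to checking that its hypotheses are met and that the dissociation assumption on $G$ propagates to each of its Borel p.m.p.\ actions. First I would observe that since $G$ acts transitively on the infinite set $\Omega$, it has no fixed point: otherwise the orbit of a fixed point would be a singleton, contradicting transitivity on an infinite set. Thus $G$ is a proper, closed permutation group acting without fixed point on $\Omega$.

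Next I would establish the two structural hypotheses of Theorem \ref{thm.pmprigiddissociatedexpanded}, namely no algebraicity and primitivity. Since $G$ is dissociated and acts without fixed point, Lemma \ref{lem.dissociated.noAlg.WEI} shows that $G$ has no algebraicity and weakly eliminates imaginaries. As recalled above (Corollary 3.7 of \cite{jahel2023stabilizers}), a closed permutation group with no algebraicity that weakly eliminates imaginaries is primitive. Hence $G$ is a proper, closed, primitive permutation group with no algebraicity, which are exactly the standing hypotheses of Theorem \ref{thm.pmprigiddissociatedexpanded}.

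It then remains to verify that every Borel p.m.p.\ ergodic action of $G$ is dissociated, so that the conclusion of Theorem \ref{thm.pmprigiddissociatedexpanded} applies to all of them. This is the point where the \emph{global} assumption that $G$ is dissociated (rather than a single action being dissociated) is used: by Theorem \ref{thm.intro.equivalence.dissociation}, every Boolean p.m.p.\ action of $G$ is dissociated. Given a Borel p.m.p.\ ergodic action $G\curvearrowright(X,\mu)$, its associated Boolean p.m.p.\ action is therefore dissociated, and by the definition adopted above this means the Borel action itself is dissociated. Applying Theorem \ref{thm.pmprigiddissociatedexpanded} to this dissociated ergodic action yields the dichotomy: either $\Stab(x)\curvearrowright\Omega$ fixes a point for $\mu$-almost every $x$ and the action is essentially free, or $\Stab(x)\curvearrowright\Omega$ fixes no point for $\mu$-almost every $x$ and the action is essentially transitive. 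In either case the action is essentially transitive or essentially free, which is the desired conclusion.

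Since every step is a direct invocation of an earlier result, there is no serious obstacle to overcome; the only points requiring care are the elementary observation that transitivity on an infinite set forces the absence of fixed points, and the remark that it is dissociation of the group itself — not of a single action — that guarantees the hypothesis ``dissociated action'' of Theorem \ref{thm.pmprigiddissociatedexpanded} is automatically satisfied by \emph{every} Borel p.m.p.\ ergodic action of $G$.
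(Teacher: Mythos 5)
Your proposal is correct and follows exactly the route the paper intends: the theorem is stated as an immediate consequence of Theorem \ref{thm.pmprigiddissociatedexpanded}, with dissociation of $G$ supplying no algebraicity and weak elimination of imaginaries (hence primitivity, given transitivity) and guaranteeing that every Borel p.m.p.\ action is dissociated via Theorem \ref{thm.intro.equivalence.dissociation}. Your explicit verification of the fixed-point-freeness and of the propagation of dissociation to each action fills in precisely the details the paper leaves implicit.
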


Notice that these results concern Borel p.m.p.\ actions. Indeed, the notions of essential freeness and essential transitivity make no sense for Boolean p.m.p.\ action. In fact, there even exists two Borel p.m.p.\ actions of $\Sym(\Omega)$, one being essentially free, the other one being essentially transitive, whose associated Boolean p.m.p.\ actions are Booleanly isomorphic \cite[Sec.~4.3]{HoareauLeMaître}.

\subsection{Exchangeable processes}

Let $G$ be a closed permutation group on $\Omega$ and let $(Z,\zeta)$ be a standard probability space. The \emph{Bernoulli shift} over $G$ with base space $(Z,\zeta)$ is the Borel p.m.p.\ action of $G$ on $(Z,\zeta)^\Omega$ defined for every $g\in G$ and $(z_\omega)_{\omega\in\Omega}\in Z^\Omega$ by
\[g\cdot (z_\omega)_{\omega\in\Omega}=(z_{g\inv(\omega)})_{\omega\in\Omega}.\]

For every subset  $A\subseteq\Omega$, let $\mathrm{proj}_A\colon Z^\Omega\to Z^A$ be the restriction map and let $\sigma(\mathrm{proj}_A)$ be the $\sigma$-algebra generated by $\mathrm{proj}_A$.

\begin{lemma}\label{lem.claim}
    Let $G\leq\Sym(\Omega)$ be a closed permutation group without algebraicity and let $G\curvearrowright(Z,\zeta)^\Omega$ be the Bernoulli shift. For every finite subset $A\subseteq\Omega$, the $\sigma$-algebra $\FF_A$ of $G_A$-invariant subsets coincides (up to null sets) with $\sigma(\mathrm{proj}_A)$. Consequently, $G\curvearrowright(Z,\zeta)^\Omega$ is dissociated.
\end{lemma}

\begin{proof}
Fix $A\subseteq\Omega$ finite. For every Borel subset $Y$ of $Z^A$, it is clear that $\mathrm{proj}_A\inv(Y)$ is $G_A$-invariant. So the $\sigma$-algebra generated by $\mathrm{proj}_A$ is contained in $\FF_A$. For the converse, fix $Y\in\FF_A$. Fix $H$ a countable dense subgroup of $G_A$ and let $Y_0\subseteq Y$ be a subset of full measure such that for every $h\in H$, the set $h\cdot Y_0$ is equal to $Y_0$ (here this is a true equality, not only up to null sets). For every $x\in Z^A$, define
\[Y_0^x\coloneqq\{\mathrm{proj}_{A^c}((z_n)_{n\geq 0})\colon (z_n)_{n\geq 0}\in Y_0\text{ and }(z_a)_{a\in A}=x\}.\]
Then $Y_0^x$ is invariant under $H$ for every $x \in Z^A$. Since $H$ is dense in $G_A$, they have the same orbits on $\Omega\setminus A$. Since $G$ has no algebraicity, all of these orbits are infinite. This implies that $H$ acts ergodically on $(Z,\zeta)^{\Omega\setminus A}$ (see e.g.\ \cite[Prop.~2.1]{KechrisTsankov} for a proof). Thus, $\zeta^{\Omega\setminus A}(Y_0^x)\in\{0,1\}$ for every $z\in Z$. If $E$ denotes the set of $x\in Z^A$ such that $\zeta^{\Omega\setminus A}(Y_0^x)=1$, then $Y_0\triangle\mathrm{proj}_A\inv(E)$ has measure $0$. This shows the second inclusion.

To prove dissociation of the Bernoulli shift, take $A,B\subseteq\Omega$ finite. By the previous argument, $\FF_A$, $\FF_B$ and $\FF_{A\cap B}$ coincide (up to null sets) respectively with the $\sigma$-algebras generated by $\mathrm{proj}_A$, $\mathrm{proj}_B$ and $\mathrm{proj}_{A\cap B}$. But it is clear that $\sigma(\mathrm{proj}_A)$ and $\sigma(\mathrm{proj}_B)$ and independent conditionally on $\sigma(\mathrm{proj}_{A\cap B})$, which shows that the Bernoulli shift $G\curvearrowright(Z,\zeta)^\Omega$ is dissociated. 
\end{proof}

\begin{corollary}\label{cor.deFinetti}
    Let $G\leq\Sym(\Omega)$ be a transitive, closed permutation group without algebraicity and let $Z$ be a standard Borel space. Let $\mu$ be a $G$-invariant, Borel, ergodic probability measure on $Z^\Omega$. Then $\mu$ is dissociated if and only if $\mu=\lambda^{\otimes\Omega}$ for some Borel probability measure $\lambda$ on $Z$.
\end{corollary}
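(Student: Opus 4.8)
The plan is to prove the two implications separately, the implication from i.i.d.\ to dissociation being immediate and the converse carrying the de Finetti content. One direction is straightforward: if $\mu=\lambda^{\otimes\Omega}$ for some Borel probability measure $\lambda$ on $Z$, then $G\curvearrowright(Z^\Omega,\mu)$ is precisely the Bernoulli shift over $G$ with base space $(Z,\lambda)$, which is dissociated by Lemma \ref{lem.Bernoulli.dissociated}. It is here that the hypothesis that $G$ has no algebraicity is genuinely needed.

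The substantial direction is the converse, so assume the action is dissociated. For a finite subset $A\subseteq\Omega$, write $\mathrm{proj}_A\colon Z^\Omega\to Z^A$ for the restriction map and $\sigma(\mathrm{proj}_A)$ for the $\sigma$-algebra it generates. As in the proof of Lemma \ref{lem.Bernoulli.dissociated}, a preimage $\mathrm{proj}_A\inv(Y)$ is invariant under the pointwise stabilizer $G_A$, since any $g\in G_A$ fixes the coordinates indexed by $A$; hence $\sigma(\mathrm{proj}_A)\subseteq\FF_A$. The key observation is that when $A$ and $B$ are disjoint, one has $A\cap B=\emptyset$, and $\FF_\emptyset$ is exactly the $\sigma$-algebra of $G$-invariant sets, which is trivial because $\mu$ is ergodic. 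Dissociation then asserts that $\FF_A$ and $\FF_B$ are independent conditionally on the trivial $\sigma$-algebra, i.e.\ genuinely independent; restricting to the sub-$\sigma$-algebras $\sigma(\mathrm{proj}_A)$ and $\sigma(\mathrm{proj}_B)$, this shows that the coordinate blocks $(z_\omega)_{\omega\in A}$ and $(z_\omega)_{\omega\in B}$ are independent whenever $A\cap B=\emptyset$.

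From this I would extract both full mutual independence and identical distribution of the coordinates. Taking $A=\{a_1\}$ and $B=\{a_2,\dots,a_n\}$ for distinct $a_i$ and inducting on $n$ yields that $z_{a_1},\dots,z_{a_n}$ are mutually independent. For the identical distribution, transitivity of $G$ together with $G$-invariance of $\mu$ gives, for every $g\in G$, the identity $\mathrm{proj}_\omega\circ g=\mathrm{proj}_{g\inv\omega}$, whence $(\mathrm{proj}_\omega)_*\mu=(\mathrm{proj}_{g\inv\omega})_*\mu$; by transitivity all one-dimensional marginals coincide with a single law $\lambda$ on $Z$. Combining, every finite-dimensional marginal of $\mu$ equals the product $\lambda^{\otimes A}$, and since the cylinder sets generate the Borel $\sigma$-algebra of $Z^\Omega$, a standard monotone-class argument upgrades this to $\mu=\lambda^{\otimes\Omega}$.

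The only real subtlety, and the step I would check most carefully, is the reduction from conditional to genuine independence: it rests on identifying $\FF_\emptyset$ with the invariant $\sigma$-algebra and invoking ergodicity, and on the fact that independence of $\FF_A$ and $\FF_B$ descends to their sub-$\sigma$-algebras $\sigma(\mathrm{proj}_A)$ and $\sigma(\mathrm{proj}_B)$. Everything else, namely the induction to mutual independence and the passage from finite-dimensional marginals to the product measure, is routine.
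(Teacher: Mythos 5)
Your proof is correct and follows the route the paper intends: the implication from $\mu=\lambda^{\otimes\Omega}$ to dissociation is exactly Lemma \ref{lem.Bernoulli.dissociated}, and your converse (dissociation plus ergodicity gives genuine independence of $\FF_A$ and $\FF_B$ for disjoint finite $A,B$ since $\FF_\emptyset$ is the trivial invariant $\sigma$-algebra, then transitivity identifies the marginals and a monotone-class argument concludes) is the standard argument the paper leaves implicit when it calls the corollary a direct consequence of that lemma. The two subtleties you flag -- identifying $\FF_\emptyset$ with the invariant $\sigma$-algebra and restricting independence to the sub-$\sigma$-algebras $\sigma(\mathrm{proj}_A)\subseteq\FF_A$ -- both check out.
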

\begin{proof}
Suppose $\mu$ is a $G$-invariant ergodic Borel probability measure on $Z^\Omega$, which is dissociated. By $G$-invariance of $\mu$ and transitivity of $G\curvearrowright \Omega$, the one-dimensional marginals of $\mu$ are all equal. Let $\zeta$ denote one of them. Then for every measurable $A_1,\dots,A_n\subseteq Z$, dissociation yields that $\mu(A_1\times\dots\times A_n\times Z\times ...)=\zeta(A_1)\dots\zeta(A_n)$. Therefore $\mu$ is a product measure. The converse is given by Lemma \ref{lem.claim}.
\end{proof}

Consequently, de Finetti's theorem \cite{deFinetti} can be considered as the first historical instance of dissociation, for product actions of the group $\Sym(\Omega)$. Indeed, by Corollary \ref{cor.deFinetti}, de Finetti's theorem is equivalent to saying that for every $\Sym(\Omega)$-invariant ergodic probability measure on $Z^\Omega$, the Borel p.m.p.\ action $\Sym(\Omega)\curvearrowright (Z^\Omega,\mu)$ is dissociated. A similar result has been obtained by Ryll-Nardzewski for the group $\Aut(\Q,<)$ \cite{Ryll-Nardzewski}. It was later generalized to a large class of closed permutation groups, see \cite{JT} and \cite{tsankov2024nonsingularprobabilitymeasurepreservingactions}. 

\section{How to obtain dissociation}\label{sec.uniform.non.algebraicity}

\subsection{Via approximating sequences}\label{sec.approx}

In this section, we recall a method to obtain dissociation that we introduced in \cite{BJJ24} for studying the automorphism group of the rational Urysohn space. 

Let $\Omega_0\subseteq\Omega_1$ be two countably infinite sets and let $G_0, G_1$ be closed permutation groups over $\Omega_0$ and $\Omega_1$ respectively. An \emph{extension embedding} of $G_0$ into $G_1$ is an embedding of topological groups $\theta \colon G_0\hookrightarrow G_1$ such that for every $g\in G_0$ and $x\in\Omega_0$, we have $\theta(g)(x)=g(x)$. 

Given $G\leqslant \Sym(\Omega)$ a closed permutation group, an \textit{approximating sequence} for $G$ is the data of an increasing sequence $\Omega_0\subseteq\Omega_1\subseteq\dots\subseteq\Omega$ of infinite subsets with $\Omega=\bigcup_{n\geq 0}\Omega_n$, a sequence of closed permutation groups $G_n\leq\Sym(\Omega_n)$ and a sequence of extension embeddings $\theta_n \colon G_n\hookrightarrow G_{n+1}$ such that $\bigcup_{n\geq 0}\iota_n(G_n)$ is a dense subgroup of $G$. Here, $\iota_n \colon G_n\hookrightarrow\Sym(\Omega)$ denotes the natural extension embedding obtained by composing the extension embeddings $\theta_n$.

\begin{theorem}[{\cite[Thm.~3.12]{BJJ24}}]\label{thm.approx.sequence} 
Let $G\leq\Sym(\Omega)$ be a closed permutation group with an approximating sequence $G_0\hookrightarrow G_1\hookrightarrow\dots\hookrightarrow G$. Assume that for every $n\geq 0$, the closed permutation group $G_n\leq\Sym(\Omega_n)$ is dissociated. Then the closed permutation group $G\leq\Sym(\Omega)$ is dissociated. 
\end{theorem}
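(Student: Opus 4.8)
The plan is to show directly that every unitary representation of $G$ is dissociated; by Theorem \ref{thm.intro.equivalence.dissociation} this suffices to conclude that $G$ is dissociated. So fix a unitary representation $\pi\colon G\to\UU(\HH)$ and two finite subsets $A,B\subseteq\Omega$; the goal is the projection identity $p_Ap_B=p_{A\cap B}$. Since $A\cup B$ is finite and $\Omega=\bigcup_n\Omega_n$ is an increasing union, there is some $N$ with $A\cup B\subseteq\Omega_N$, and hence $A,B,A\cap B\subseteq\Omega_n$ for all $n\geq N$. For each such $n$ I would consider the unitary representation $\pi_n\coloneqq\pi\circ\iota_n\colon G_n\to\UU(\HH)$ of the approximating group $G_n$, and for a finite $C\subseteq\Omega_n$ I would write $\HH_C^{(n)}\subseteq\HH$ for its subspace of $(G_n)_C$-invariant vectors, with orthogonal projection $p_C^{(n)}$.

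The heart of the argument is the identification of each ambient invariant subspace $\HH_C$ (for $C\in\{A,B,A\cap B\}$) as a decreasing limit of the approximating ones. First I would record the elementary but crucial fact that, because $\iota_n(g)$ restricts to $g$ on $\Omega_n$, one has $\iota_n(G_n)\cap G_C=\iota_n((G_n)_C)$ for every $C\subseteq\Omega_n$. Since the union $\bigcup_n\iota_n(G_n)$ is increasing and dense in $G$, and since $G_C$ is an open subgroup, intersecting a dense set with an open set shows that $\bigcup_{n\geq N}\iota_n((G_n)_C)$ is dense in $G_C$. Strong continuity of $\pi$ then forces a vector to be $G_C$-invariant precisely when it is $\iota_n((G_n)_C)$-invariant for all $n\geq N$, that is $\HH_C=\bigcap_{n\geq N}\HH_C^{(n)}$. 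Using $\iota_n=\iota_{n+1}\circ\theta_n$ together with $\theta_n((G_n)_C)\subseteq(G_{n+1})_C$, the subspaces $\HH_C^{(n)}$ are seen to decrease with $n$, so this is a decreasing intersection and the projections converge strongly: $p_C^{(n)}\to p_C$ in the strong operator topology.

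With this in hand the conclusion is immediate. For every $n\geq N$ the representation $\pi_n$ is a unitary representation of the dissociated group $G_n$, hence is itself dissociated, so the defining identity $p_A^{(n)}p_B^{(n)}=p_{A\cap B}^{(n)}$ holds (note $A\cap B$ is computed identically whether in $\Omega_n$ or in $\Omega$). Letting $n\to\infty$ and using that a product of two strongly convergent sequences of contractions converges strongly to the product of the limits, the left-hand side converges to $p_Ap_B$ and the right-hand side to $p_{A\cap B}$, yielding $p_Ap_B=p_{A\cap B}$. As $\pi$, $A$ and $B$ were arbitrary, $\pi$ is dissociated and therefore $G$ is dissociated.

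I expect the main obstacle to be the second paragraph: correctly matching the pointwise stabilizers across the tower, i.e.\ proving $\HH_C=\bigcap_{n\geq N}\HH_C^{(n)}$. The two ingredients one must keep separate are the density transfer from $G$ to the open subgroup $G_C$ and the restriction identity $\iota_n(G_n)\cap G_C=\iota_n((G_n)_C)$, which is exactly where the defining property of the extension embeddings — that $\iota_n(g)$ agrees with $g$ on $\Omega_n$ — is used. Everything downstream, namely the monotonicity giving strong convergence of the projections and the passage to the limit in the product, is routine Hilbert space bookkeeping.
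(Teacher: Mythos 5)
Your argument is correct and follows essentially the same route as the proof of Theorem 3.12 in \cite{BJJ24}, which the present paper cites rather than reproves: one identifies $\HH_C$ as the decreasing intersection of the spaces of $\iota_n((G_n)_C)$-invariant vectors, using the restriction identity $\iota_n(G_n)\cap G_C=\iota_n((G_n)_C)$ together with density of $\bigcup_n\iota_n(G_n)$ intersected with the open subgroup $G_C$, and then passes to the strong limit in the projection identity $p_A^{(n)}p_B^{(n)}=p_{A\cap B}^{(n)}$. I see no gaps.
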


\begin{remark}
    Extension embeddings are a stronger version of what are called extensive embedding in \cite{KSW}. Indeed, we do ask for more than simply extensivity, as we require our embeddings to be topological. Yet another related notion appears in \cite{BartosKubis}, where they discuss extensibility of structures, however the notion of extension embedding they use for their result is even less restrictive than the one in \cite{KSW}. The topological version we use is necessary for the dynamical results we obtain.
\end{remark}

\subsection{Via an amalgamation property}

The methods presented in this section are heavily inspired from Section 4 in \cite{tsankov2024nonsingularprobabilitymeasurepreservingactions}.

\subsubsection{Tail subspaces and dissociation}

In this section, we introduce for every finite subset $A\subseteq\Omega$ a \textit{tail subspace} $\TT_A$ associated with a unitary representation of a closed permutation group $G\leq\Sym(\Omega)$. Our definition is inspired by the classical definition of the tail $\sigma$-algebra for an exchangeable random process, see Remark \ref{rem.tail.sigma.algebra}. The key idea in this section is to obtain dissociation by proving that the tail space $\TT_A$ coincides with the space of $G_A$-invariant vectors. 

\begin{definition}
    Let $G\leq\Sym(\Omega)$ be a closed permutation group and $A \subseteq \Omega$ a finite subset. The \textit{tail subspace} $\TT_A$ associated with a unitary representation $\pi \colon G\to\UU(\HH)$ is defined by 
    \[\TT_A\coloneqq \bigcap_{\substack{C\subseteq\Omega \text{ cofinite}\\ A \subseteq C}}\HH_C.\]
\end{definition}

Note that by definition, we always have $\HH_A\subseteq\TT_{A}$. In the sequel, we will need a finer notion of conditional orthogonality than the one we present in the introduction. Let $\HH$ be a Hilbert space. For any closed subspace $\GG\subseteq \HH$, we denote by $p_\GG$ the orthogonal projection onto $\GG$. 

\begin{definition}
    Let $\GG,\KK,\LL$ be three closed subspaces of $\HH$. We say that $\GG$ and $\LL$ are orthogonal conditionally on $\KK$ if one of the following equivalent assertions is satisfied:
\begin{enumerate}[label=(\roman*)]
   \item $p_{\KK^\bot}\GG\bot p_{\KK^\bot}\LL$.
    \item $p_{\GG}p_{\KK}p_{\LL}=p_{\GG}p_{\LL}$. 
    \item $p_{\LL}p_{\KK}p_{\GG}=p_{\LL}p_{\GG}$
 \end{enumerate}
If this holds, we write $\GG\bot_\KK\LL$. 
\end{definition}
 Notice that if $\KK\subseteq\GG\cap\LL$, then we recover the definition given in the introduction: $\GG\bot_{\KK}\LL$ is equivalent to $p_{\GG}p_{\LL}=p_{\LL}p_{\GG}=p_{\KK}$. In this case, we readily have that $\KK=\GG\cap\LL$.

The following theorem provides a weak form of dissociation.

\begin{theorem}
    Let $G\leq \Sym(\Omega)$ be a closed subgroup without algebraicity. Let $\pi \colon G\to\UU(\HH)$ be a unitary representation. Then $\HH_A\bot_{\TT_{A\cap B}}\HH_B$ for all finite subsets $A,B\subseteq\Omega$. 
\end{theorem}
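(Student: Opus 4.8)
The plan is to unwind the conditional orthogonality into a single scalar identity and then prove that identity by pushing the part of $A$ lying outside $B$ off to infinity, exploiting a weak-compactness argument. Write $D\coloneqq A\cap B$ and, for $\xi\in\HH_A$, set $\zeta\coloneqq\xi-p_{\TT_D}\xi\in\TT_D^\bot$. Using the characterization $\GG\bot_\KK\LL\Leftrightarrow p_{\KK^\bot}\GG\bot p_{\KK^\bot}\LL$ from the definition of conditional orthogonality, and the fact that $\zeta\in\TT_D^\bot$ while $p_{\TT_D}\eta\in\TT_D$, one computes $\langle p_{\TT_D^\bot}\xi,p_{\TT_D^\bot}\eta\rangle=\langle\zeta,\eta\rangle-\langle\zeta,p_{\TT_D}\eta\rangle=\langle\zeta,\eta\rangle$. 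Hence $\HH_A\bot_{\TT_D}\HH_B$ reduces to proving $\langle\zeta,\eta\rangle=0$ for all $\xi\in\HH_A$ and $\eta\in\HH_B$.

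First I would record two preliminary facts. Since $\pi(g)\HH_C=\HH_{gC}$ (the equivariance already noted for the projection-valued measure $p_\pi$) and the assignment $C\mapsto gC$ permutes the cofinite supersets of $D$ for every $g\in G_D$, the tail subspace $\TT_D$ is $G_D$-invariant; as $G_B\le G_D$, it is in particular $G_B$-invariant, so $p_{\TT_D}$ (and $p_{\TT_D^\bot}$) commutes with $\pi(g)$ for all $g\in G_B$. Second, I would build a sequence $(g_n)$ in $G_B$ driving $A\setminus B$ to infinity: because $G$ has no algebraicity, the orbits of $G_B$ on $\Omega\setminus B$ are all infinite, so Neumann's separation lemma \cite[Thm.~1]{Birch_1976} provides, for each finite $F\subseteq\Omega$, an element $g\in G_B$ with $g(A\setminus B)\cap F=\emptyset$; exhausting $\Omega\setminus B$ by finite sets yields $g_n\in G_B$ such that $g_n(A\setminus B)$ eventually avoids every finite subset of $\Omega$.

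The heart of the argument is the following. Fix $\xi\in\HH_A$ and $\eta\in\HH_B$. Passing to a subsequence by weak compactness of bounded sets, assume $\pi(g_{n_k})\xi\rightharpoonup v$. I claim $v\in\TT_D$: given any cofinite $C\supseteq D$, the set $\Omega\setminus C$ is finite, so for $k$ large $g_{n_k}(A\setminus B)\subseteq C$, and since $D\subseteq C$ we get $g_{n_k}A=D\cup g_{n_k}(A\setminus B)\subseteq C$, whence $\pi(g_{n_k})\xi\in\HH_{g_{n_k}A}\subseteq\HH_C$; as $\HH_C$ is weakly closed, $v\in\HH_C$, and intersecting over all such $C$ gives $v\in\TT_D$. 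Applying the bounded operator $p_{\TT_D^\bot}$ and commuting it past each $\pi(g_{n_k})$, we obtain $\pi(g_{n_k})\zeta=p_{\TT_D^\bot}\pi(g_{n_k})\xi\rightharpoonup p_{\TT_D^\bot}v=0$. On the other hand, because $g_{n_k}^{-1}\in G_B$ and $\eta$ is $G_B$-invariant, $\langle\pi(g_{n_k})\zeta,\eta\rangle=\langle\zeta,\pi(g_{n_k}^{-1})\eta\rangle=\langle\zeta,\eta\rangle$ for every $k$. Letting $k\to\infty$ forces $\langle\zeta,\eta\rangle=0$, which is exactly what was needed.

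I expect the main obstacle to be the claim that the weak cluster point $v$ lands in the tail subspace $\TT_D$; everything hinges on the observation that translating $A$ by $g_n\in G_B$ keeps $D$ fixed while sweeping the remaining coordinates out of any prescribed cofinite set, so that $\pi(g_{n_k})\xi$ eventually belongs to each $\HH_C$. The construction of the escaping sequence is routine once no algebraicity is invoked, and the final cancellation is immediate after noticing that $g_n\in G_B$ makes the matrix coefficient $\langle\pi(g_n)\zeta,\eta\rangle$ constant in $n$.
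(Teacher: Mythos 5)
Your proof is correct, and while it runs on the same combinatorial engine as the paper's — no algebraicity plus Neumann's lemma produce $g_n\in G_B$ that fix $A\cap B$ pointwise and sweep $A\setminus B$ into any prescribed cofinite set, so that $\pi(g_n)\HH_A\subseteq\HH_C$ eventually while $\pi(g_n)$ acts trivially on $\HH_B$ — it closes the argument by a genuinely different functional-analytic mechanism. The paper introduces the auxiliary subspace $\KK'=\overline{\mathrm{Vect}(\HH_A,\TT_{A\cap B})}$, observes that $\pi(g_n)\KK'\subseteq\HH_{C_n}$ for a decreasing chain of cofinite sets $C_n$ with $\bigcap_n C_n=A\cap B$, and concludes from the norm inequality $\lVert p_{\KK'}\xi\rVert\leq\lVert p_{\HH_{C_n}}\xi\rVert$ for $\xi\in\HH_B$ together with the strong convergence $p_{\HH_{C_n}}\to p_{\TT_{A\cap B}}$. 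You instead reduce conditional orthogonality to the scalar identity $\langle p_{\TT_{A\cap B}^\bot}\xi,\eta\rangle=0$, extract a weak cluster point $v$ of $(\pi(g_n)\xi)$, place $v$ in $\TT_{A\cap B}$ using weak closedness of each $\HH_C$, and finish via the constancy of the matrix coefficient $\langle\pi(g_n)\zeta,\eta\rangle$. Your route requires weak sequential compactness and the commutation of $p_{\TT_{A\cap B}}$ with $\pi(G_{A\cap B})$, both of which you justify correctly; the paper's route needs only the elementary fact that projections onto a decreasing sequence of subspaces converge strongly to the projection onto the intersection. The two are of comparable difficulty, but your reduction dispenses with the auxiliary space $\KK'$ entirely and reads as a mixing-type argument on matrix coefficients, which makes the geometric content somewhat more transparent.
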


\begin{proof}
Fix $A,B\subseteq\Omega$ finite and let $\GG\coloneqq\HH_A$, $\KK=\TT_{A\cap B}$ and $\LL=\HH_B$. Let us prove that $p_{\GG}p_{\KK}p_{\LL}=p_{\GG}p_{\LL}$. Define $\KK'=\overline{\text{Vect}(\GG,\KK)}$. Notice that if $p_{\KK'}p_{\LL}=p_{\KK}p_{\LL}$, then $\GG\bot_{\KK}\LL$. Indeed 
\begin{align*}
 (p_{\KK'}p_{\LL}=p_{\KK}p_{\LL}) &\Rightarrow (p_\GG p_\KK p_\LL=p_\GG p_{\KK'}p_\LL =p_\GG p_{\LL} ) \\
    & \Rightarrow \GG\bot_\KK\LL.
\end{align*} 
where the rightmost equality holds because $\GG\subseteq\KK'$. So let us prove that for all $\xi\in\LL$,
\begin{align}\label{eqn.independenceovertail}
         p_{\KK'}\xi =p_{\KK}\xi. 
     \end{align} 
     Since $\KK$ is a subspace of $\KK'$, we have $\lVert p_{\KK}\xi\rVert\leq \lVert p_{\KK'}\xi\rVert$ and \eqref{eqn.independenceovertail} is equivalent to $\lVert p_{\KK'}\xi\rVert=\lVert p_{\KK}\xi\rVert$. We thus prove the reverse inequality.
     
     Fix a strictly decreasing sequence $C_0\supseteq C_1\supseteq \dots$ of cofinite subsets of $\Omega$ whose intersection $\bigcap_{n\geq 0}C_n$ is $A\cap B$. For every $n\geq 0$, let $\KK_n \coloneqq \HH_{C_n}$. Then $(\KK_n)_{n\geq 0}$ forms a decreasing sequence of closed subspaces. Moreover, the intersection $\bigcap_{n\geq 0}\KK_n$ is equal to $\KK$ and $p_{\KK_{n}}\to p_{\KK}$ in the strong operator topology. Since $G$ has no algebraicity, Neumann's Lemma \cite[Thm.~1]{Birch_1976} gives for every $n\geq 0$ some element $g_n\in G_B$ such that $g_n(A)$ is contained in $C_n$. In particular, $\pi(g_n)\GG=\pi(g_n)\HH_A=\HH_{g_n(A)}$ is contained in $\KK_{n}$. 
    
    For all $n\geq 0$, define $\KK_n'\coloneqq \pi(g_n)\KK'$. Notice that \begin{align*}
        \KK_n'&=\overline{\text{Vect}(\pi(g_n)\GG,\pi(g_n)\KK)}\\
            &=\overline{\text{Vect}(\HH_{g_n(A)},\KK)},
    \end{align*}
    where the last equality holds since $\KK$ is a $G_{A\cap B}$-invariant subspace and $g_n\in G_B\leq G_{A\cap B}$. Therefore, $\KK_n'$ is a subspace of $\KK_{n}$. For all $\xi \in\LL$ and $n\geq 0$, we have
     \[
         \lVert p_{\KK'}\xi\rVert  = \lVert \pi(g_n)p_{\KK'}\xi\rVert = \lVert  p_{\KK_n'}\pi(g_n)\xi\rVert  =\lVert p_{\KK_n'}\xi\rVert
         \leq \lVert p_{\KK_{n}}\xi \rVert \to \lVert p_{\KK}\xi\rVert,
     \]
     which concludes the proof.
\end{proof}

Therefore, if $\TT_A=\HH_A$ for every finite $A\subseteq\Omega$, then the unitary representation is dissociated. We can actually prove the converse of this statement, which will allow us to get a nice equivalence in Corollary \ref{cor.tail.dissociation.equivalence}. 

\begin{lemma}
    Let $G\leq\Sym(\Omega)$ be a closed permutation group. Let $\pi \colon G\to\UU(\HH)$ be a dissociated unitary representation. Then for every $A\subseteq\Omega$ finite, we have $\TT_A=\HH_A$. 
\end{lemma}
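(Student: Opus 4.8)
The inclusion $\HH_A \subseteq \TT_A$ comes for free: for every cofinite $C$ with $A \subseteq C$ one has $G_C \leq G_A$, hence $\HH_A \subseteq \HH_C$, and intersecting over all such $C$ gives $\HH_A \subseteq \TT_A$. Only the reverse inclusion $\TT_A \subseteq \HH_A$ uses dissociation, and that is where I would put all the work.

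The plan is to use dissociation in the strong form recalled in the introduction, namely $p_C p_D = p_{C \cap D}$ for \emph{arbitrary} subsets $C, D \subseteq \Omega$, pairing an infinite (cofinite) set against a finite one. Concretely, I fix $\xi \in \TT_A$ and, for each finite $D \supseteq A$, set $C \coloneqq \Omega \setminus (D \setminus A)$. Then $C$ is cofinite, $A \subseteq C$, and a one-line check gives $C \cap D = A$. Since $C$ is cofinite and contains $A$, the tail vector satisfies $p_C \xi = \xi$, so dissociation yields
$$p_D \xi = p_D p_C \xi = p_{C \cap D}\xi = p_A \xi.$$
Thus $p_D \xi = p_A \xi$ for every finite $D$ with $A \subseteq D$.

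The final step is to let $D$ exhaust $\Omega$: the projections $p_D$, indexed by the finite supersets of $A$ ordered by inclusion, form an increasing net converging in the strong operator topology to the projection $p_\Omega$ onto $\HH_\Omega = \overline{\bigcup_{B \text{ finite}} \HH_B}$. Passing to the limit in the displayed identity gives $p_\Omega \xi = p_A \xi$. Since $\xi \in \HH_C \subseteq \HH_\Omega$ for any cofinite $C \supseteq A$, we have $p_\Omega \xi = \xi$, and therefore $\xi = p_A \xi \in \HH_A$, which closes the argument.

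I expect the only genuinely delicate point to be this last observation: one must notice that a tail vector already lies in $\HH_\Omega$, so that the strong limit $p_\Omega \xi$ recovers $\xi$ and not a strictly smaller projection. Without it, the manipulation would only deliver $p_\Omega \xi = p_A \xi$. Everything else is formal bookkeeping with the commuting family of projections furnished by dissociation; the one conceptual ingredient is the decision to apply dissociation to the infinite set $C$ against the finite set $D$, which is exactly the flexibility granted by the ``not necessarily finite'' reformulation stated in the introduction.
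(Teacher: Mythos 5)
Your proof is correct and follows essentially the same route as the paper's: both pair a cofinite set against a finite set meeting it exactly in $A$, invoke the extended form of dissociation $p_Cp_D=p_{C\cap D}$ for not-necessarily-finite sets, and pass to a strong-operator limit as the finite set exhausts $\Omega$. The only cosmetic difference is that you fix $\xi\in\TT_A$ and use $p_C\xi=\xi$ directly, whereas the paper takes the operator limit $p_{C_n}p_{B_n}\to p_{\TT_A}$ along a sequence $C_n\downarrow A$ with $B_n=A\cup(\Omega\setminus C_n)$.
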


\begin{proof}
Let $A\subseteq\Omega$ be a finite subset. Let $(C_n)$ be a decreasing sequence of cofinite subsets of $\Omega$ such that $\bigcap_{n\in \N} C_n = A$. Then $\bigcap_{n\in \N} \HH_{C_n} = \TT_A$ and $p_{C_n} \to p_{\TT_A}$ in the strong operator topology.

Define $B_n \coloneqq A \cup \left(\Omega\setminus C_n\right)$ for every $n\in \N$. Then $(B_n)$ is an increasing sequence of finite sets whose union is $\Omega$. By continuity of $\pi$, we get that $p_{B_n} \to \operatorname{id}$ in the strong operator topology. Moreover, $B_n \cap C_n = A$ for every $n \in \N$. Thus, using dissociation, we have
$p_{C_n}p_{B_n} = p_A$ for every $n\in \N$. Note that $p_{C_n}p_{B_n} \to p_{\TT_A}$ in the strong operator topology (these operators being projections, they are uniformly bounded). At the limit, we get $p_{\TT_A} = p_A$ and $\TT_A = \HH_A$.
\end{proof}

\begin{corollary}\label{cor.tail.dissociation.equivalence}
    Let $G\leq\Sym(\Omega)$ be a closed permutation group without algebraicity. Let $\pi \colon G\to\UU(\HH)$ be a unitary representation. Then $\pi$ is dissociated if and only if for every finite subset $A\subseteq\Omega$, we have $\HH_A=\TT_A$.
\end{corollary}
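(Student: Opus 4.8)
The plan is to read this corollary as the conjunction of the two results just established and to check that nothing further is required. The forward implication---that dissociation of $\pi$ forces $\HH_A = \TT_A$ for every finite $A \subseteq \Omega$---is precisely the content of the lemma immediately preceding the corollary, so I would simply invoke it verbatim.

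For the reverse implication, I would fix arbitrary finite subsets $A, B \subseteq \Omega$ and feed them into the theorem proved earlier in this subsection. Since $G$ has no algebraicity, that theorem applies and delivers the conditional orthogonality $\HH_A \bot_{\TT_{A\cap B}} \HH_B$. Now $A \cap B$ is itself a finite subset, so the standing hypothesis $\HH_C = \TT_C$ may be specialized to $C = A\cap B$, giving $\TT_{A\cap B} = \HH_{A\cap B}$. Substituting this equality into the conditioning subspace turns the conclusion of the theorem into $\HH_A \bot_{\HH_{A\cap B}} \HH_B$.

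It then only remains to recast this relation as the projection identity defining dissociation. Because $A \cap B \subseteq A$ and $A \cap B \subseteq B$, we have the inclusions of pointwise stabilizers $G_A, G_B \leq G_{A\cap B}$, so every $G_{A\cap B}$-invariant vector is a fortiori both $G_A$- and $G_B$-invariant; hence $\HH_{A\cap B} \subseteq \HH_A \cap \HH_B$. This is exactly the hypothesis under which the remark following the finer definition of conditional orthogonality applies: when the conditioning subspace is contained in the intersection of the two others, $\GG \bot_\KK \LL$ is equivalent to $p_\GG p_\LL = p_\KK$. Taking $\GG = \HH_A$, $\LL = \HH_B$, and $\KK = \HH_{A\cap B}$, this yields $p_A p_B = p_{A\cap B}$. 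As $A$ and $B$ were arbitrary finite subsets, $\pi$ is dissociated.

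I do not expect any genuine obstacle, since both directions are immediate consequences of the material developed just above. The only point requiring a little care is making sure the correct convention for $\bot_\KK$ is being used, and this is settled once and for all by the inclusion $\HH_{A\cap B} \subseteq \HH_A \cap \HH_B$, which reduces the finer notion of conditional orthogonality to the elementary one from the introduction.
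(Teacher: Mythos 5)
Your proof is correct and matches the paper's intended argument exactly: the forward direction is the preceding lemma, and the converse is obtained by substituting $\TT_{A\cap B}=\HH_{A\cap B}$ into the conditional-orthogonality theorem and using $\HH_{A\cap B}\subseteq\HH_A\cap\HH_B$ to reduce to the projection identity $p_Ap_B=p_{A\cap B}$. The paper leaves this corollary as an immediate consequence of the two results you cite, so nothing further is needed.
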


\begin{remark}\label{rem.tail.sigma.algebra}
    Every result obtained in this section has an analogue in the context of Boolean p.m.p.\ actions. Let $G$ be a closed subgroup of $\Sym(\Omega)$ and $G\to\Aut(X,\mu)$ be a Boolean p.m.p.\ action. For every finite subset $A\subseteq\Omega$, define the following $\sigma$-algebra
    
    \[\TT_A\coloneqq\bigcap_{C\subseteq\Omega\text{ cofinite}}\sigma(\FF_{A\cup B}\colon B\subseteq C\text{ finite}).\] 
    When $A$ is the empty set, we call $\TT_\emptyset$ the tail $\sigma$-algebra of the action. One  checks that the proofs of this section can be adapted in a straightforward way to get analogous results in that context, leading to the following result: \emph{if $G$ has no algebraicity, then the Boolean p.m.p.\ action $\alpha$ is dissociated if and only if for every finite subset $A\subseteq\Omega$, we have $\FF_A=\TT_A$}.
    
    In particular, for a Boolean p.m.p.\ ergodic, dissociated action, the tail $\sigma$-algebra is trivial. Notice also that for $\Sym(\Omega)$ acting on a product probability space $(Z,\zeta)^\Omega$, triviality of the tail $\sigma$-algebra and of the $\sigma$-algebra of invariant subsets (a.k.a.\ exchangeable subsets in that context) correspond respectively to Kolmogorov and Hewitt-Savage $0-1$ laws.
\end{remark}

\subsubsection{Strong cofinite amalgamation over countable subsets}

Recall that, in the context of Fraïssé limits, the non-algebraicity condition we have been working with throughout this paper is equivalent to the Fraïssé class having the \textit{strong amalgamation property} (see Section 2.7 of \cite{Cameron}). We now introduce a reinforcement of this property by allowing infinite amalgamation bases and show it implies dissociation of the limit.

Let $\M$ be a countable relation structure and let $\sigma\mathrm{Age}(\M)$ be the set of isomorphism classes of countable (that is, finite or infinite) substructures of $\M$.

\begin{definition}\label{defn:SAPAS}
    We say that $\M$ has the \emph{strong cofinite amalgamation property over countable subsets} (abbrev.\ \emph{$\sigma$-SAP}) if, whenever $\A,\B_1,\B_2\in\sigma\mathrm{Age}(\M)$ and $f_i \colon \A\rightarrow \B_i$ are embeddings with $\lvert \dom(\B_i)\setminus \dom(f_i(\A))\rvert <+\infty$, there exist $\mathbf{C}\in\sigma\mathrm{Age}(\M)$ and embeddings $g_i \colon \B_i\rightarrow \mathbf{C}$ so that the following hold:
    \begin{enumerate}
        \item the diagram  \begin{tikzcd}
                      & \mathbf{C}                                      &                        \\
\B_1 \arrow[ru, "g_1"] &                                        & \B_2 \arrow[lu, "g_2"'] \\
                      & \A \arrow[lu, "f_1"] \arrow[ru, "f_2"'] &                       
\end{tikzcd} commutes,

\item for every $b_1\in \dom(\B_1)$ and $b_2\in\dom(\B_2)$ satisfying $g_1(b_1)=g_2(b_2)$, there exists $a\in\dom(\A)$ satisfying $f_1(a)=b_1$ and $f_2(a)=b_2$. 
    \end{enumerate}
\end{definition}

\begin{remark}\label{rem.omega.SAP.examples}
    For \emph{integral} metric spaces, there is a canonical amalgamation over a common non-empty subspace (of any cardinality!). Therefore, several natural Fraïssé limits built out of classes of finite integral metric spaces satisfy $\sigma$-SAP. This is for instance the case for the integral Urysohn space $\Z\U$, but other examples will be discussed in Sections \ref{sec.no.odd} and \ref{sec.no.simplex}. 

    However, it turns out that the rational Urysohn space $\Q\U$ does \emph{not} satisfy $\sigma$-SAP. Indeed, there exists an infinite subset $A\subseteq\dom(\Q\U)$ and a point $x\in \dom(\Q\U)\setminus A$ such that $d(A,x)=0$. Let $\A$ be the structure generated by $A$ and let $\B_1=\B_2$ be the structure generated by $A\sqcup\{x\}$. Then any amalgamation $\mathbf{C}$ would identify $\B_1\setminus\A$ and $\B_2\setminus\A$.
\end{remark}

\begin{remark}
    $\sigma$-SAP is closely related to \emph{uniform non-algebraicity} introduced by Tsankov in \cite{tsankov2024nonsingularprobabilitymeasurepreservingactions} to obtain de Finetti's style results. 
\end{remark}

Recall that a structure $\M$ is \textit{ultrahomogeneous} if every isomorphism between finite substructures of $\M$ extends to an automorphism of $\M$.

\setcounter{claim}{0}
\begin{proposition}\label{prop:SAOCSimpliesInvequalTail}
Let $\M$ be a countable relational ultrahomogeneous structure. Assume that $\M$ satisfies $\sigma$-SAP and that $\Aut(\M)$ weakly eliminates imaginaries. Then for every unitary representation $\pi\colon\Aut(\M)\to\UU(\HH)$ and every finite subset $A\subseteq \dom(\M)$, the subspaces $\HH_A$ and $\TT_A$ coincide.
\end{proposition}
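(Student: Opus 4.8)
The inclusion $\HH_A\subseteq\TT_A$ is immediate, so the entire content is the reverse inclusion $\TT_A\subseteq\HH_A$. I would first recast this as a self-contained orthogonality statement. Since the subspaces $\HH_B$, for $B\subseteq\dom(\M)$ finite, increase to $\HH$ (by continuity of $\pi$ at the identity, $p_{\HH_{B}}\to\mathrm{id}$ strongly as $B\nearrow\dom(\M)$), it suffices to show that every $\zeta\in\TT_A$ with $\zeta\perp\HH_A$ satisfies $\zeta\perp\HH_B$ for all finite $B\supseteq A$: taking $B\nearrow\dom(\M)$ then forces $\zeta=\lim_B p_{\HH_B}\zeta=0$, whence $\TT_A=\HH_A$. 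So I fix such a $\zeta$, a finite set $B=A\sqcup D$, and a vector $\eta\in\HH_B$, and the goal becomes $\langle\zeta,\eta\rangle=0$.

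The engine is a mixing/averaging argument fed by $\sigma$-SAP, and this is where strong \emph{cofinite} amalgamation over \emph{countable} sets is essential (rather than the finite amalgamation/Neumann input behind the conditional orthogonality over tail spaces established earlier). The point is that, given any finite $B'$ with $A\subseteq B'\subseteq\dom(\M)\setminus D$, one may amalgamate the countable configuration $B'$ with two finite extensions by disjoint copies of $D$; iterating this and transporting the amalgam back into $\M$ through the action of $\Aut(\M)$ (using weak elimination of imaginaries to control the relevant types and to ensure the partial maps extend to automorphisms), I would produce, for each $N$, automorphisms $g_1,\dots,g_N\in G_{B'}$ that fix $B'$ pointwise and send $D$ to pairwise disjoint sets $g_i(D)$, each disjoint from $D\cup B'$. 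Because $\zeta\in\TT_A$ lies in every $\HH_C$ with $C$ cofinite, for each $\varepsilon>0$ I may choose a finite $B'\subseteq\dom(\M)\setminus D$ with $A\subseteq B'$ and a vector $\zeta'\in\HH_{B'}$ with $\lVert\zeta-\zeta'\rVert<\varepsilon$. Since each $g_i$ fixes $B'$ pointwise, $\pi(g_i)\zeta'=\zeta'$, which yields the crucial approximate shift-invariance $\lvert\langle\zeta,\pi(g_i)\eta\rangle-\langle\zeta,\eta\rangle\rvert\le 2\varepsilon$ for every $i$.

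Summing over $i$ gives $N\lvert\langle\zeta,\eta\rangle\rvert\le\bigl\lvert\langle\zeta,\textstyle\sum_i\pi(g_i)\eta\rangle\bigr\rvert+2N\varepsilon\le\lVert\zeta\rVert\,\bigl\lVert\textstyle\sum_i\pi(g_i)\eta\bigr\rVert+2N\varepsilon$, so everything hinges on the estimate $\lVert\sum_i\pi(g_i)\eta\rVert=o(N)$. Here I would invoke the conditional orthogonality over tail spaces: writing $B''=B'\sqcup D$, the sets $g_i(B'')$ pairwise meet exactly in $B'$, so $\HH_{g_i(B'')}\perp_{\TT_{B'}}\HH_{g_j(B'')}$, which expresses each cross term $\langle\pi(g_i)\eta,\pi(g_j)\eta\rangle$ through $p_{\TT_{B'}}$ and reduces it to a quantity governed by $p_{\TT_{B'}}\eta$. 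The main obstacle is precisely to upgrade this \emph{conditional} (over the tail $\TT_{B'}$) orthogonality to genuine smallness of the off-diagonal sum. I expect to achieve this by enlarging $B'$: as $B'$ grows towards a cofinite set the tail defect $\TT_{B'}\ominus\HH_{B'}$ vanishes (for cofinite $B'$ one has $\TT_{B'}=\HH_{B'}$, since $B'$ itself is an admissible cofinite set in the defining intersection), so $p_{\TT_{B'}}\eta$ approaches $p_{\HH_{B'}}\eta$, and the latter tends to $p_{\HH_A}\eta=0$ because $B'\cap B=A$; this makes the cross terms negligible. Letting then $N\to\infty$ and $\varepsilon\to0$ forces $\langle\zeta,\eta\rangle=0$. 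The delicate part of the argument is exactly this simultaneous limit — sending $B'$ towards a cofinite set, $N$ to infinity, and $\varepsilon$ to zero in a compatible way — which is what allows the tail-level orthogonality supplied by $\sigma$-SAP to collapse $\TT_A$ onto $\HH_A$.
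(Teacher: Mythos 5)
Your reduction (show that $\zeta\in\TT_A\ominus\HH_A$ is orthogonal to every $\HH_B$, after replacing $\eta$ by $\eta-p_{\HH_A}\eta$) and your averaging setup are fine, but the argument does not close: the step that makes the off-diagonal terms small is circular. By conditional orthogonality over the tail, the cross terms equal $\langle p_{\TT_{B'}}\pi(g_i)\eta,\,p_{\TT_{B'}}\pi(g_j)\eta\rangle$, and since $\TT_{B'}$ is $G_{B'}$-invariant these are controlled by $\lVert p_{\TT_{B'}}\eta\rVert^2$. To make this small you invoke two facts: that $p_{\TT_{B'}}\eta$ approaches $p_{\HH_{B'}}\eta$, which is exactly the statement of the proposition applied to the finite set $B'$ in place of $A$, and that $p_{\HH_{B'}}\eta$ tends to $p_{\HH_A}\eta$ ``because $B'\cap B=A$'', which is literally the dissociation identity $p_{B'}p_{B}=p_{B'\cap B}$ that the whole section is trying to establish. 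There is also no coherent limit ``$B'$ towards a cofinite set'': $B'$ must stay finite (or at least co-infinite) for $G_{B'}$ to contain the elements $g_i$ moving $D$ to disjoint copies, and for each fixed finite $B'$ the ``tail defect'' $\TT_{B'}\ominus\HH_{B'}$ is not small-but-nonzero, it is precisely the unknown. A symptom of the problem is that your argument never actually amalgamates over an infinite base — all your amalgamations are over the finite set $B'$, i.e.\ ordinary strong amalgamation — whereas the cofinite-over-countable hypothesis is exactly what distinguishes, say, $\Z\U$ from $\Q\U$ (Remark \ref{rem.omega.SAP.examples}) and must enter essentially.

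The paper's proof avoids averaging altogether. It first reduces (Claim \ref{claim.TT_A}) to showing $\TT_A\cap\HH_B\subseteq\HH_A$ for $B$ finite. Given $\xi\in\TT_A\cap\HH_B$, tail membership provides approximants $\xi_n\in\HH_{A\cup D_n}$ with the $D_n$ contained in a cofinite set disjoint from $B$; one then applies $\sigma$-SAP to the \emph{countable} base $A\cup\bigcup_n D_n$ inside $B\cup\bigcup_n D_n$, producing a single mirror copy $B_2$ of $B_1=g_1(B)$ with $B_1\cap B_2=A'$ and, by ultrahomogeneity, automorphisms $h_n$ fixing $A'\cup D_n'$ pointwise and carrying $B_1$ to $B_2$. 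Because the $h_n$ fix every $\xi_n'$, the transported vector $\xi'$ satisfies $\lVert\xi'-\pi(h_n)\xi'\rVert\to 0$, hence lies in $\HH_{B_1}\cap\HH_{B_2}$, and weak elimination of imaginaries (via $\langle G_{B_1},G_{B_2}\rangle=G_{B_1\cap B_2}$, Lemma \ref{lem.caracterisation.noAlg.WEI}) collapses this to $\HH_{A'}$. If you want to salvage your approach, you need an independent, non-circular source of smallness for $\lVert p_{\TT_{B'}}\eta-p_{\HH_{B'}}\eta\rVert$; as written, the proposal assumes the conclusion.
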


\begin{proof}
    Let $G\coloneqq \Aut(\M)$ and $\Omega\coloneqq\dom(\M)$. Fix a unitary representation $\pi \colon G\to\UU(\HH)$ and a finite subset $A\subseteq\Omega$. The inclusion $\HH_A\subseteq\TT_A$ always holds, so let us prove the reverse inclusion. We start with the following claim.  
    
    \begin{claim}\label{claim.TT_A}
    $\displaystyle\TT_A=\overline{\bigcup_{\substack{B\subseteq\Omega \\\text{ finite }}}\TT_A\cap\HH_B}$. 
    \end{claim}
    \begin{cproof}
        Recall that the union of the $\HH_B$'s for $B\subseteq\Omega$ finite is dense in $\HH$ \cite[Lem.~2.6]{BJJ24}. Let $\xi\in\TT_A$. Then there exists a sequence of finite subsets $(B_n)_{n\geq 0}$ and a sequence of vectors $(\xi_n)_{n\geq 0}$ such that $\xi_n\in\HH_{B_n}$ and $\xi_n\to\xi$. Since $\HH_{B_n}\subseteq\HH_{A\cup B_n}$, we may assume that $A\subseteq B_n$ for every $n\geq 0$. Define $\eta_n\coloneqq p_{\TT_A}\xi_n$. Clearly, $\eta_n\in\TT_A$ and $\eta_n\to \xi$. So it suffices to prove that $\eta_n\in\HH_{B_n}$. But for $g\in G_{B_n}$, one has $\pi(g)\TT_A=\TT_A$ (because $G_{B_n}\subseteq G_A$) and therefore \[\pi(g)\eta_n=p_{\pi(g)\TT_A}\pi(g)\xi_n=p_{\TT_A}\xi_n=\eta_n.\qedhere\]
    \end{cproof}

    \begin{claim}\label{claim.TT_AcapHH_B}
        For every finite subset $B\subseteq\Omega$, we have $\TT_A\cap\HH_B\subseteq\HH_A$. 
    \end{claim}
    \begin{cproof}
        Without loss of generality, one may assume that $A\subseteq B$. Define $B_1\coloneqq B\setminus A$. Let $\xi\in\TT_A\cap\HH_B$. Let $C$ be a cofinite subset disjoint from $B$. Since $\xi$ belongs to $\TT_A \subseteq \HH_C$, there exists an increasing sequence $(C_n)_{n\geq 0}$ of finite subsets of $C$ disjoint from $A$ and vectors $\xi_n\in\HH_{A\sqcup C_n}$ such that $\xi_n\to \xi$. Up to enlarging the $C_n$'s, one may assume that $\bigcup_{n\geq 0} C_n=C$. Since $\M$ satisfies $\sigma$-SAP, there exists an amalgamation of $C\cup B$ with itself over $C\sqcup A$. By ultrahomogeneity, one readily gets the following. 
        
        \begin{fact*}
            There exists $B_2$ finite in $\Omega$ and an increasing family $(C_n')_{n\in \N}$ of finite subsets of $\Omega$ such that:
            \begin{itemize}
                \item there is $g_n\in G_{A\sqcup B_1}$ sending $C_n$ to $C_n'$,
                \item there is $h_n \in G_{A \sqcup C_n'}$ sending $B_1$ to $B_2$.
            \end{itemize}
        \end{fact*}

        Let us prove that $\xi\in \HH_{A\sqcup B_2}$. 
        We write
        \begin{align*}\lVert \xi-h_n\xi\rVert &\leq \lVert \xi-h_ng_n\xi_n\rVert + \lVert h_ng_n\xi_n-h_n\xi\rVert \\
        &\leq 2\lVert \xi-g_n\xi_n\rVert \\
        &= 2\lVert \xi-\xi_n\rVert \to 0
        \end{align*}

Thus, since for every $n$, $h_n\xi$ belongs to $\HH_{A\sqcup B_2}$, which is closed, we get that $\xi$ itself belongs to $\HH_{A\sqcup B_2}$. 
 Finally, $\xi$ belongs to $\HH_{A\sqcup B_1}\cap\HH_{A\sqcup B_2}=\HH^{\langle G_{A\sqcup B_1},G_{A\sqcup B_2}\rangle}=\HH_A$ and this finishes the proof of the claim. 
        \end{cproof}
Combining Claim \ref{claim.TT_A} and \ref{claim.TT_AcapHH_B}, we get that $\TT_A\subseteq\HH_A$, which finishes the proof. 
\end{proof}

As a consequence of the results in this section, we obtain the following theorem.

\begin{theorem}
\label{thm:omega-SAP.dissociated}
    Let $\M$ be a countable ultrahomogeneous relational structure. If $\M$ weakly eliminates imaginaries and satisfies the strong cofinite amalgamation property over countable subsets, then $\Aut(\M)$ is dissociated.
\end{theorem}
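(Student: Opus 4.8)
The plan is to assemble the pieces that have already been established in the excerpt rather than to prove anything new from scratch. The statement asserts that for a countable ultrahomogeneous relational structure $\M$ satisfying weak elimination of imaginaries and $\sigma$-SAP, the group $G \coloneqq \Aut(\M)$ is dissociated. The key observation is that Proposition \ref{prop:SAOCSimpliesInvequalTail} already delivers the crucial analytic content: under these hypotheses, for every unitary representation $\pi \colon G \to \UU(\HH)$ and every finite subset $A \subseteq \dom(\M)$, one has $\HH_A = \TT_A$. So the proof should be a short deduction chaining together Proposition \ref{prop:SAOCSimpliesInvequalTail} with Corollary \ref{cor.tail.dissociation.equivalence} and Theorem \ref{thm.intro.equivalence.dissociation}.

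First I would fix a unitary representation $\pi \colon G \to \UU(\HH)$. To invoke Corollary \ref{cor.tail.dissociation.equivalence}, which states that $\pi$ is dissociated if and only if $\HH_A = \TT_A$ for every finite $A$, I must verify its standing hypothesis that $G$ has no algebraicity. This is where weak elimination of imaginaries enters a second time: I would note that ultrahomogeneity guarantees $G$ acts with the homogeneity needed, and that Lemma \ref{lem.caracterisation.noAlg.WEI} ties together ``no algebraicity plus weak elimination of imaginaries'' with the clean description of the lattice of open subgroups. I should be slightly careful here, since Proposition \ref{prop:SAOCSimpliesInvequalTail} assumes only weak elimination of imaginaries, whereas Corollary \ref{cor.tail.dissociation.equivalence} assumes no algebraicity. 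The natural route is to argue that the $\sigma$-SAP hypothesis itself forces no algebraicity: amalgamating a finite one-point extension over a countable set in the manner required by $\sigma$-SAP produces infinitely many distinct copies of any putative algebraic point over a finite set, so orbits of pointwise stabilizers on the remaining points are infinite. This is the step I expect to require the most care, as it is the one junction not handed to us verbatim.

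Once no algebraicity is in hand, the deduction is immediate. Proposition \ref{prop:SAOCSimpliesInvequalTail} gives $\HH_A = \TT_A$ for every finite $A$, so by Corollary \ref{cor.tail.dissociation.equivalence} the representation $\pi$ is dissociated. Since $\pi$ was arbitrary, \emph{every} unitary representation of $G$ is dissociated, which by definition (equivalently, by condition \ref{item.dissociation.unirep} of Theorem \ref{thm.intro.equivalence.dissociation}) means precisely that $G$ is dissociated. The hard part is genuinely only the bookkeeping of hypotheses: confirming that $\sigma$-SAP implies no algebraicity (or else verifying that no algebraicity is anyway subsumed, perhaps via the concluding remark of Remark \ref{rem.omega.SAP.examples} on uniform non-algebraicity), and checking that ultrahomogeneity is the ingredient that lets us identify $\Aut(\M)$ as the closed permutation group to which the earlier machinery applies. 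Everything downstream of that is a direct citation of results already proved in the excerpt.
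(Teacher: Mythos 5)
Your proposal is correct and follows exactly the route the paper intends: the theorem is stated as an immediate consequence of Proposition \ref{prop:SAOCSimpliesInvequalTail} combined with Corollary \ref{cor.tail.dissociation.equivalence} and Theorem \ref{thm.intro.equivalence.dissociation}. You also correctly identify and resolve the only junction the paper leaves implicit, namely that no algebraicity (needed for Corollary \ref{cor.tail.dissociation.equivalence}) follows from $\sigma$-SAP, since strong amalgamation over finite sets together with ultrahomogeneity yields infinite orbits of pointwise stabilizers.
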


\subsubsection{\texorpdfstring{$\aleph_0$-categoricity and $\sigma$-SAP}{ω-categoricity and σ-SAP}}

We now turn to a first application of Theorem \ref{thm:omega-SAP.dissociated} to $\aleph_0$-categorical ultrahomogeneous structures in order to recover dissociation for oligomorphic groups with weak elimination of imaginaries and no algebraicity (Theorem 3.4 of \cite{JT}). 

Let $\M$ be a countable relational structure. We say that two tuples $\x=(x_1,\dots,x_n)$ and $\y=(y_1,\dots,y_n)$ on $\M$ have the same \textit{isomorphism type} if the map $x_i\mapsto y_i$ extends to an isomorphism between the substructures induced by $\x$ and $\y$. This defines an equivalence relation on tuples and we denote by $\tp(\x)$ the equivalence class of $\x$. A \textit{$k$-type} over $\M$ is simply the type of a $k$-tuple in $\M$. The \emph{tree of isomorphism types} of $\M$ is a rooted tree defined as follows: 
\begin{itemize}
    \item its vertex set is the collection of all $k$-types over $\M$, for every $k\geq 0$,
    \item for every $k\geq 0$, there is an edge between a $(k+1)$-type $t$ and a $k$-type $s$ if and only if there exists $(x_1,\dots,x_{k+1})\in\M^{k+1}$ such that $t=\tp(x_1,\dots,x_{k+1})$ and $s=\tp(x_1,\dots,x_k)$.
\end{itemize}

 The following result is a direct reformulation of Ryll-Nardzewski's theorem. 
 
\begin{lemma} \label{Lem.Ryll.Nardzewski}
    The boundary of the tree of isomorphism types of $\M$ is compact if and only if $\M$ is $\aleph_0$-categorical.
\end{lemma}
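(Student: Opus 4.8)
The plan is to reduce the statement to the purely combinatorial fact that the end space of a pruned finitely-branching rooted tree is compact, and then to read off finiteness of the branching from Ryll-Nardzewski's theorem. Write $T$ for the tree of isomorphism types, let $L_k$ denote its set of vertices at level $k$ (so $L_k$ is exactly the set of isomorphism types of $k$-tuples realised in $\M$), and let $\partial T$ be the boundary, i.e.\ the set of infinite branches $(t_k)_{k\ge 0}$ with $t_k\in L_k$ and $t_k$ the parent of $t_{k+1}$, equipped with the topology generated by the cylinders $N_v=\{\text{branches through }v\}$. The first observation I would record is that $T$ is \emph{pruned}: every vertex lies on an infinite branch. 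Indeed a vertex is a realised type $\tp(x_1,\dots,x_k)$, and since $\M$ is infinite one may pick $x_{k+1}\notin\{x_1,\dots,x_k\}$, so $\tp(x_1,\dots,x_{k+1})$ is a child of it; iterating, every vertex extends to an infinite branch.

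With pruning in hand, the core claim is that $\partial T$ is compact if and only if every $L_k$ is finite. For the easy direction, if all $L_k$ are finite then $\partial T$ is a closed subset of the product $\prod_{k\ge 0}L_k$ of finite discrete spaces (the branch condition ``$t_k$ is the parent of $t_{k+1}$'' is a closed condition), hence compact by Tychonoff. For the converse I would argue contrapositively: suppose some $L_k$ is infinite and take $m\ge 1$ minimal with $L_m$ infinite (note $L_0$ is the single root, so $m\ge 1$ and $L_{m-1}$ is finite). Each vertex of $L_m$ has a parent in $L_{m-1}$, so by pigeonhole some $v\in L_{m-1}$ has infinitely many children $w_1,w_2,\dots$. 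Because $T$ is pruned each $N_{w_i}$ is non-empty, and the $N_{w_i}$ are pairwise disjoint clopen sets; then $\{N_{w_i}\}_{i}\cup\{\partial T\setminus N_v\}$ is an open cover of $\partial T$ admitting no finite subcover, so $\partial T$ is not compact.

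It remains to translate ``every $L_k$ is finite'' into $\aleph_0$-categoricity, which is the ``direct reformulation of Ryll-Nardzewski'' alluded to. Using that $\M$ is homogeneous (the standing hypothesis of this section), two $k$-tuples have the same isomorphism type if and only if they lie in the same $\Aut(\M)$-orbit, so $\lvert L_k\rvert$ equals the number of orbits of $\Aut(\M)\actson\M^k$. By Ryll-Nardzewski's theorem, $\M$ is $\aleph_0$-categorical precisely when $\Aut(\M)$ has finitely many orbits on $\M^k$ for every $k$, i.e.\ precisely when every $L_k$ is finite. Combining this with the core claim yields the equivalence. I expect the only delicate point to be the topological bookkeeping in the core claim — in particular checking that $\partial T$ is genuinely closed in $\prod_k L_k$ and that the disjoint-cylinder cover has no finite subcover; the homogeneity step and the appeal to Ryll-Nardzewski are then immediate, which is why the result is merely a reformulation.
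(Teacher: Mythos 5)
Your proof is correct and is exactly the argument the paper has in mind: the paper offers no proof beyond calling the lemma a ``direct reformulation of Ryll-Nardzewski's theorem,'' and your reduction (boundary compact iff all levels finite iff $\Aut(\M)$ oligomorphic iff $\aleph_0$-categorical) supplies precisely the intended details. You are also right to flag that the step identifying isomorphism types with $\Aut(\M)$-orbits uses the homogeneity of $\M$, which is the standing hypothesis in this section even though the lemma does not restate it.
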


We now show that $\aleph_0$-categorical Fraïssé limits satisfy $\sigma-$SAP and even more: the cofinitess assumption is not even necessary in this case.

\begin{lemma}\label{lem.omegacatimpliesSAPAS}
    Let $\M$ be a countable relational ultrahomogeneous structure which has the strong amalgamation property. If $\M$ is $\aleph_0$-categorical, then $\M$ has the strong amalgamation property over arbitrary subsets. 
\end{lemma}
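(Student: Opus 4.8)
The plan is to construct the amalgam concretely on the set-theoretic pushout of $\B_1$ and $\B_2$ over $\A$, and to define the relations on it by a compactness argument in which $\aleph_0$-categoricity supplies the finiteness that makes compactness run and the finite strong amalgamation property supplies non-emptiness at each finite stage. Throughout I write $\mathcal{K}\coloneqq\mathrm{Age}(\M)$; since $\M$ is ultrahomogeneous, it is the Fraïssé limit of $\mathcal{K}$.

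First I would set up the target object. Given $\A,\B_1,\B_2\in\sigma\mathrm{Age}(\M)$ and embeddings $f_i\colon\A\to\B_i$, identify $\A$ with a common substructure of $\B_1$ and $\B_2$ and put $X\coloneqq\B_1\sqcup_\A\B_2$, the disjoint union glued along $\A$, so that as subsets of $X$ one has $X\cap\B_1=\B_1$, $X\cap\B_2=\B_2$ and $\B_1\cap\B_2=\A$. Building a strong amalgam then amounts to endowing $X$ with an $L$-structure $\mathbf{C}$ restricting to $\B_1$ and $\B_2$ on the two pieces and embedding into $\M$; condition (2) of Definition \ref{defn:SAPAS} holds automatically since the only identifications present in $X$ come from $\A$.

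The two ingredients I would isolate are the following. The first is that, $\M$ being the Fraïssé limit of $\mathcal{K}$, a countable structure embeds into $\M$ if and only if each of its finite substructures lies in $\mathcal{K}$ (universality of the Fraïssé limit); so it suffices to define an $L$-structure on $X$ extending $\B_1,\B_2$ and having all finite substructures in $\mathcal{K}$. The second is a stagewise analysis: for each finite $F\subseteq X$ let $S_F$ be the set of $L$-structures on $F$ that restrict to $\B_1$ on $F\cap\B_1$, to $\B_2$ on $F\cap\B_2$, and belong to $\mathcal{K}$. Since $F=(F\cap\B_1)\cup(F\cap\B_2)$ with $(F\cap\B_1)\cap(F\cap\B_2)=F\cap\A$, applying the finite strong amalgamation property to $F\cap\B_1$ and $F\cap\B_2$ over $F\cap\A$ yields a member of $\mathcal{K}$ carried exactly by $F$, so $S_F\neq\emptyset$; this is where the hypothesis enters. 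And this is exactly where $\aleph_0$-categoricity is needed: by Ryll-Nardzewski, $\mathcal{K}$ is uniformly locally finite, so there are only finitely many structures of $\mathcal{K}$ carried by a fixed finite set, whence $S_F$ is finite.

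To finish, since $\mathcal{K}$ is closed under substructures, restriction gives well-defined maps $S_F\to S_{F'}$ for $F'\subseteq F$, making $(S_F)_F$ an inverse system of non-empty finite sets over the directed poset of finite subsets of $X$. By the usual compactness argument (finite intersection property in the compact product $\prod_F S_F$, directedness allowing any finite family of coherence constraints to be met), the inverse limit is non-empty, and a coherent choice $(\mathbf{s}_F)_F$ assembles into an $L$-structure $\mathbf{C}$ on $X$ with every finite restriction equal to some $\mathbf{s}_F\in\mathcal{K}$; hence $\mathbf{C}$ extends $\B_1,\B_2$, embeds into $\M$, and gives the desired strong amalgam together with the inclusions $\B_i\hookrightarrow\mathbf{C}$. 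The step I expect to be the crux is the finiteness of each $S_F$: for an infinite language nothing would prevent $S_F$ from being infinite and compactness from failing, so the essential point is precisely that $\aleph_0$-categoricity forces uniform local finiteness, which is what lets the finite strong amalgams cohere into a single amalgam over the arbitrary (possibly infinite) common subset $\A$.
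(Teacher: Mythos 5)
Your proof is correct, and it reaches the conclusion by a genuinely different (if philosophically cognate) route from the paper's. The paper fixes an exhaustion $\A_1\subseteq\A_2\subseteq\dots$ of $\A$ by finite substructures, builds a finite strong amalgam $\mathbf{C}_n$ over each $\A_n$, records the resulting isomorphism type $t_n$ of the finite tuple $(\x,\y,\A_n)$, and then extracts a convergent subsequence in the boundary of the tree of isomorphism types, whose compactness is exactly Ryll--Nardzewski; a realization of the limit branch in $\M$ is the desired amalgam. You instead work on the set-theoretic pushout $X=\B_1\sqcup_\A\B_2$, attach to each finite $F\subseteq X$ the finite non-empty set $S_F$ of admissible structures on $F$ (non-empty by finite strong amalgamation, finite because $\aleph_0$-categoricity leaves only finitely many quantifier-free $n$-types, hence finitely many members of $\mathrm{Age}(\M)$ on a fixed $n$-element set), and take a point of the inverse limit over the directed poset of finite subsets. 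The two arguments use the same two ingredients in the same roles, but your packaging buys something: it never uses the hypothesis that $\dom(\B_i)\setminus\dom(f_i(\A))$ is finite, so it proves strong amalgamation of arbitrary countable structures over a common countable substructure, whereas the paper's sequential argument relies on the new points $\x,\y$ forming finite tuples sitting at the root of the tree of types. It also makes the "strongness" of the amalgam (condition (2) of Definition \ref{defn:SAPAS}) completely transparent, since the underlying set is the pushout from the start, while in the paper this has to be read off from the types $t_n$. The one place to be slightly careful in your write-up is the claim that $\mathcal{K}$ is ``uniformly locally finite'': for a relational language local finiteness is automatic and is not the point; what Ryll--Nardzewski actually supplies is the finiteness of the number of structures in $\mathcal{K}$ on a fixed finite carrier, which is what you in fact use. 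Note also that when $F\cap\A=\emptyset$ you are invoking strong amalgamation over the empty structure (disjoint joint embedding); the paper's proof implicitly makes the same assumption, so this is not a defect relative to it.
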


\begin{proof} 
Let $\A,\B_1,\B_2\in\sigma\mathrm{Age}(\M)$ and embeddings $f_i\colon\A\hookrightarrow \B_i$ satisfying $\lvert \dom(\B_i)\setminus \dom(f_i(\A))\rvert <+\infty$. If $\A$ is finite, there is nothing to prove since $\M$ has the strong amalgamation property. So let us assume that $\A$ is infinite. Fix an enumeration $a_1,\dots,a_n,\dots$ of $\dom(\A)$ and let $\A_n$ be the structure generated by $\{a_1,\dots,a_n\}$. Let $\x=(x_1,\dots,x_k)$ be an enumeration of $\dom(\B_1)\setminus \dom(f_1(\A))$ and $\y=(y_1,\dots,y_l)$ be an enumeration of $\dom(\B_2)\setminus \dom(f_2(\A))$.  Let $\mathbf{C}_n$ be an amalgamation of $\B_1\setminus f_1(\A\setminus \A_n)$ and $\B_2\setminus f_2(\A\setminus \A_n)$ over $\A_n$ as in Definition \ref{defn:SAPAS}. We identify the domain of $\mathbf{C}_n$ with the union of the domains of $\B_1\setminus f_1(\A\setminus \A_n)$ and $\B_2\setminus f_2(\A\setminus \A_n)$, and call  $t_n$ the type induced by $\mathbf{C}_n$ on $(\x,\y,\A_n)$. Let $\xi_n$ be any element of the boundary of the tree of types extending $t_n$. By compactness (Lemma \ref{Lem.Ryll.Nardzewski}), $(\xi_n)$ admits a subsequence converging to some point  $\xi$. Any infinite tuple $C=(c_1,c_2,\dots)\in\M^\N$ satisfying $(\tp(c_1,\dots,c_n))_{n\geq 0}=\xi$ is as wanted. Existence of such a tuple is ensured by ultrahomogeneity of $\M$. 
\end{proof}

As a consequence, we recover the result {\cite[Thm.~3.4]{JT}} of the second author and Tsankov, whose original proof uses the classification of unitary representations for oligomorphic groups due to Tsankov \cite{Tsankov}. Recall that a closed permutation group $G\leq\Sym(\Omega)$ is \textit{oligomorphic} if for every $n\in \N$, the diagonal action $G\curvearrowright\Omega^n$ has only finitely many orbits.

\begin{theorem}\label{thm.oligomorphic.dissociated}
Let $G\leq \Sym(\Omega)$ be a closed subgroup. Assume that $G$ is oligomorphic, has no algebraicity, and weakly eliminates imaginaries. Then $G$ is dissociated. 
\end{theorem}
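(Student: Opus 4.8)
The plan is to realize $G$ as the automorphism group of a canonical countable ultrahomogeneous structure $\M$, and then to chain Lemma \ref{lem.omegacatimpliesSAPAS} with Theorem \ref{thm:omega-SAP.dissociated}. All the analytic content has already been established in the previous subsections, so the argument amounts to building the right structure and translating the group-theoretic hypotheses into the model-theoretic ones.

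First I would construct $\M$ on the domain $\Omega$: for each $n\geq 1$ and each orbit $O\subseteq\Omega^n$ of the diagonal action $G\curvearrowright\Omega^n$, introduce an $n$-ary relation symbol interpreted as $O$. Since $G$ is closed, $\Aut(\M)=G$: any $h\in\Aut(\M)$ preserves every $G$-orbit on tuples, so it lies in the closure of $G$, which is $G$ itself. Two finite tuples realize the same atomic type in $\M$ exactly when they lie in a common $G$-orbit; taking the relation to be the orbit of a given tuple shows that tuples of equal type are conjugate under $G=\Aut(\M)$, so $\M$ is ultrahomogeneous. Finally, oligomorphicity gives finitely many orbits, hence finitely many types, on each $\Omega^n$, so by Ryll-Nardzewski (the boundary-compactness criterion recalled just above Lemma \ref{lem.omegacatimpliesSAPAS}) $\M$ is $\aleph_0$-categorical.

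Next I would transport the two standing hypotheses to $\M$. Weak elimination of imaginaries is a property of $G=\Aut(\M)$ and holds by assumption. For the amalgamation input required by Lemma \ref{lem.omegacatimpliesSAPAS}, I would invoke the standard equivalence that an ultrahomogeneous structure has the strong amalgamation property if and only if its automorphism group has no algebraicity: a point $b\in\Omega\setminus A$ has finite $G_A$-orbit precisely when $b\in\mathrm{acl}(A)\setminus A$, so the no-algebraicity hypothesis (all orbits of $G_A$ on $\Omega\setminus A$ infinite) says exactly that $\mathrm{acl}(A)=A$ for finite $A$, which is equivalent to strong amalgamation of $\mathrm{Age}(\M)$. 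Thus $\M$ is a countable relational ultrahomogeneous structure, it is $\aleph_0$-categorical, and it has the strong amalgamation property, so Lemma \ref{lem.omegacatimpliesSAPAS} applies and yields $\sigma$-SAP for $\M$. Since $\M$ is moreover ultrahomogeneous and weakly eliminates imaginaries, Theorem \ref{thm:omega-SAP.dissociated} gives that $\Aut(\M)=G$ is dissociated, as desired.

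I expect no serious obstacle here, since the heavy lifting (the tail-subspace analysis under $\sigma$-SAP, culminating in Proposition \ref{prop:SAOCSimpliesInvequalTail} and Theorem \ref{thm:omega-SAP.dissociated}) is already done. The only step demanding genuine care is the dictionary entry linking the group-theoretic no-algebraicity hypothesis to the model-theoretic strong amalgamation property that feeds Lemma \ref{lem.omegacatimpliesSAPAS}; the construction of $\M$, its ultrahomogeneity, and its $\aleph_0$-categoricity are routine, and once $\sigma$-SAP is in hand the conclusion is immediate.
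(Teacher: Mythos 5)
Your proposal is correct and follows essentially the same route as the paper: pass to the canonical structure $\M_G$, observe it is ultrahomogeneous with the strong amalgamation property (via the standard equivalence with no algebraicity) and $\aleph_0$-categorical by Ryll-Nardzewski, then chain Lemma \ref{lem.omegacatimpliesSAPAS} with Theorem \ref{thm:omega-SAP.dissociated}. The paper's proof is just a terser version of yours, leaving implicit the dictionary between no algebraicity and strong amalgamation that you spell out.
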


\begin{proof}
Let $\M_G$ be the canonical structure associated with $G$ (for a definition, see Section 2.3 of \cite{Cameron}). It is a countable relational ultrahomogeneous structure, which satisfies the strong amalgamation property. Moreover, by the Ryll-Nardzewski Theorem, $\M_G$ is $\aleph_0$-categorical (see \cite[Th.~2.10]{Cameron}). Thus by Lemma \ref{lem.omegacatimpliesSAPAS}, $\M_G$ satisfies $\sigma$-SAP. Finally, Theorem \ref{thm:omega-SAP.dissociated} allows us to conclude that $\Aut(\M_G)=G$ is dissociated.
\end{proof}

\section{New examples of dissociated permutation groups}\label{sec.examples}

\subsection{Weak elimination of imaginaries from Ramsey theorems}\label{sec.RamseyWEI}

This short subsection is here to provide the tools we need in the rest of the section to prove that the described structures admit weak elimination of imaginaries. Simply put, we prove that in the right context, weak elimination of imaginaries is implied by a Ramsey property.

Let $\mathcal{F}$ be a class of structures. For $A,B\in \mathcal{F}$, denote by $\binom B A$ the set of embeddings of $A$ in $B$. We say that $\mathcal{F}$ has the \emph{Ramsey Property} if for all $A,B\in \mathcal{F}$ and $k\in \N$, there is a $C\in \mathcal{F}$ such that for every $\chi \colon {\binom C A}\to \{1,\dots, k\}$, there exists $\phi \in\binom{C}{B}$ such that all embeddings of $A$ with image in $\phi(B)$ have the same image under $\chi$.

Adopting flexible notations, we will say two structures of the form $(C,x_1,\dots,x_n)$ and $(C,y_1,\dots,y_n)$ are isomorphic, and write $(C,x_1,\dots,x_n) \simeq (C,y_1,\dots,y_n)$, if the map $C\cup \{x_1,\dots, x_n\}\to C\cup \{y_1,\dots, y_n\}$ that fixes $C$ and sends  $x_i$ to $y_i$ is an isomorphism. We say that $\mathcal{F}$ is $2$-symmetric if the following holds:
$$\forall (C,x,y)\in \mathcal{F}, \ \left[ (C,x) \simeq (C,y) \Rightarrow (C,x,y)\simeq (C,y,x)\right].$$
For a class $\mathcal{F}$, we denote by $\mathcal{F}_<$ the class of all ordered elements of $\mathcal{F}$, that is of all expansions of elements of $\mathcal{F}$ by a linear order. If $\mathcal{F}$ is a Fraïssé class, we will denote its Fraïssé limit by $\mathbb{F}$.

\begin{proposition}\label{prop.RamseyWEI}
    Let $\mathcal{F}$ be a $2$-symmetric Fra\"iss\'e class with the strong amalgamation property. If $\mathcal{F}_<$ has the Ramsey property, then $\mathbb{F}$ weakly eliminates imaginaries.
\end{proposition}

\begin{proof}
    Let us first give a characterization of weak elimination of imaginaries, adapted from \cite[Cor.~7.6]{Conant}.
    \begin{fact}\label{fact}
        Let $\mathcal{F}$ be a Fra\"issé class and $\mathbb{F}$ its Fraïssé limit such that:
        \begin{enumerate}[label=(\roman*)]
            \item\label{fact.item1} $\mathcal{F}$ has the strong amalgamation property,
            \item\label{fact.item2} for every finite $D\subseteq\mathbb{F}$ and every $x\in \mathbb F$, there is no non-trivial $\mathrm{Aut}(\mathbb F)_{D}$-invariant equivalence relation on $\{x'\in \mathbb{F} \colon \ (D,x)\simeq (D,x')\}$.
        \end{enumerate}
      Then $\mathrm{Aut}(\mathbb{F})$ weakly eliminates imaginaries.
    \end{fact}

    \begin{proof}[Proof of the fact]

        Write $G\coloneqq \Aut(\mathbb{F})$ and let $V\leqslant G$ be an open subgroup. There exists a finite subset $A\subseteq \Omega$ such that $G_A\leqslant V$. Write $A=\{a_1,\dots, a_n\}$ in a injective way and let $\overline{a} =(a_1,\dots,a_n)$. Let be $\sim$ the $G$-invariant equivalence relation on $G\cdot \overline{a}$ defined by $g\cdot \overline{a} \sim h\cdot\overline{a}$ if $g^{-1}h\in V$. Clearly, the stabilizer in $G$ of the $\sim$-class of $\overline{a}$ is precisely $V$. Proposition 5.5.3 in \cite{kruckman_infinitary_2016} yields $I\subset \{1,\dots,n\}$ and a subgroup $P$ of $\mathrm{Sym}(I)$ such that for any $\overline{b}\in G\cdot \overline{a}$, we have
        \begin{equation}\label{Eq.WEI}
        \overline{a}\sim \overline{b} \Leftrightarrow \bigvee_{f\in P} \bigwedge_{i\in I} a_i=b_{f(i)}.
        \end{equation}

        We set $A'=\{a_i \ \colon \ i\in I\}$ and prove that $G_{A'}$ is a finite-index subgroup of $V$. Since $G_{A'}$ clearly preserves the $\sim$-class of $\overline{a}$, it is indeed a subgroup of $V$. Moreover, $V$ acts on $I$ by $(\ref{Eq.WEI})$ hence the existence of a morphism from $V$ to the finite group $\mathrm{Sym}(I)$. The kernel of this action being precisely $G_{A'}$, the quotient $V/G_{A'}$ is indeed finite.
    \end{proof}


    Write $G\coloneqq \Aut(\mathbb{F})$. Towards a contradiction, let us assume that $\mathcal{F}$ does not satisfy \ref{fact.item2}. In this case, there exist $(D,x)\subseteq \mathbb{F}$ finite and a non-trivial $G_D$-invariant equivalence relation $\sim$ on $X\coloneqq \{x'' \in \mathbb{F} \colon \ (D,x)\simeq (D,x'')\}$. By non-triviality, there exist $x',y\in X\backslash\{x\}$ such that $x\sim x' \nsim y$. By the no-algebraicity hypothesis, $G_{D,x,y}\cdot x'$ is infinite, and we can pick $z\neq z' \in G_{D,x,y}\cdot x'\backslash \{x'\}$. In particular, $(D,y,z)\simeq(D,y,z')$ and $z\sim z' \sim x$.

    We now produce a counterexample to the Ramsey property for $\mathcal{F}_<$. Fixing a relation symbol $<$ for the added linear ordering, we will denote by $<^C$ its interpretation in a given structure $C\in \mathcal{F}_<$. Let $A$ be any expansion of $(D,y,z)$ by a linear order where $y <^A z$ and $D<^A y$. Similarly, let $B$ any expansion of $(D,z',y,z)$ by a linear order where $z'<^By<^B z$ and such that, as ordered expansions, $(D,z',y)\simeq A$. Note that by $2$-symmetry $B$ contains two copies of $A$: $(D,y,z)$ and $(D,z',y)$. For every $C\in \mathcal{F}_<$, we will produce a bad coloring of $\binom C A$.

    Let us fix $C\in \mathcal{F}_<$. For every copy $D_0$ of $D$ in $C$, we denote by $\sim_{D_0}$ the $G_{D_0}$-invariant equivalence relation on $X_{D_0}\coloneqq\{a\in C \colon \ (D_0,a)\simeq (D,x)\}$ corresponding to $\sim$. We order $X_{D_0}$ in such a way that the classes form intervals, and denote by $\prec_{D_0}$ this ordering. To color a copy $A_1=(D_1,x_1,y_1)$ of $A$ in $C$, compare the ordering of $x_1$ and $y_1$ in $<^C$ and $\prec_{D_1}$. Associate $0$ to $A_1$ if they agree and $1$ otherwise. 
    
    Let us consider $B_1= (D_1,z_1',y_1,z_1)$ a copy of $B$ inside $C$. Note that since $\mathcal{F}$ is $2$-symmetric, $B_1$ contains two copies of $A$: $A_1\coloneqq(D_1,y_1,z_1)$ and $A_2\coloneqq(D_1,z_1',y_1)$. In the described coloring, those two copies of $A$ can never have the same color. Indeed, $z_1\prec_{D_1} y_1$ iff $z_1'\prec_{D_1} y_1 $ while $z_1'<^{C}y_1<^{C}z_1$.
\end{proof}

\subsection{Metrically homogeneous spaces}

In this section, we consider some classes of countable metrically homogeneous spaces of infinite diameter for which the isometry group satisfies the assumptions of Theorem \ref{thm:omega-SAP.dissociated}. A metric space $(X,d)$ is \emph{metrically homogeneous} if every surjective isometry between two finite subsets of $X$ extends to a surjective isometry of $X$.




An \emph{integral metric space} is a metric space $(X,d)$ such that $d(X\times X)\subseteq\N$. Every countable integral metric space $(X,d)$ may be considered as a countable relational structure in the language $\LL=(R_n)_{n\geq 0}$ where each $R_n$ is a relation of arity $2$ whose interpretation $R_n^X$ in $X$ is given by
\[R_n^X\coloneqq\{(x,y)\in X^2\colon d(x,y)=n\}.\]
Notice that $X$ is ultrahomogeneous (as a countable structure) if and only if $X$ is metrically homogeneous. Notice that $\mathrm{Isom}(X,d)=\mathrm{Aut}(X)$. For integral metric spaces, there is a canonical amalgamation over non-empty subspaces that will be useful later.


\begin{definition}
    Let $(X_1,d_1)$ and $(X_2,d_2)$ be two integral metric spaces such that the intersection $Y\coloneqq X_1\cap X_2$ is non-empty. Assume that $d_1$ and $d_2$ coincide on $Y\times Y$. Let $X\coloneqq X_1\cup X_2$ and define a map $d\colon X\times X\to\N$, which restricts to $d_1$ on $X_1\times X_1$, to $d_2$ on $X_2\times X_2$ and such that for all $x_1\in X_1\setminus Y$ and $x_2\in X_2\setminus Y$,
    \[d(x_1,x_2)=\min\{d(x_1,y)+d(y,x_2)\colon y\in Y\}.\]
    Then $(X,d)$ is an integral metric space. It is called the \emph{metric amalgam} of $(X_1,d_1)$ and $(X_2,d_2)$ over $Y$ and is denoted by $(X_1,d_1)*_Y(X_2,d_2)$. 
    \end{definition}

\subsubsection{Integral metric spaces with no small triangle of odd perimeter}\label{sec.no.odd}

Let $p\geq 1$ be an integer. A metric space $(X,d)$ has \emph{no triangle of odd perimeter less than $p$} if the following holds: for every $(x,y,z)\in X^3$, if $d(x,y)+d(y,z)+d(z,x)$ is odd, then $d(x,y)+d(y,z)+d(z,x)>p$. 

\begin{lemma}\label{lem.amalgam.no.triangle}
    Let $p\geq 1$ be an integer. Let $(X_1,d_1)$ and $(X_2,d_2)$ be two integral metric spaces which contain no triangle of odd perimeter less than $p$. If $X_1\cap X_2$ is non-empty, then the metric amalgam of $(X_1,d_1)$ and $(X_2,d_2)$ over ${X_1\cap X_2}$ contains no triangle of odd perimeter less than $p$.
\end{lemma}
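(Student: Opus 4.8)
The plan is to fix a triple of vertices in $X = X_1 \cup X_2$, write $P$ for its perimeter, and show that if $P$ is odd then $P > p$. Since $d$ restricts to $d_i$ on $X_i \times X_i$, if all three vertices lie in $X_1$ (resp.\ $X_2$) then the perimeter computed in $(X,d)$ agrees with the one computed in $(X_1,d_1)$ (resp.\ $(X_2,d_2)$), and the hypothesis on that factor finishes the job. So I may assume at least one vertex lies in $X_1 \setminus Y$ and at least one in $X_2 \setminus Y$, where $Y := X_1 \cap X_2$; this leaves exactly two configurations up to swapping $X_1$ and $X_2$: (A) one vertex in each of $X_1 \setminus Y$, $X_2 \setminus Y$ and one in $Y$; (B) one vertex in $X_1 \setminus Y$ and two in $X_2 \setminus Y$.

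The central device is the following. For any crossing pair $x_1 \in X_1 \setminus Y$, $x_2 \in X_2 \setminus Y$, the infimum defining $d(x_1,x_2)$ is attained at some bridge point $y \in Y$ (it is a least element of a nonempty set of non-negative integers), so $d(x_1,x_2) = d(x_1,y) + d(y,x_2)$. Inserting such bridge points, I will express $P$ as a signed sum $P = \sum_k P_k - 2r$, where each $P_k$ is the perimeter of a genuine triangle lying entirely inside $X_1$ or inside $X_2$, and $r \geq 0$. In case (A) there is a single crossing edge; bridging it by $y_0$ splits the triangle into $(a,c,y_0) \subseteq X_1$ and $(b,c,y_0) \subseteq X_2$, glued along the edge $c\,y_0 \subseteq Y$, giving $P = P_1 + P_2 - 2\,d(c,y_0)$. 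In case (B) there are two crossing edges, bridged by points $y_0, y_1 \in Y$; viewing the five edges of $P$ as a pentagon on the vertices $a, y_0, b, c, y_1$ (in this cyclic order), I triangulate it using only the factor-respecting diagonals $y_0 c$ and $y_0 y_1$, obtaining the triangles $(y_0,b,c), (y_0,c,y_1) \subseteq X_2$ and $(a,y_0,y_1) \subseteq X_1$, together with the identity $P = P_{(y_0,b,c)} + P_{(y_0,c,y_1)} + P_{(a,y_0,y_1)} - 2\,d(y_0,c) - 2\,d(y_0,y_1)$.

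With such an identity in hand the argument is uniform. Since $d(y,y') = d_1(y,y') = d_2(y,y')$ for $y,y' \in Y$, the correction $2r$ is an even integer, so $P \equiv \sum_k P_k \pmod 2$; if $P$ is odd, at least one constituent $P_{k_0}$ is odd, and as that triangle sits inside a single factor the hypothesis yields $P_{k_0} > p$. It then remains to check $P \geq P_{k_0}$, i.e.\ that $\sum_{k \neq k_0} P_k - 2r \geq 0$. This is where the triangle inequalities within $X_1$ and $X_2$ enter: in each subcase (one per choice of which constituent is odd) the subtracted quantities $d(y_0,c)$ and/or $d(y_0,y_1)$ are absorbed by the edges of the remaining triangles, through inequalities such as $d(y_0,c) \leq d(y_0,b) + d(b,c)$ and $d(y_0,y_1) \leq d_1(a,y_0) + d_1(a,y_1)$. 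Combining, $P \geq P_{k_0} > p$.

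The routine part is case (A) together with the parity bookkeeping; the main obstacle is case (B), where two distinct bridge points force a genuine pentagon, and one must both find a triangulation into factor-internal triangles and verify, separately for each of the three possible odd constituents, that the leftover after removing the $-2r$ correction stays non-negative. Once the correct factor-respecting triangulation (the diagonals $y_0 c$ and $y_0 y_1$) is identified, each of these three non-negativity checks reduces to one or two applications of the triangle inequality inside the appropriate factor.
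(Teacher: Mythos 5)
Your proof is correct and follows essentially the same route as the paper's: insert optimal bridge points in $Y$ for each crossing edge, fan-triangulate the resulting quadrilateral (resp.\ pentagon) into factor-internal triangles whose perimeters sum to $P$ plus an even correction, use parity to find an odd constituent triangle, and absorb the correction via triangle inequalities inside each factor to get $P \geq P_{k_0} > p$. The paper's pentagon triangulation is likewise a fan from a bridge point, so the two arguments coincide in all essentials.
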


\begin{proof} 
    Let $X\coloneqq X_1\cup X_2$ and $Y\coloneqq X_1\cap X_2$ . Fix $x_1\in X_1\setminus Y$, $x_2\in X_2\setminus Y$ and $x\in X$ such that  $d(x_1,x_2)+d(x_2,x)+d(x,x_1)$ is odd. There are two cases to check. 
    
    Assume that $x\in Y$. Let $y\in Y$ be such that $d(x_1,x_2)=d(x_1,y)+d(y,x_2)$. Then $d(x_1,y)+d(y,x_2)+d(x_2,x)+d(x,x_1)$ is odd. Therefore, there is $i\in\{1,2\}$ such that $d(x_i,y)+d(y,x)+d(x,x_i)$ is odd. But this last quantity is $>p$ since $(x_i,y,x)\in X_i\times X_i\times X_i$. Therefore, 
    \begin{align*}
        d(x_1,x_2)+d(x_2,x)+d(x,x_1) & = d(x_1,y)+d(y,x_2)+ d(x_2,x)+d(x,x_1) \\
&\geq d(x_i,y)+d(y,x)+d(x,x_i) \\
& >p,
    \end{align*}which is what we wanted.

    Assume now that $x\in X\setminus Y$. Without loss of generality, we may assume that $x\in X_1\setminus Y$. Let $y,z\in Y$ be such that $d(x_1,x_2)=d(x_1,y)+d(y,x_2)$ and $d(x_2,x)=d(x_2,z)+d(z,x)$. Then $d(x_1,y)+d(y,x_2)+d(x_2,z)+d(z,x)+d(x,x_1)$ is odd. Therefore, among the following three quantities 
    \begin{align*}
        c_1&\coloneqq d(x,x_1)+d(x_1,y)+d(y,x),\\
        c_2&\coloneqq d(x,y)+d(y,z)+d(z,x),\\
        c_3&\coloneqq d(z,y)+d(y,x_2)+d(x_2,z),
    \end{align*}
    at least one of them is odd. But if $c_i$ is odd, then $c_i>p$. By the triangle inequality, we obtain that
    \[d(x_1,x_2)+d(x_2,x)+d(x,x_1)\geq \max(c_1,c_2,c_3) >p,\]
    which concludes the proof. 
\end{proof}

\begin{lemma}\label{lem.no.odd.triangle.Fraisse}
    Let $p\geq 1$ be an integer. Then the class of finite integral metric spaces with no triangle of odd perimeter less than $p$ is a Fraïssé class satisfying the strong amalgamation property. 
\end{lemma}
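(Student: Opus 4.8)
The plan is to verify the four defining conditions of a Fraïssé class — the hereditary property, joint embedding, strong amalgamation, and having only countably many isomorphism types — relying on the metric amalgam construction of the previous Definition and on Lemma \ref{lem.amalgam.no.triangle} for the combinatorial heart of the matter. The class is visibly nonempty and has only countably many isomorphism types, since a finite integral metric space is coded by a finite symmetric matrix with entries in $\N$. The hereditary property is immediate: any triangle occurring in a subspace of $X$ is already a triangle in $X$, so a forbidden configuration cannot appear in a substructure unless it appears in $X$.

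For the strong amalgamation property, I would take $\A,\B_1,\B_2$ in the class together with isometric embeddings $f_i\colon\A\hookrightarrow\B_i$. After identifying $\A$ with its images and arranging that $\dom(\B_1)\cap\dom(\B_2)$ is exactly the common copy of $\A$, I form the metric amalgam $\B_1*_{\A}\B_2$. By the Definition of the metric amalgam this is again a finite integral metric space, and Lemma \ref{lem.amalgam.no.triangle} guarantees that it contains no triangle of odd perimeter less than $p$; hence it lies in the class and the amalgamation diagram commutes. The amalgamation is moreover \emph{strong}: for $b_1\in\dom(\B_1)\setminus\dom(\A)$ and $b_2\in\dom(\B_2)\setminus\dom(\A)$, the amalgamated distance is $d(b_1,b_2)=\min_{y\in\dom(\A)}\bigl(d(b_1,y)+d(y,b_2)\bigr)\geq 1$, because $b_1\notin\dom(\A)$ forces $d(b_1,y)\geq 1$ for every $y$. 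Thus no two points lying outside $\A$ are ever identified.

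Finally, joint embedding follows from strong amalgamation over a one-point base: given $\B_1,\B_2$ in the class, pick arbitrary points $p_i\in\dom(\B_i)$, let $\A$ be the one-point space, and embed it as $p_i$ into each $\B_i$; the metric amalgam $\B_1*_{\A}\B_2$ then embeds both spaces. Alternatively, one may glue $\B_1$ and $\B_2$ disjointly by declaring every cross-distance equal to a single large even integer $N$: taking $N$ large preserves the triangle inequality, while its evenness ensures that any mixed triangle has perimeter congruent modulo $2$ to a distance already present in one of the factors, so that an odd perimeter is automatically at least $2N>p$.

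The only genuinely delicate point is the joint embedding step, since the metric amalgam is defined only over a \emph{non-empty} common subspace; amalgamating over a single point (or the even-distance gluing above) circumvents this, and every remaining verification reduces to the Definition of the metric amalgam and to Lemma \ref{lem.amalgam.no.triangle}, which already carries out the parity bookkeeping.
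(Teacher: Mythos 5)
Your proposal is correct and follows essentially the same route as the paper: hereditarity is immediate, strong amalgamation over a non-empty base is the metric amalgam combined with Lemma \ref{lem.amalgam.no.triangle}, and the empty-base/joint-embedding case is handled by gluing with a large constant cross-distance (the paper uses $\max(p,\mathrm{diam}(\B_1),\mathrm{diam}(\B_2))$ where you use a large even $N$). One cosmetic remark: the evenness of $N$ is not actually what saves the mixed triangles — any mixed triangle contains exactly two cross-edges, so their contribution $2N$ is even regardless of the parity of $N$, and an odd perimeter then forces an odd distance from one factor plus $2N>p$.
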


\begin{proof}
     It is clear that this class satisfies the hereditary property. The joint embedding property will follow from the amalgamation property since the language is relational and there is no constant relation. Let us show the strong amalgamation property. For this, fix three finite integral metric spaces $(A,d_A), (B_1,d_1)$ and $(B_2,d_2)$ with no triangle of odd perimeter less than $p$. Assume that $(B_1,d_1)$ and $(B_2,d_2)$ contain an isometric copy of $A$. Let $C$ be the set obtained by identifying both copies of $A$ in the disjoint union $B_1\sqcup B_2$. We define a new metric $d$ on $C$ as follows. There are two cases to check.
     
     If $A$ is non-empty, then let $d$ be the metric amalgam of $(B_1,d_1)$ and $(B_2,d_2)$ over $(A,d_A)$. Then $(C,d)$ is a finite integral metric space with no triangle of odd perimeter less than $p$ by Lemma \ref{lem.amalgam.no.triangle}. 
     
     If $A$ is empty, then let $d$ be the metric on $C$ which restricts to $d_i$ on $X_i\times X_i$ and such that for all $x_1\in B_1$, $x_2\in B_2$, 
     \[d(x_1,x_2)=\max(p,\mathrm{diam}(B_1,d_1),\mathrm{diam}(B_2,d_2)).\]
     Then $(C,d)$ is an integral metric space, with no triangle of odd perimeter less than $p$. 
     
     In both cases, we have found a strong amalgam of $(B_1,d_1)$ and $(B_2,d_2)$ over $(A,d_A)$, which concludes the proof. 
\end{proof}

In the sequel, given an integer $p\geq 1$, we denote by $\M_p$ the Fraïssé limit of the class of finite integral metric spaces with no triangle of odd perimeter less than $p$. Notice that when $p=1$, $\M_p$ is the integral Urysohn space $\Z\U$. 

\begin{lemma}\label{lem.no.odd.omega.SAP}
    $\M_p$ satisfies $\sigma$-SAP.  
\end{lemma}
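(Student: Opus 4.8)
The plan is to amalgamate $\B_1$ and $\B_2$ over $\A$ by means of the metric amalgam, and then check that the result lands in $\sigma\mathrm{Age}(\M_p)$. So suppose we are given $\A,\B_1,\B_2\in\sigma\mathrm{Age}(\M_p)$ and embeddings $f_i\colon\A\to\B_i$ with $\dom(\B_i)\setminus\dom(f_i(\A))$ finite, and let us produce the amalgam $\mathbf{C}$ and embeddings $g_i$ of Definition \ref{defn:SAPAS}. I would split into two cases according to whether $\A$ is empty.

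If $\A$ is empty, the finiteness hypothesis forces $\B_1$ and $\B_2$ to be finite, and I would simply invoke the strong amalgamation property of the Fraïssé class established in Lemma \ref{lem.no.odd.triangle.Fraisse}. The amalgam produced there (with cross-distances set to a large common value) is a \emph{finite} integral metric space with no triangle of odd perimeter less than $p$, hence lies in $\sigma\mathrm{Age}(\M_p)$; and since the two copies are kept disjoint, the second condition of Definition \ref{defn:SAPAS} holds vacuously.

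If $\A$ is non-empty, I would identify $\A$ with its images $f_1(\A)\subseteq\B_1$ and $f_2(\A)\subseteq\B_2$, glue $\B_1$ and $\B_2$ along this common copy, and equip the union with the metric amalgam $\mathbf{C}\coloneqq\B_1*_\A\B_2$, taking $g_1,g_2$ to be the canonical inclusions. Since the metric amalgam restricts to $d_i$ on each $\B_i$, the maps $g_i$ are embeddings and the triangle commutes by construction, giving the first condition. For the second condition, I would observe that for any $x_1\in\dom(\B_1)\setminus\dom(\A)$ and $x_2\in\dom(\B_2)\setminus\dom(\A)$ one has $d(x_1,x_2)=\min\{d(x_1,y)+d(y,x_2)\colon y\in\A\}\geq 2>0$, because $x_1\neq y\neq x_2$ forces each summand to be at least $1$. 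Hence no point of $\B_1\setminus\A$ is identified with a point of $\B_2\setminus\A$, so the two images overlap only in $\A$, which is exactly the strong-amalgamation requirement.

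It then remains to verify that $\mathbf{C}\in\sigma\mathrm{Age}(\M_p)$. Here Lemma \ref{lem.amalgam.no.triangle}, whose statement covers arbitrary (not merely finite) integral metric spaces, guarantees that $\mathbf{C}$ again has no triangle of odd perimeter less than $p$; as $\mathbf{C}$ is a countable integral metric space, each of its finite substructures then belongs to the Fraïssé class of Lemma \ref{lem.no.odd.triangle.Fraisse}. By universality of the Fraïssé limit — every countable structure all of whose finite substructures lie in the class embeds into the limit — $\mathbf{C}$ embeds into $\M_p$, so $\mathbf{C}\in\sigma\mathrm{Age}(\M_p)$, as needed. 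The genuinely load-bearing step, and the one I would state most carefully, is this final appeal to universality: it is what upgrades the purely metric construction into a countable substructure of $\M_p$, and it hinges on the metric amalgam preserving the forbidden-triangle condition, which is precisely the content of Lemma \ref{lem.amalgam.no.triangle}.
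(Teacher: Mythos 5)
Your proof is correct and follows the same route as the paper's (much terser) argument: handle the empty-base case via the strong amalgamation property of the Fraïssé class from Lemma \ref{lem.no.odd.triangle.Fraisse}, and the non-empty case via the metric amalgam together with Lemma \ref{lem.amalgam.no.triangle}. The extra details you supply (the $d(x_1,x_2)\geq 2$ check for strongness and the appeal to universality of the Fraïssé limit to place $\mathbf{C}$ in $\sigma\mathrm{Age}(\M_p)$) are exactly the steps the paper leaves implicit.
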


\begin{proof}
    Since the Fraïssé class of finite integral metric spaces with no triangle of odd perimeter less than $p$ satisfies the strong amalgamation property, it remains to prove the existence of a strong cofinite amalgam over a countable metric space. The existence of such an amalgam is provided by Lemma \ref{lem.amalgam.no.triangle}.
\end{proof} 

We postpone the proof of the last ingredient to Section \ref{sec.RamseyWEI}.

\begin{lemma}
$\Isom(\M_p)$ weakly eliminates imaginaries.
\end{lemma}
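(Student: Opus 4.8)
The plan is to use the explicit metric amalgam of integral metric spaces as the engine, reducing weak elimination of imaginaries to a concrete generation statement inside $\Aut(\M_p)$. Write $G\coloneqq\Aut(\M_p)$ and $\Omega\coloneqq\dom(\M_p)$. First I would record two preliminary facts. Since the class of finite integral metric spaces with no triangle of odd perimeter less than $p$ has the strong amalgamation property (Lemma \ref{lem.no.odd.triangle.Fraisse}), the group $G$ has no algebraicity, i.e.\ the orbits of every pointwise stabilizer $G_F$ (for $F$ finite) on $\Omega\setminus F$ are infinite; this is the standard correspondence between strong amalgamation and infinite stabilizer orbits. Next, an open proper subgroup $V\leq G$ contains $G_C$ for some finite \emph{nonempty} $C$.

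Given such a $V$, I would introduce the canonical finite set $A\coloneqq\{x\in\Omega\colon V\cdot x\text{ is finite}\}$ and show it witnesses weak elimination. By no algebraicity, every $x\in\Omega\setminus C$ has infinite $G_C$-orbit, hence infinite $V$-orbit, so $A\subseteq C$ and $A$ is finite. As $V$ clearly preserves $A$ setwise, $V\leq\Nm G A$, and the action $V\to\Sym(A)$ has kernel $V\cap G_A\supseteq G_C$, so $[V:V\cap G_A]<+\infty$. It therefore remains to prove $G_A\leq V$, equivalently $W\coloneqq V\cap G_A=G_A$; note $W$ is a clopen subgroup of $G_A$ with $G_C\leq W$, and since $[V:W]<+\infty$, every point of $D\coloneqq C\setminus A$ (having infinite $V$-orbit) still has infinite $W$-orbit.

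The heart of the argument is to prove $W=G_A$ by a generation lemma fuelled by the metric amalgam. Because every $d\in D$ has infinite $W$-orbit, Neumann's separation lemma \cite[Thm.~1]{Birch_1976} yields $w\in W$ with $w(D)\cap D=\emptyset$; moreover, using the canonical amalgam over the nonempty set $A$ (which stays in the class by Lemma \ref{lem.amalgam.no.triangle}) together with ultrahomogeneity, one may take $D'\coloneqq w(D)$ to be a copy of $D$ over $A$ that is metric-amalgam-independent from $D$. Since $W$ is a group containing $G_C=G_{A\cup D}$, it contains $G_{A\cup D'}=wG_{A\cup D}w\inv$. I would then show that $\langle G_{A\cup D},G_{A\cup D'}\rangle$ is dense in $G_A$: this is the base-$A$ relativization of the statement that the stabilizers of two disjoint finite sets generate a dense subgroup of the stabilizer of their (here empty) intersection. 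It is proved directly by a back-and-forth realizing an arbitrary $h\in G_A$ on an arbitrary finite tuple $F$, alternately applying automorphisms fixing $A\cup D$ and $A\cup D'$ to drag $F$ through a position generic (metric-amalgam-independent) over $D\cup D'\cup h(F)$; each intermediate configuration exists in $\M_p$ precisely because the integral metric amalgam respects the no-odd-perimeter constraint (Lemma \ref{lem.amalgam.no.triangle}). Crucially this uses only strong amalgamation and the explicit amalgam, not weak elimination of imaginaries, so there is no circularity with Lemma \ref{lem.caracterisation.noAlg.WEI}. As $\langle G_{A\cup D},G_{A\cup D'}\rangle\leq W$ is dense in $G_A$ and $W$ is closed, we conclude $W=G_A$, i.e.\ $G_A\leq V\leq\Nm G A$ with finite index, which is exactly weak elimination of imaginaries.

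The main obstacle is the density/back-and-forth step above: making precise the \enquote{drag through a generic independent position} while keeping every triangle of odd perimeter above $p$. This is exactly what Lemma \ref{lem.amalgam.no.triangle} is tailored to guarantee, and it is the content abstracted by the notion of a \emph{stationary independence relation} (here, metric-amalgam independence over nonempty subsets). An alternative, more conceptual route would be to verify that this independence relation satisfies the axioms of a stationary independence relation over nonempty bases and then invoke the standard implication that such a relation forces weak elimination of imaginaries, following the techniques of \cite{Slutsky}.
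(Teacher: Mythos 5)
The paper does not actually prove this lemma: it is stated with the remark that the proof uses ``standard techniques (see for instance \cite{Slutsky})'' and is deferred to an upcoming version, so there is no official argument to compare yours against. That said, your strategy --- reduce weak elimination of imaginaries to density of $\langle G_{A\cup D},G_{A\cup D'}\rangle$ in $G_A$ for two copies of $D$ that are metric-amalgam-independent over $A$, the independence being supplied by Lemma \ref{lem.amalgam.no.triangle} --- is exactly the intended one (this is the local stationary independence relation of $\M_p$ over non-empty bases), and your formal reduction ($A\subseteq C$, $V\leq \Nm G A$, $[V:V\cap G_A]<\infty$, every point of $D=C\setminus A$ keeping an infinite $W$-orbit) is correct. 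The density claim itself is true and provable by the back-and-forth you sketch, though as literally phrased (``drag $F$ through a generic position'') it stumbles on the fact that $F$ and $h(F)$ need not be independent from $D$ over $A$; the clean version moves the \emph{base} instead: two applications of existence and stationarity show that the $\langle G_{A\cup D},G_{A\cup D'}\rangle$-orbit of $D$ contains every realization of $\tp(D/A)$, so one conjugates $G_{A\cup D}$ onto some $G_{A\cup D''}$ with $D''\indep_A (F\cup h(F))$ and finishes with a single element there.

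There are, however, two genuine gaps. First, Neumann's lemma \cite[Thm.~1]{Birch_1976} only produces $w\in W$ with $w(D)\cap D=\emptyset$; your ``moreover, one may take $D'=w(D)$ to be metric-amalgam-independent from $D$ over $A$'' is unjustified. The only elements of $W$ you control at that point are $G_{A\cup D}$ and this single $w$, and you cannot fix things by replacing $w(D)$ with another realization of its type over $A\cup D$, since that type already prescribes all distances to $D$ (if $w(D)$ is metrically close to $D$ while $A$ is far, \emph{no} realization of that type is independent from $D$ over $A$). Upgrading ``disjoint'' to ``independent'' needs an extra orbit/iteration argument inside $W$, and this is presumably one of the technicalities the authors allude to. Second, your argument silently assumes $A\neq\emptyset$: if every point has infinite $V$-orbit, the canonical metric amalgam over the empty set is \emph{not} a stationary independence relation (the cross-distances in the amalgamation of Lemma \ref{lem.no.odd.triangle.Fraisse} over $\emptyset$ depend on diameters, so monotonicity and stationarity fail), and the whole mechanism is unavailable; one must separately show that a proper open subgroup always has a point with finite orbit, or prove $\overline{\langle G_C,G_{C'}\rangle}=G$ for disjoint $C,C'$ by a different device (e.g.\ points equidistant from everything in sight). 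Until these two points are filled in, what you have is a correct plan rather than a proof.
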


\begin{proof}
    The class $\mathcal{F}$ of finite integral metric spaces with no triangle of odd perimeter less than $p$ is clearly $2$-symmetric and has the strong amalgamation property by Lemma \ref{lem.no.simplex.strong.amalgamation}. Moreover, $\mathcal{F}_{<}$ has the Ramsey property by \cite[Thm.~8.2]{RamseyExpansions}. So Proposition \ref{prop.RamseyWEI} yields the desired conclusion. 
\end{proof}

As a consequence of the results obtained in this section, Theorem \ref{thm:omega-SAP.dissociated} applies. 

\begin{theorem}
   $\Isom(\M_p)$ is dissociated. 
\end{theorem}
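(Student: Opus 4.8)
The plan is to verify that $\M_p$ satisfies the three hypotheses of Theorem \ref{thm:omega-SAP.dissociated} and then invoke that theorem directly. First, I would observe that $\M_p$ is a countable ultrahomogeneous relational structure: it lives in the relational language $\LL=(R_n)_{n\geq 0}$, and since Lemma \ref{lem.no.odd.triangle.Fraisse} established that the class of finite integral metric spaces with no triangle of odd perimeter less than $p$ is a Fra\"iss\'e class, its Fra\"iss\'e limit $\M_p$ is ultrahomogeneous by the fundamental property of Fra\"iss\'e limits. Equivalently, ultrahomogeneity of $\M_p$ viewed as a structure in the language of integral metric spaces is exactly its metric homogeneity, as recorded in the discussion opening this section.

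Second, I would invoke Lemma \ref{lem.no.odd.omega.SAP}, which asserts precisely that $\M_p$ satisfies $\sigma$-SAP. The content of that lemma rests on Lemma \ref{lem.amalgam.no.triangle}, guaranteeing that the metric amalgam of two integral metric spaces with no triangle of odd perimeter less than $p$, taken over their (possibly infinite) common subspace, again contains no such triangle; this supplies the required strong cofinite amalgam over countable subsets, since the strong amalgamation property for finite subsets is already part of the Fra\"iss\'e class structure.

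Third, I would invoke the preceding lemma asserting that $\Aut(\M_p)$ weakly eliminates imaginaries. With all three hypotheses --- ultrahomogeneity, weak elimination of imaginaries, and $\sigma$-SAP --- in place, Theorem \ref{thm:omega-SAP.dissociated} applies verbatim and yields that $\Aut(\M_p)$ is dissociated.

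Since the statement is an immediate corollary of the general machinery, there is no genuine obstacle at this final step; the substantive work lies in the ingredients established earlier. Among these, the only delicate point is the weak elimination of imaginaries, whose proof is deferred in the present version and relies on standard but technical arguments (cf.\ the reference to \cite{Slutsky}). The $\sigma$-SAP property, by contrast, follows cleanly from the stability of the odd-perimeter constraint under metric amalgamation, so I expect the assembly of the three hypotheses into the final conclusion to be entirely routine.
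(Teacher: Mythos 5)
Your proposal matches the paper's argument exactly: the paper likewise obtains the theorem by combining the ultrahomogeneity of the Fraïssé limit $\M_p$, the $\sigma$-SAP property from Lemma \ref{lem.no.odd.omega.SAP}, and the weak elimination of imaginaries from the preceding lemma, and then applying Theorem \ref{thm:omega-SAP.dissociated}. No gaps; the assembly is routine, as you say, and your identification of weak elimination of imaginaries as the one deferred ingredient is accurate.
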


We close this section by discussing the topological and geometric nature of $\Isom(\M_p)$. 

\begin{proposition}
    $\Isom(\M_p)$ is locally Roelcke-precompact but not coarsely bounded. 
\end{proposition}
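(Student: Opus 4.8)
The plan is to establish the two assertions separately, relying on Rosendal's structural results for automorphism groups of countable metrically homogeneous connected graphs. The statement asserts that $\Aut(\M_p)$ is locally Roelcke precompact but \emph{not} coarsely bounded.

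\textbf{Local Roelcke precompactness.} First I would observe that $\M_p$ (or rather its distance-$1$ graph) is a countable metrically homogeneous connected graph of infinite diameter, where connectedness and infinite diameter follow from the fact that integral metric spaces with no small triangle of odd perimeter realize arbitrarily large distances and that consecutive integers are attained (so adjacent points give edges and the graph metric recovers the original integral metric). Since $\Aut(\M_p)$ is the isometry group of this graph, local Roelcke precompactness is then a direct invocation of Rosendal's theorem \cite[Theorem~3.5]{Rosendal}, quoted in the text, which guarantees that the automorphism group of \emph{any} countable metrically homogeneous connected graph is locally Roelcke precompact. Thus this half requires only verifying the hypotheses of that theorem, namely that $\M_p$ genuinely is metrically homogeneous (already recorded: ultrahomogeneity as a relational structure is equivalent to metric homogeneity) and connected of infinite diameter.

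\textbf{Failure of coarse boundedness.} For the second half I would argue that because the diameter of $\M_p$ is infinite, the group cannot be coarsely bounded. The cleanest route is to exhibit a continuous left-invariant pseudometric on $\Aut(\M_p)$ of infinite diameter; a natural candidate is $\rho(g,h)\coloneqq d_{\M_p}(g(x_0),h(x_0))$ for a fixed basepoint $x_0$, where $d_{\M_p}$ is the graph metric. This $\rho$ is left-invariant and continuous for the permutation-group topology, and since $\M_p$ has infinite diameter and $\Aut(\M_p)$ acts transitively (by metric homogeneity, as every singleton has the same isomorphism type), the orbit of $x_0$ is unbounded, so $\mathrm{diam}_\rho(\Aut(\M_p))=+\infty$. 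By the very definition of coarse boundedness recalled in Section~2, this shows $\Aut(\M_p)$ is not coarsely bounded. Alternatively, one may simply cite the final clause of the sentence preceding Lemma~\ref{lem.no.odd.triangle.Fraisse} in the metrically homogeneous graphs subsection, where Rosendal's result is stated to also give non-coarse-boundedness precisely when the graph has infinite diameter.

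The main obstacle is not conceptual but bookkeeping: one must confirm that $\M_p$ falls squarely within the scope of Rosendal's hypotheses, i.e.\ that it really is a \emph{connected} graph of \emph{infinite} diameter rather than a disconnected or bounded-diameter object. Connectedness amounts to checking that distance $1$ is always realizable between suitably chosen points so that the distance-$1$ graph recovers the full metric, and infinite diameter amounts to noting that the Fraïssé class contains integral metric spaces of arbitrarily large (even) diameter, which the no-odd-triangle constraint does not obstruct. Once these two geometric facts about $\M_p$ are in place, both conclusions follow immediately from the cited theorem, so the proof should be short.
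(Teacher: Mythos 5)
Your proposal is correct and follows essentially the same route as the paper: local Roelcke precompactness is quoted from Rosendal's theorem, and failure of coarse boundedness comes from the transitive isometric action on the unbounded space $\M_p$, which your explicit left-invariant pseudometric $\rho(g,h)=d(g(x_0),h(x_0))$ merely spells out in more detail than the paper does.
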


\begin{proof}
    Let $X$ be the distance-1 graph associated with $\M_p$. It is a straightforward consequence of homogeneity and universality that $\Aut(X) =\Isom(\M_p)$. Thus, \cite[Th.~3.5]{Rosendal} applies and $\Isom(M,d)$ is locally Roelcke-precompact. However, it is not bounded because it acts transitively by isometries on the unbounded space $\M_p$.
\end{proof}

Finally, we can show that this group satisfies the Howe-Moore property. 

\begin{proposition}
$\Isom(\M_p)$ has the Howe-Moore property with respect to coarsely bounded sets.
\end{proposition}

\begin{proof}
    The proof of \cite[Thm.~3.5]{Rosendal} shows that for every $a\in\dom(\M_p)$, the point stabilizer $\Isom(\M_p)_a$ is Roelcke-precompact and therefore coarsely bounded. Since $\Isom(\M_p)$ is dissociated, Theorem \ref{thm.Howe.Moore.dissociated} implies that $\Isom(\M_p)$ has the Howe-Moore property with respect to coarsely bounded sets. 
\end{proof}

\subsubsection{Integral metric spaces with no unit simplex}\label{sec.no.simplex}

Let $r\geq 3$ be an integer. A unit $(r-1)$ simplex in an integral metric space $(X,d)$ is a subset of $X$ consisting of $r$ points that lie mutually at distance $1$. 

\begin{lemma}\label{lem.amalgam.no.simplex}
    Let $r\geq 3$ be an integer. Let $(X_1,d_1)$ and $(X_2,d_2)$ be two integral metric spaces that contain no unit $(r-1)$-simplex. If $X_1\cap X_2$ is non-empty, then the metric amalgam of $(X_1,d_1)$ and $(X_2,d_2)$ over $X_1\cap X_2$ contains no unit $(r-1)$-simplex.
\end{lemma}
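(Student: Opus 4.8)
The plan is to exploit the single key feature of the metric amalgam: it forces all cross-distances to be at least $2$, which makes it impossible for a unit simplex to straddle the two sides. Write $Y\coloneqq X_1\cap X_2$ and let $(X,d)=(X_1,d_1)*_Y(X_2,d_2)$, so that $X=X_1\cup X_2$. The first and central step is to establish that $d(x_1,x_2)\geq 2$ whenever $x_1\in X_1\setminus Y$ and $x_2\in X_2\setminus Y$. Indeed, by the definition of the amalgam, $d(x_1,x_2)=\min\{d(x_1,y)+d(y,x_2)\colon y\in Y\}$; since $x_1\notin Y$ while $y\in Y$ we have $x_1\neq y$, and as the metric is integral and positive this gives $d(x_1,y)\geq 1$, and symmetrically $d(y,x_2)\geq 1$, so every term of the minimum is at least $2$.

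With this inequality in hand, the conclusion is immediate. I would argue by contradiction: suppose $(X,d)$ contains a unit $(r-1)$-simplex $S$, that is, a set of $r$ points lying mutually at distance $1$. By the observation above, $S$ cannot simultaneously contain a point of $X_1\setminus Y$ and a point of $X_2\setminus Y$, for those two points would lie at distance at least $2$ rather than $1$. Hence $S$ is contained entirely in $(X_1\setminus Y)\cup Y=X_1$ or entirely in $(X_2\setminus Y)\cup Y=X_2$. In either case this contradicts the hypothesis that neither $X_1$ nor $X_2$ contains a unit $(r-1)$-simplex, and the lemma follows.

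I expect essentially no obstacle here. In contrast with the analogous statement for triangles of odd perimeter (Lemma \ref{lem.amalgam.no.triangle}), where amalgamated distances can be as small as $1$ and one must carefully track which of several sub-triangles carries the odd perimeter, the unit-simplex condition only concerns distances exactly equal to $1$, and such distances never arise between the two sides of the amalgam. The entire argument therefore rests on the single inequality $d(x_1,x_2)\geq 2$.
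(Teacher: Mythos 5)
Your proof is correct and follows essentially the same route as the paper's: observe that any cross-distance $d(x_1,x_2)$ with $x_1\in X_1\setminus Y$ and $x_2\in X_2\setminus Y$ equals $d(x_1,y)+d(y,x_2)\geq 2$ for some $y\in Y$, so a unit simplex must lie entirely in $X_1$ or in $X_2$, contradicting the hypothesis. No gaps.
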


\begin{proof}
    Let $Y\coloneqq X_1\cap X_2$ and let $X\coloneq (X_1,d_1)*_Y(X_2,d_2)$ be the metric amalgam. Let $F\subseteq X$ be a subset consisting of $r$ points. If $F\subseteq X_1$ or $F\subseteq X_2$, then $F$ is not a unit $(r-1)$ simplex by assumption. Else, there exist two points $x_1,x_2\in F$ with $x_1\in X_1\setminus Y$ and $x_2\in X_2\setminus Y$. Let $y\in Y$ be such that $d(x_1,x_2)=d(x_1,y)+d(y,x_2)$. Then $d(x_1,x_2)>1$ and thus $F$ is not a unit $(r-1)$ simplex. 
\end{proof}

\begin{lemma}\label{lem.no.simplex.strong.amalgamation}
    Let $r\geq 3$ be an integer. Then the class of finite integral metric spaces with no unit $(r-1)$-simplex is a Fraïssé class satisfying the strong amalgamation property.
\end{lemma}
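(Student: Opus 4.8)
The plan is to follow the exact template of the proof of Lemma \ref{lem.no.odd.triangle.Fraisse}, substituting the analogous Lemma \ref{lem.amalgam.no.simplex} for the role played there by Lemma \ref{lem.amalgam.no.triangle}.

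First I would dispatch the hereditary property, which is immediate: any subspace of an integral metric space containing no unit $(r-1)$-simplex clearly contains no such simplex either. Since the language $\LL=(R_n)_{n\geq 0}$ is relational with no constant symbols, I would then observe that the joint embedding property follows from the strong amalgamation property by amalgamating over the empty structure. The whole task thus reduces to establishing strong amalgamation.

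For this, I would fix finite integral metric spaces $(A,d_A)$, $(B_1,d_1)$ and $(B_2,d_2)$ with no unit $(r-1)$-simplex, with $B_1$ and $B_2$ each containing an isometric copy of $A$, form $C$ by identifying the two copies of $A$ inside $B_1\sqcup B_2$, and split into two cases. When $A$ is non-empty, I would take $d$ to be the metric amalgam $(B_1,d_1)*_A(B_2,d_2)$ and invoke Lemma \ref{lem.amalgam.no.simplex} directly to conclude that $(C,d)$ contains no unit $(r-1)$-simplex. When $A$ is empty, I would assign to every pair $x_1\in B_1$, $x_2\in B_2$ the common distance $M\coloneqq\max(2,\mathrm{diam}(B_1,d_1),\mathrm{diam}(B_2,d_2))$, keeping $d_i$ on each $B_i$; a routine triangle-inequality check (using $M\geq\mathrm{diam}(B_i)$) shows this is a genuine integral metric, and since every cross-distance equals $M\geq 2>1$, any $r$-point subset meeting both $B_1$ and $B_2$ contains a pair at distance exceeding $1$, while any $r$-point subset lying inside a single $B_i$ fails to be a unit simplex by hypothesis.

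Since the substantive amalgamation content has already been isolated in Lemma \ref{lem.amalgam.no.simplex}, I do not anticipate any genuine obstacle; the only point requiring a moment's care is the disjoint case $A=\emptyset$, handled by the elementary prescription above. In both cases the resulting $(C,d)$ is a strong amalgam of $(B_1,d_1)$ and $(B_2,d_2)$ over $(A,d_A)$, so the class is a Fraïssé class with the strong amalgamation property.
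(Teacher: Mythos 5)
Your proposal is correct and takes essentially the same approach as the paper, whose proof of this lemma simply states that it is identical to that of Lemma \ref{lem.no.odd.triangle.Fraisse}; your explicit adaptation (invoking Lemma \ref{lem.amalgam.no.simplex} in the non-empty case and setting all cross-distances to $\max(2,\mathrm{diam}(B_1),\mathrm{diam}(B_2))$ in the empty case so that no unit simplex can straddle the two pieces) is exactly the intended argument.
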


\begin{proof}
    The proof is identical to that of Lemma \ref{lem.no.odd.triangle.Fraisse}. 
\end{proof}

In the sequel, given an integer $r\geq 3$, we denote by $\mathbf{N}_r$ the Fraïssé limit of the class of finite integral metric spaces with no unit $(r-1)$-simple.

\begin{lemma}
   $\mathbf{N}_r$ satisfies $\sigma$-SAP. 
\end{lemma}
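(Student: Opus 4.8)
The plan is to follow the same two-ingredient scheme as in the proof of Lemma~\ref{lem.no.odd.omega.SAP}, since only the forbidden configuration has changed (a unit $(r-1)$-simplex in place of a short triangle of odd perimeter). Recall that establishing $\sigma$-SAP means: given $\A,\B_1,\B_2\in\sigma\mathrm{Age}(\mathbf{N}_r)$ and embeddings $f_i\colon\A\to\B_i$ with $\B_i\setminus f_i(\A)$ finite, I must produce $\mathbf{C}\in\sigma\mathrm{Age}(\mathbf{N}_r)$ with embeddings $g_i\colon\B_i\to\mathbf{C}$ satisfying the commutativity and the no-extra-identification conditions of Definition~\ref{defn:SAPAS}.

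I would split according to whether $\A$ is empty. If $\A=\emptyset$, then both $\B_i$ are finite (each coincides with its difference from the empty base), so the strong amalgamation property of the class of finite integral metric spaces with no unit $(r-1)$-simplex, established in the previous lemma, yields the required amalgam at once. If $\A$ is non-empty, I would let $\mathbf{C}$ be the metric amalgam of $\B_1$ and $\B_2$ over the common copy of $\A$ identified via $f_1$ and $f_2$. By Lemma~\ref{lem.amalgam.no.simplex}, this amalgam contains no unit $(r-1)$-simplex; as it is a countable integral metric space all of whose finite substructures lie in the Fraïssé class, it embeds into $\mathbf{N}_r$ by universality, so $\mathbf{C}\in\sigma\mathrm{Age}(\mathbf{N}_r)$. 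Commutativity is built into the metric amalgam, and in that construction any point of $\B_1$ outside the base lies at distance at least $1$ from any point of $\B_2$ outside the base, so $g_1$ and $g_2$ identify no points beyond $\A$; this is precisely the second condition of Definition~\ref{defn:SAPAS}.

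I do not expect a genuine obstacle here, because the delicate part is entirely absorbed by Lemma~\ref{lem.amalgam.no.simplex}: its proof inspects only finite subsets of $r$ points, so the preservation of the no-unit-simplex property is insensitive to the cardinality of the base $Y=\A$. This is exactly what upgrades ordinary strong amalgamation over finite sets to amalgamation over an arbitrary countable set, and it is the only place where the passage from finite to countable bases could have caused trouble. The sole routine check is that the metric amalgam is well defined over a non-empty base of any cardinality, which is immediate since the minimum defining the cross-distances ranges over the non-empty set $Y$.
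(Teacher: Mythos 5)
Your proposal is correct and follows essentially the same route as the paper: the empty-base case is handled by the strong amalgamation property of the finite class, and the non-empty-base case by the metric amalgam together with Lemma \ref{lem.amalgam.no.simplex}, whose proof is indeed indifferent to the cardinality of the base. The paper merely states this more tersely; your additional checks (membership of the amalgam in $\sigma\mathrm{Age}(\mathbf{N}_r)$ and the absence of extra identifications) are the routine verifications it leaves implicit.
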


\begin{proof}
 As in Lemma \ref{lem.no.odd.omega.SAP}, the proof follows from the fact that the metric amalgam over a non-empty subset preserves the property of having no unit $(r-1)$-simplex, see Lemma \ref{lem.amalgam.no.simplex}. 
\end{proof}

Again, the last ingredient will be provided by the results in Section \ref{sec.RamseyWEI}.

\begin{lemma}
    $\Isom(\mathbf{N}_r)$ weakly eliminates imaginaries. 
\end{lemma}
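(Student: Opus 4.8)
The plan is to establish weak elimination of imaginaries in the group-theoretic form used throughout this paper: every open subgroup $V\leq G\coloneqq\Aut(\mathbf{N}_r)$ should contain the pointwise stabilizer $G_A$ of a \emph{finite} set $A$ with finite index. First I would record that, since the class of finite integral metric spaces with no unit $(r-1)$-simplex has strong amalgamation, $\mathbf{N}_r$ has no algebraicity: the orbits of $G_E$ on $\dom(\mathbf{N}_r)\setminus E$ are infinite for every finite $E$. By Lemma \ref{lem.caracterisation.noAlg.WEI} it then suffices to exhibit, for each open $V$, a finite $A$ with $G_A\leq V\leq \Nm{G}{A}$. Fix such a $V$; by openness $G_F\leq V$ for some finite $F$. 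The natural candidate is
\[
A\coloneqq\{x\in\dom(\mathbf{N}_r)\colon \text{the }V\text{-orbit of }x\text{ is finite}\}.
\]
Once $A$ is shown to be finite with $G_A\leq V$, the inclusion $V\leq\Nm{G}{A}$ is immediate (elements of $V$ permute $A$), and $[V:G_A]\leq[\Nm{G}{A}:G_A]<+\infty$ because $\Nm{G}{A}/G_A$ embeds in the finite group $\Sym(A)$; this yields the lemma.

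The core of the argument, which is identical to the proof of the analogous statement for $\M_p$ in the previous section, exploits the \emph{canonical} metric amalgam. By Lemma \ref{lem.amalgam.no.simplex}, any two members of $\sigma\mathrm{Age}(\mathbf{N}_r)$ sharing a common subspace $Y$ amalgamate over $Y$ via the geodesic distance $d(x_1,x_2)=\min\{d(x_1,y)+d(y,x_2)\colon y\in Y\}$, and this amalgam again omits the unit $(r-1)$-simplex. This endows $\mathbf{N}_r$ with a stationary independence relation $\indep$: the type $\tp(\bar b)$ of a tuple over $C\cup\bar d$ is uniquely determined by $\tp(\bar b)$ over $C$ as soon as $\bar b\indep_C\bar d$. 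I would then run the standard coding argument (see \cite{Slutsky}): the open subgroup $V$ is determined by a $G_F$-invariant imaginary, and stationarity of $\indep$ forces every such imaginary to be interdefinable, up to finitely many conjugates, with a finite real tuple. Concretely, if $x$ has finite $V$-orbit but does not lie in the canonical base of this imaginary, then stationarity lets one move $x$ by elements of $G_F\leq V$ to any realisation of $\tp(x)$ over $F$ that is $\indep$ from a witnessing configuration; no algebraicity makes the set of such realisations infinite, contradicting finiteness of the $V$-orbit of $x$. This simultaneously bounds $A$ and shows $G_A\leq V$.

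The main obstacle is precisely the finiteness of $A$, that is, the fact that $V$ cannot move infinitely many points while fixing $F$ pointwise and still be an open subgroup coded by finite data. Controlling this requires the back-and-forth and compactness bookkeeping of Slutsky's method applied to the \emph{non}-$\aleph_0$-categorical structure $\mathbf{N}_r$, whose tree of isomorphism types is infinitely branching; it is for these technicalities that we defer the complete argument to a later version. Since the only features used are strong amalgamation and the existence of the canonical metric amalgam over arbitrary common subspaces, both established for $\mathbf{N}_r$ in Lemma \ref{lem.amalgam.no.simplex}, the proof is word-for-word the one for $\M_p$, with ``no triangle of odd perimeter less than $p$'' replaced throughout by ``no unit $(r-1)$-simplex''.
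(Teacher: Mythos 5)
The paper does not actually prove this lemma: it states that ``the same techniques as the ones mentioned in the previous section'' (namely the standard methods of \cite{Slutsky}, via the canonical metric amalgam) apply, and defers the details to an upcoming version. So there is no written proof to compare your argument against. Your outline follows exactly the route the paper gestures at: reduce weak elimination of imaginaries to finding, for each open subgroup $V$ with $G_F\leq V$, a finite set $A$ with $G_A\leq V\leq \Nm{G}{A}$, and then use the canonical geodesic amalgam of Lemma \ref{lem.amalgam.no.simplex} as a stationary independence relation to run a Slutsky-type coding argument. That is the right strategy, and your observation that the entire $\M_p$ argument transfers verbatim (replacing ``no small odd triangle'' by ``no unit $(r-1)$-simplex'') is exactly the point the authors are making.

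That said, as submitted this is a proof sketch with the same gap as the paper, not a proof, and you have misplaced where the difficulty lies. The finiteness of your candidate set $A=\{x\colon V\cdot x\text{ is finite}\}$ is \emph{not} the obstacle: since $G_F\leq V$ and $\mathbf{N}_r$ has no algebraicity, every $x\notin F$ has infinite $G_F$-orbit, hence infinite $V$-orbit, so $A\subseteq F$ is finite for free, and $V\leq\Nm{G}{A}$ with $[\Nm{G}{A}:G_A]<\infty$ follows immediately. The genuine content — the step your sketch only gestures at in the clause ``this simultaneously bounds $A$ and shows $G_A\leq V$'' — is precisely the inclusion $G_A\leq V$, i.e.\ that $V$ is no smaller than the pointwise stabilizer of the set of its finite orbits. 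This is where the stationarity of the geodesic amalgam and the back-and-forth argument are actually needed (one must show that any $g\in G_A$ can be written using elements of $V$, typically by moving configurations to independent copies over $A$), and neither your proposal nor the paper carries this out. You should also be a little careful invoking a \emph{global} stationary independence relation: the canonical amalgam of Lemma \ref{lem.amalgam.no.simplex} is only defined over non-empty base sets, so the argument must be run with a local independence relation, which suffices for weak (as opposed to full) elimination of imaginaries but should be said explicitly.
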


\begin{proof}
    Again, the class $\mathcal{F}$ of finite integral metric spaces with no unit $(r-1)$-simple is $2$-symmetric has the strong amalgamation property by Lemma \ref{lem.no.simplex.strong.amalgamation}. Moreover, $\mathcal{F}_{<}$ has the Ramsey property by \cite[Thm.~8.2]{RamseyExpansions}. So Proposition \ref{prop.RamseyWEI} yields the desired conclusion. 
\end{proof}

Therefore, Theorem \ref{thm:omega-SAP.dissociated} applies. 

\begin{theorem}\label{thm.examples.metrically.homogeneous.dissociated}
     $\Isom(\mathbf{N}_r)$ is dissociated. 
\end{theorem}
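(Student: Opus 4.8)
The plan is to verify that $\mathbf{N}_r$ meets the three hypotheses of Theorem \ref{thm:omega-SAP.dissociated}---namely that it is a countable ultrahomogeneous relational structure, that $\Aut(\mathbf{N}_r)$ weakly eliminates imaginaries, and that $\mathbf{N}_r$ satisfies $\sigma$-SAP---and then simply invoke that theorem to obtain dissociation. Since all three ingredients have been assembled in the preceding lemmas, this final step is a direct application rather than a new argument.

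First I would record that $\mathbf{N}_r$ is countable, relational, and ultrahomogeneous. This is built into its definition: $\mathbf{N}_r$ is the Fraïssé limit of the Fraïssé class of finite integral metric spaces with no unit $(r-1)$-simplex, viewed in the relational language $\LL=(R_n)_{n\geq 0}$ of integral metric spaces. The preceding lemma established that this class has the strong amalgamation property (in particular the amalgamation and joint embedding properties), so its Fraïssé limit exists and is ultrahomogeneous; equivalently, $\mathbf{N}_r$ is metrically homogeneous as an integral metric space.

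Next I would cite the two lemmas stated just before the theorem. The lemma that $\mathbf{N}_r$ satisfies $\sigma$-SAP supplies the strong cofinite amalgamation property over countable subsets; its justification is that the metric amalgam over a non-empty subspace preserves the property of containing no unit $(r-1)$-simplex, which is exactly Lemma \ref{lem.amalgam.no.simplex}, and this amalgam is the canonical strong cofinite amalgam demanded by Definition \ref{defn:SAPAS}. The subsequent lemma asserts that $\Aut(\mathbf{N}_r)$ weakly eliminates imaginaries. With ultrahomogeneity, $\sigma$-SAP, and weak elimination of imaginaries in hand, Theorem \ref{thm:omega-SAP.dissociated} applies verbatim and yields that $\Aut(\mathbf{N}_r)$ is dissociated.

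I do not expect any real obstacle at this final stage, precisely because the substantive work lives in the earlier lemmas. The only genuinely nontrivial inputs are $\sigma$-SAP (reduced, via the metric amalgam, to the geometric stability property of Lemma \ref{lem.amalgam.no.simplex}) and the weak elimination of imaginaries, the latter being the one ingredient whose proof is deferred in the excerpt; so if any gap remains it is located there rather than in the application of Theorem \ref{thm:omega-SAP.dissociated}.
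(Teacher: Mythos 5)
Your proposal is correct and follows exactly the paper's route: the paper likewise establishes the strong amalgamation property for the class, deduces $\sigma$-SAP for $\mathbf{N}_r$ from the metric amalgam (Lemma \ref{lem.amalgam.no.simplex}), cites the (deferred) weak elimination of imaginaries for $\Aut(\mathbf{N}_r)$, and then applies Theorem \ref{thm:omega-SAP.dissociated}. Your closing remark correctly locates the only remaining gap in the deferred proof of weak elimination of imaginaries, which the paper also acknowledges.
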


As in the previous section, we obtain the following two results. Since the proofs are identical, we omit them. 

\begin{proposition}
    $\Isom(\mathbf{N}_r)$ is locally Roelcke-precompact but not coarsely bounded. 
\end{proposition}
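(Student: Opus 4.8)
The plan is to follow the proof of the corresponding statement for $\M_p$ essentially verbatim, treating the two assertions separately. The starting observation is that $\Aut(\mathbf{N}_r)$ coincides with the isometry group of $\mathbf{N}_r$ viewed as an integral metric space, equivalently with the automorphism group of its distance-$1$ graph; this graph is the object to which Rosendal's results apply, so the whole argument reduces to checking that $\mathbf{N}_r$ fits the hypotheses of \cite[Thm.~3.5]{Rosendal} and that its diameter is infinite.

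For local Roelcke precompactness I would invoke \cite[Thm.~3.5]{Rosendal}, which states that the automorphism group of a countable metrically homogeneous connected graph is locally Roelcke precompact. The point to verify is that $\mathbf{N}_r$ is genuinely such a graph: its distance-$1$ graph must be connected and its graph metric must recover the original integral metric. This amounts to producing, between any two points at distance $n$, a geodesic chain $x_0,\dots,x_n$ with $d(x_i,x_{i+1})=1$. Such a chain exists by ultrahomogeneity of $\mathbf{N}_r$ together with the observation that the finite space $\{x_0,\dots,x_n\}$ equipped with $d(x_i,x_j)=|i-j|$ contains no unit $(r-1)$-simplex (no three of its points are pairwise at distance $1$), hence belongs to the Fraïssé class and embeds into $\mathbf{N}_r$ over the two prescribed endpoints. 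Metric homogeneity of the graph is then exactly ultrahomogeneity of $\mathbf{N}_r$ as a relational structure.

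For the failure of coarse boundedness I would argue, as for $\M_p$, that $\Aut(\mathbf{N}_r)$ acts transitively by isometries on the space $\mathbf{N}_r$ and that this space has infinite diameter. Unboundedness is immediate: for every $n\geq 1$, the two-point integral metric space at distance $n$ lies in the Fraïssé class and therefore embeds into $\mathbf{N}_r$. A transitive isometric action on an unbounded metric space yields, through the orbit map at a fixed basepoint, a continuous left-invariant pseudometric on the group of infinite diameter, so the group cannot be coarsely bounded.

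The only genuine subtlety, and thus the main obstacle, is the verification in the second paragraph that the distance-$1$ graph is connected with the correct graph metric; once this geodesic-realization property is in hand, both halves of the statement follow at once from Rosendal's theorem and the unboundedness of $\mathbf{N}_r$. This is precisely the step where the ``no unit $(r-1)$-simplex'' constraint must be shown compatible with the existence of long unit geodesics, which it is, since a geodesic chain never produces even a triangle of mutually unit distances, let alone a full simplex.
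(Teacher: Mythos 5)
Your proof is correct and follows the same route as the paper, which simply invokes \cite[Thm.~3.5]{Rosendal} for local Roelcke precompactness and the transitive isometric action on the unbounded space $\mathbf{N}_r$ for the failure of coarse boundedness. The only difference is that you explicitly verify that the distance-$1$ graph of $\mathbf{N}_r$ is connected with the correct graph metric via geodesic chains avoiding unit simplices — a detail the paper leaves implicit but which is indeed needed to apply Rosendal's theorem.
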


\begin{proposition}
    $\Isom(\mathbf{N}_r)$ has the Howe-Moore property with respect to coarsely bounded sets. 
\end{proposition}

\subsection{Diversities}

We now consider a generalization of the concept of metric space. Given a set $X$, we denote by $\PP_{\mathrm{fin}}(X)$ the set of finite subsets of $X$. A \emph{diversity} is a couple $(X,\delta)$ where $X$ is a set and $\delta \colon \PP_{\mathrm{fin}}(X)\to\R_+$ is a map satisfying:
\begin{itemize}
    \item $\delta(A)=0$ if and only if $\lvert A\rvert \leq 1$,
    \item if $B\neq \emptyset$, then $\delta(A\cup C)\leq \delta(A\cup B)+\delta(B\cup C)$. 
\end{itemize}
We call the map $\delta$ a \textit{diversity map}. Note that a metric space always defines a diversity, by seeing the diameter as a diversity map. Conversely, a diversity also defines a metric space but contains more information.

The above axioms imply that a diversity map $\delta$ is monotonous: if $A\subseteq B$, then $\delta(A)\leq \delta(B)$. Moreover, it is sublinear on sets with non-empty intersection: if $A\cap B\neq\emptyset$, then $\delta(A\cup B)\leq \delta(A)+\delta(B)$. We say that a diversity is integral (resp. rational) if the diversity map takes only integral (resp. rational) values. In this section, we will discuss diversities constructed by means of a Fraïssé limit. We first focus on diversities with integral values. 
\begin{lemma}
    The class of finite  diversities whose diversity map takes only integral values is a Fraïssé class, the limit of which we denote by $\Z\D$. 
\end{lemma}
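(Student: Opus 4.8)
The plan is to verify the four defining properties of a Fraïssé class. The natural language for integral diversities is the countable relational language with one symmetric relation of each arity $n\geq 2$ and each value $k\geq 1$ (asserting that a given $n$-tuple has diversity $k$), so there are only countably many isomorphism types of finite integral diversities and the class is countable. The \textbf{hereditary property} is immediate: if $(X,\delta)$ is a finite integral diversity and $Y\subseteq X$, then the restriction of $\delta$ to $\PP_{\mathrm{fin}}(Y)$ still takes integer values, and both diversity axioms, being universally quantified over subsets, are inherited by $Y$. As in the proof of Lemma \ref{lem.no.odd.triangle.Fraisse}, the \textbf{joint embedding property} will follow from the \textbf{amalgamation property} applied over the empty diversity, so the whole content lies in establishing amalgamation.

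For amalgamation, let $\A,\B_1,\B_2$ be finite integral diversities with embeddings $f_i\colon\A\hookrightarrow\B_i$; identifying $\A$ with its images, set $Y\coloneqq\dom(\A)$ and let $X\coloneqq\dom(\B_1)\cup\dom(\B_2)$ be the pushout of the underlying sets, so that $\dom(\B_1)\cap\dom(\B_2)=Y$ and $\delta_1,\delta_2$ agree on $\PP_{\mathrm{fin}}(Y)$. When $Y=\emptyset$ (the case needed for joint embedding) I would glue $\B_1$ and $\B_2$ disjointly and, for a set $S$ meeting both parts, declare $\delta(S)\coloneqq\delta_1(S\cap\dom(\B_1))+\delta_2(S\cap\dom(\B_2))+N$ for a large integer $N$; exactly as in the empty case of Lemma \ref{lem.no.odd.triangle.Fraisse}, the additive constant dominates and forces the triangle axiom. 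When $Y\neq\emptyset$ I would use the diversity analogue of the metric amalgam: for finite $S\subseteq X$, set
\[
\delta(S)\coloneqq\min\sum_{k=1}^{m}\delta_{\iota(k)}(P_k),
\]
the minimum ranging over all finite families of pieces $P_1,\dots,P_m$, each $P_k$ a finite subset of $\dom(\B_{\iota(k)})$ for some $\iota(k)\in\{1,2\}$, such that $S\subseteq\bigcup_k P_k$ and the family is connected for the relation ``$P_k$ and $P_l$ share a point'' (two pieces sitting in different factors can only meet inside $Y$). This is the smallest candidate diversity on $X$ restricting to $\delta_1$ and $\delta_2$, and since everything is finite the minimum is attained and is an integer.

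Three things then need checking, of which only the last is delicate. First, $\delta$ satisfies $\delta(A\cup C)\leq\delta(A\cup B)+\delta(B\cup C)$ for $B\neq\emptyset$: concatenating an optimal connected cover of $A\cup B$ with one of $B\cup C$ yields a connected cover of $A\cup C$ (they share a point of $B$), so the inequality falls out of the definition. Second, integrality holds by construction. Third, and this is the \textbf{main obstacle}, one must prove the \emph{no-shortcut property}: $\delta(S)=\delta_i(S)$ for $S\subseteq\dom(\B_i)$, so that $\delta$ genuinely extends both diversities, with the non-degeneracy $\delta(S)=0\iff|S|\leq 1$ as a byproduct. The inequality $\delta(S)\leq\delta_i(S)$ is trivial (take the single piece $S$); for the converse, say $S\subseteq\dom(\B_1)$, I would take an optimal connected cover and replace any maximal connected sub-family lying in $\dom(\B_2)$, covering some $Q\subseteq\dom(\B_2)$, by the single piece $Q\cap Y\subseteq Y$. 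This does not increase the cost, since such a sub-family is a connected cover of $Q$ inside $\B_2$ and hence costs at least $\delta_2(Q)\geq\delta_2(Q\cap Y)=\delta_1(Q\cap Y)$ by monotonicity and agreement on $Y$, and it does not break connectivity, since the excursion was attached to the rest only through points of $Q\cap Y$. After all such replacements one has a connected cover of $S$ by pieces of $\B_1$ alone, whose total cost is at least $\delta_1(S)$ because $\delta_1$, being a diversity, satisfies $\delta_1(S)\leq\delta_1(\bigcup_k P_k)\leq\sum_k\delta_1(P_k)$.

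The genuinely delicate point is thus making this ``projection of a $\B_2$-excursion onto its $Y$-trace'' rigorous while tracking connectivity; it is the exact diversity counterpart of checking, in the metric amalgam, that a shortest path alternating between the two factors can always be straightened so as to stay in a single factor. Once the no-shortcut property is established, $\delta$ is a finite integral diversity restricting to $\delta_1$ and $\delta_2$, the inclusions $\B_i\hookrightarrow(X,\delta)$ are embeddings, and the square commutes over $\A$; this proves amalgamation and hence that the class is Fraïssé, with Fraïssé limit $\Z\D$.
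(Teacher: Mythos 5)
Your proposal is correct and follows essentially the same route as the paper: the paper defers the full verification to H\"allb\"ack's Proposition 3.10 but records exactly your connected-cover amalgam as its Definition \ref{def.amalgam.diversities}, noting that amalgamation is the only non-obvious point. Your sketch of the no-shortcut property (collapsing a maximal $\B_2$-excursion to its trace on $Y$, using sublinearity on intersecting sets and monotonicity) is the right way to fill in the detail the paper leaves to the reference.
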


We refer to \cite[Prop.~3.10]{Hallbäck} for a proof of this lemma. The only non-obvious part is the amalgamation property. As for integral metric spaces, it turns out that there is a natural way of  amalgamating integral diversities (of any cardinality). Let $X$ be a set. A collection $E_1,\dots, E_n$ of finite subsets of $X$ is \emph{connected} if the intersection graph associated with the $E_i$'s is connected. 

\begin{definition}\label{def.amalgam.diversities}
    Let $(X_1,\delta_1)$ and $(X_2,\delta_2)$ be two integral diversities such that the intersection $Y\coloneqq X_1\cap X_2$ is non-empty. Assume that $d_1$ and $d_2$ coincide on $\PP_{\mathrm{fin}}(Y)$. Let $X=X_1\cup X_2$ and define a map $\delta \colon \PP_{\mathrm{fin}}(X)\to\N$ as follows:
    \[\delta(A)\coloneqq\min\{\sum_{i=1}^n\delta_{k_i}(E_i)\colon E_1,\dots,E_n\text{ is connected}, A\subseteq\bigcup_{i=1}^nE_i\text{ and }E_i\subseteq X_{k_i}\}.\]
    Then $(X,\delta)$ is an integral diversity such that the restriction of $\delta$ to $\PP_{\mathrm{fin}}(X_i)\subseteq\PP_{\mathrm{fin}}(X)$ coincides with $\delta_i$ for every $i\in\{1,2\}$. It is called the \emph{diversity amalgam} of $(X_1,\delta_1)$ and $(X_2,\delta_2)$ over $Y$. 
\end{definition}

As before, we use this amalgamation to obtain $\sigma$-SAP.

\begin{lemma}
    $\Z\D$ satisfies $\sigma$-SAP. 
\end{lemma}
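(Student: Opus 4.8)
The plan is to proceed exactly as in the proof of Lemma~\ref{lem.no.odd.omega.SAP} for $\M_p$, reducing $\sigma$-SAP to two already-available ingredients: the strong amalgamation property of the Fraïssé class of finite integral diversities, and the diversity amalgam over a common countable subset provided by Definition~\ref{def.amalgam.diversities}. So I would take an instance of the hypothesis of Definition~\ref{defn:SAPAS}: structures $\A,\B_1,\B_2\in\sigma\mathrm{Age}(\Z\D)$ and embeddings $f_i\colon\A\hookrightarrow\B_i$ with $\lvert\dom(\B_i)\setminus\dom(f_i(\A))\rvert<+\infty$, and split according to whether $\A$ is finite or infinite.

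If $\A$ is finite, then $\B_1$ and $\B_2$ are finite as well, and the required amalgam $\mathbf{C}$ together with condition~(2) is precisely a strong (disjoint) amalgam, which exists because the class of finite integral diversities has the strong amalgamation property. If $\A$ is infinite, I would use $f_1,f_2$ to realize $\B_1,\B_2$ as integral diversities $(X_1,\delta_1)$ and $(X_2,\delta_2)$ glued along a common, non-empty copy $Y$ of $\A$, with $\delta_1,\delta_2$ agreeing on $\PP_{\mathrm{fin}}(Y)$, and then set $\mathbf{C}\coloneqq (X_1,\delta_1)*_Y(X_2,\delta_2)$ with $g_1,g_2$ the inclusions into $X_1\cup X_2$. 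By Definition~\ref{def.amalgam.diversities}, $\delta$ restricts to $\delta_i$ on $\PP_{\mathrm{fin}}(X_i)$, so the $g_i$ are genuine embeddings and the diagram commutes, giving condition~(1). Since the underlying set is the set-theoretic union identified precisely along $Y$, any coincidence $g_1(b_1)=g_2(b_2)$ forces $b_1,b_2$ to lie in $Y$ with a common preimage $a\in\dom(\A)$, which is condition~(2). Finally, $\mathbf{C}$ is a countable integral diversity, so all its finite substructures belong to the Fraïssé class and it embeds into $\Z\D$, placing it in $\sigma\mathrm{Age}(\Z\D)$.

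The only genuine content therefore sits inside Definition~\ref{def.amalgam.diversities}, and the main point to verify—already asserted there and attributable to the amalgamation arguments behind $\Z\D$ being a Fraïssé limit—is that the explicit formula for $\delta$ really defines an integral diversity: one must check the two diversity axioms, the substantive one being the triangle-type inequality $\delta(A\cup C)\leq\delta(A\cup B)+\delta(B\cup C)$ for $B\neq\emptyset$, which follows by concatenating optimal connected covers of $A\cup B$ and $B\cup C$ into a connected cover of $A\cup C$. Once this and the fact that $\delta$ restricts to each $\delta_i$ are in hand, the rest is bookkeeping, and I expect no further obstacle, the finite-base case being handled uniformly by the strong amalgamation property of the class.
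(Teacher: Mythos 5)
Your proposal is correct and follows essentially the same route as the paper, which simply invokes the existence of the diversity amalgam of Definition~\ref{def.amalgam.diversities} (with the finite-base case covered by the strong amalgamation property of the class of finite integral diversities, exactly as in the proof of Lemma~\ref{lem.no.odd.omega.SAP}). Your additional verification that the explicit formula for $\delta$ satisfies the diversity axioms and restricts to each $\delta_i$ is a welcome elaboration of a point the paper leaves implicit, but it does not change the argument.
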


\begin{proof}
This is a consequence of the existence of the amalgam in Definition \ref{def.amalgam.diversities}
\end{proof}

\begin{remark}
    If $\Q\D$ denotes the Fraïssé limit of finite diversities whose diversity map takes only rational values, then $\Q\D$ does \emph{not} satisfy $\sigma$-SAP. The reason is the same as the one discussed in Remark \ref{rem.omega.SAP.examples} for the rational Urysohn space $\Q\U$. 
\end{remark}

Again, weak elimination of imaginaries is technical, but the results of Section \ref{sec.RamseyWEI} allow us to prove that it holds for $\Z\D$ and $\Q\D$.

\begin{lemma}
    $\Aut(\Z\D)$ and $\Aut(\Q\D)$ weakly eliminate imaginaries.
\end{lemma}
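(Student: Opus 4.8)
The plan is to read off a \emph{stationary independence relation} from the canonical diversity amalgam of Definition \ref{def.amalgam.diversities} and to use it to control open subgroups. Write $\M$ for either $\Z\D$ or $\Q\D$ and $\Omega \coloneqq \dom(\M)$; by ultrahomogeneity $\Aut(\M)$ acts transitively, hence without fixed point, so by Lemma \ref{lem.caracterisation.noAlg.WEI} it will be enough to produce, for every open subgroup $V \leq \Aut(\M)$, a finite set $A$ with $G_A \leq V$ of finite index. For substructures $X_1, X_2$ of $\M$ with a common substructure $Y$, declare $X_1 \indep_Y X_2$ to mean that the diversity $\M$ induces on $X_1 \cup X_2$ is the diversity amalgam of the substructures generated by $X_1$ and $X_2$ over $Y$, i.e.\ that every value $\delta(E)$ for $E$ meeting both sides is computed by the $\min$-formula of Definition \ref{def.amalgam.diversities}. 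This is the relation of joining the two sides as freely as the diversity axioms permit.

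First I would verify that $\indep$ is a stationary independence relation. Invariance under $\Aut(\M)$ is immediate; symmetry is visible from the symmetric role of the two sides in the $\min$-formula; monotonicity and transitivity follow from the fact that iterated canonical amalgams agree, which is a direct computation with the $\min$-formula; and existence is exactly the amalgamation statement for $\Z\D$/$\Q\D$, realized inside $\M$ using ultrahomogeneity. The decisive axiom is \emph{stationarity}: if $X_2$ and $X_2'$ are two copies of the same structure over $Y$, each independent from $X_1$ over $Y$, then they have the same type over $X_1$. This holds because the $\min$-formula determines \emph{all} the diversities between $X_1$ and either copy from the data of $Y$ alone, so the two free amalgams are isomorphic over $X_1$, and ultrahomogeneity promotes this to an automorphism of $\M$ fixing $X_1$ and carrying $X_2$ to $X_2'$.

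With $\indep$ in hand, I would deduce the conclusion as follows. Fix an open subgroup $V$ and choose a finite $A \subseteq \Omega$ with $G_A \leq V$ of minimal cardinality (possible since $V$ is open). It suffices to show that $A$ is setwise fixed by $V$: then $V \leq \Nm G A$ and $[V : G_A] \leq [\Nm G A : G_A] \leq \lvert \Sym(A) \rvert < +\infty$. Suppose instead that some $v \in V$ sends $A$ to $A' \neq A$. Then $G_{A'} = v G_A v^{-1} \leq V$, with $\lvert A'\rvert = \lvert A\rvert$, so $\langle G_A, G_{A'}\rangle \leq V$. Using existence and stationarity of $\indep$ one shows that this subgroup is exactly $G_{A \cap A'}$: any point outside $A \cap A'$ can be displaced by an element built from independent copies of $A$ and $A'$ over $A \cap A'$, while any tuple left fixed by both $G_A$ and $G_{A'}$ is forced by stationarity to be interalgebraic with $A \cap A'$, and hence, by the absence of algebraicity that the $\min$-formula also yields, to lie in $A \cap A'$. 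Since $\lvert A \cap A'\rvert < \lvert A\rvert$, this contradicts the minimality of $A$.

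The hard part will be the rigorous verification of stationarity and its interaction with algebraicity. Concretely, one must check that the $\min$-formula always returns a genuine integral (resp.\ rational) diversity without accidental coincidences that would create finite orbits outside the base, which is where integrality of $\Z\D$ — and a more delicate density argument for $\Q\D$, in the spirit of Remark \ref{rem.omega.SAP.examples} — is needed. Once stationarity is secured, the argument above simultaneously yields no algebraicity and the identity $\langle G_A, G_B\rangle = G_{A \cap B}$ for all finite $A, B$, which is precisely condition \ref{item.noAlgWEI.2} of Lemma \ref{lem.caracterisation.noAlg.WEI} and therefore establishes weak elimination of imaginaries for both $\Aut(\Z\D)$ and $\Aut(\Q\D)$.
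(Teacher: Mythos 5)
First, a caveat about the comparison: the paper does not actually prove this lemma --- it is stated with only the comment that weak elimination of imaginaries is ``technical'' but follows from standard methods, with details deferred to a later version (the same device is used for $\Aut(\M_p)$ and $\Aut(\mathbf{N}_r)$, with a pointer to \cite{Slutsky}). Your strategy --- extract a stationary independence relation over finite sets from the canonical amalgam of Definition \ref{def.amalgam.diversities}, derive no algebraicity from strong amalgamation, and run a minimal-support argument on open subgroups to verify the criterion of Lemma \ref{lem.caracterisation.noAlg.WEI} --- is precisely the standard method being alluded to, and the axioms you propose to check (invariance, existence, stationarity) are the right ones. One of your worries is also unnecessary: the failure of $\sigma$-SAP for $\Q\D$ recorded in the paper concerns amalgamation over \emph{infinite} bases only, whereas weak elimination of imaginaries never requires independence over anything but finite sets, where the $\min$-formula behaves identically for rational and integral values; no extra density argument in the spirit of Remark \ref{rem.omega.SAP.examples} is needed.

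The genuine gap is in the final step. Having chosen $A$ of minimal cardinality with $G_A \le V$ and taken $v \in V$ with $A' = vA \ne A$, you assert $\langle G_A, G_{A'}\rangle = G_{A\cap A'}$ and justify it by saying that points outside $A\cap A'$ can be displaced and that fixed tuples are interalgebraic with $A \cap A'$. What the contradiction with minimality actually requires is the containment $G_{A\cap A'} \le \langle G_A, G_{A'}\rangle$, and ``$\langle G_A, G_{A'}\rangle$ fixes no point outside $A\cap A'$'' does not yield it: a subgroup can act with no fixed points and still be much smaller than $G_{A\cap A'}$. Moreover, $A$ and $vA$ need not be in independent position over their intersection, so stationarity says nothing directly about the configuration $A\cup A'$. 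The standard repair has two steps: (i) prove $G_{A\cap B}\le\langle G_A,G_B\rangle$ for pairs satisfying $B\indep_{A\cap B}A$, via an approximation argument that uses existence and stationarity to replace $B$ by copies independent from any prescribed finite set (openness of $\langle G_A,G_B\rangle$ makes it closed, so density suffices); and (ii) reduce the general pair $A, vA$ to this case by first conjugating $vA$ into independent position by an element of $G_A\le V$. Step (i) is the technical heart that the authors themselves defer, and it is absent from your proposal; without it the argument is circular, since the identity $\langle G_A,G_B\rangle=G_{A\cap B}$ for all finite $A,B$ is, by Lemma \ref{lem.caracterisation.noAlg.WEI}, equivalent to the conjunction being proved.
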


\begin{proof}
    Apply Proposition \ref{prop.RamseyWEI}, the hypotheses of which are satisfied by Example 7.9 and Theorem 6.15 of \cite{HubickaKonecny}.
\end{proof}

Therefore, Theorem \ref{thm:omega-SAP.dissociated} applies to $\Z\D$.

\begin{theorem}\label{thm.Aut(ZD).dissociated}
    $\Aut(\Z\D)$ is dissociated. 
\end{theorem}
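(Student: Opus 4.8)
The plan is to apply Theorem \ref{thm:omega-SAP.dissociated}, which guarantees that the automorphism group of a countable ultrahomogeneous relational structure is dissociated as soon as the structure weakly eliminates imaginaries and satisfies $\sigma$-SAP. Thus it suffices to verify that $\Z\D$ meets all three hypotheses, after which the conclusion is immediate.

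First I would record that $\Z\D$ is a countable ultrahomogeneous relational structure. Since its diversity map takes values in $\N$, we may view $\Z\D$ in the relational language consisting, for each arity $k\geq 2$ and each value $m\in\N$, of a $k$-ary relation symbol interpreted as the set of tuples $(x_1,\dots,x_k)$ with $\delta(\{x_1,\dots,x_k\})=m$. This language is countable, and because $\Z\D$ is by definition the Fraïssé limit of the class of finite integral diversities, it is automatically ultrahomogeneous.

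It then remains only to invoke the two lemmas established earlier in this section: the lemma asserting that $\Z\D$ satisfies $\sigma$-SAP (which rests on the existence of the diversity amalgam of Definition \ref{def.amalgam.diversities} over a non-empty, possibly infinite, common subset) and the lemma asserting that $\Aut(\Z\D)$ weakly eliminates imaginaries. With all three hypotheses of Theorem \ref{thm:omega-SAP.dissociated} verified, the theorem applies and yields that $\Aut(\Z\D)$ is dissociated.

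The genuine content is not in this final assembly but in the two cited lemmas, and the main obstacle is $\sigma$-SAP. Its proof hinges on checking that the formula $\delta(A)=\min\{\sum_{i=1}^n\delta_{k_i}(E_i)\}$ ranging over connected covers of $A$ by subsets of the two factors genuinely defines an integral diversity restricting correctly to each side; the delicacy lies in verifying the diversity triangle inequality for this amalgam and in confirming that the construction works over an arbitrary (in particular infinite) common subspace. Weak elimination of imaginaries is the other delicate input, whose full justification the paper defers. Once these are granted, the present theorem is a direct corollary of Theorem \ref{thm:omega-SAP.dissociated}.
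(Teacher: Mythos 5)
Your proposal is correct and follows exactly the paper's route: both verify that $\Z\D$ is a countable ultrahomogeneous relational structure, invoke the lemmas on $\sigma$-SAP (via the diversity amalgam of Definition \ref{def.amalgam.diversities}) and on weak elimination of imaginaries, and conclude by Theorem \ref{thm:omega-SAP.dissociated}. You also correctly locate the real work in those two lemmas, the second of which the paper itself defers.
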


Recall that we denote by $\Q\D$ the Fraïssé limit of finite diversities whose diversity map only takes rational values. Even though $\Q\D$ does not satisfy $\sigma$-SAP, we can still obtain dissociation via approximating sequences. Indeed, by using the analogue of Kat\v{e}tov functions for diversities developed in \cite{BryantNiesTupper}, one can build an approximating sequence for $\Q\D$ consisting of dissociated groups and therefore obtain the following result by Theorem \ref{thm.approx.sequence}.

\begin{theorem}\label{thm.Aut(QD).dissociated}
    $\Aut(\Q\D)$ is dissociated. 
\end{theorem}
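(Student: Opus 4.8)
The plan is to invoke the approximating sequence criterion (Theorem \ref{thm.approx.sequence}) rather than Theorem \ref{thm:omega-SAP.dissociated}, since $\Q\D$ fails $\sigma$-SAP (as recorded in the preceding remark). The idea is to realize $\Q\D$ as an increasing union of \emph{discretized} diversities of finer and finer resolution, each of which is isomorphic to $\Z\D$ and hence has dissociated automorphism group by Theorem \ref{thm.Aut(ZD).dissociated}.

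Concretely, set $m_n := n!$, so that the value sets $\frac{1}{m_n}\N$ increase with $\bigcup_n \frac{1}{m_n}\N = \Q_{\ge 0}$ and $m_n \mid m_{n+1}$. For each $n$, the class of finite diversities valued in $\frac{1}{m_n}\N$ is a Fraïssé class: multiplying every diversity value by $m_n$ is a scale change identifying it with the class of finite integral diversities. Let $\Omega_n$ denote its Fraïssé limit; the same rescaling identifies $\Omega_n$ with $\Z\D$ as a permutation structure, so $\Aut(\Omega_n) = \Aut(\Z\D)$ is dissociated by Theorem \ref{thm.Aut(ZD).dissociated}. Since $\frac{1}{m_n}\N \subseteq \frac{1}{m_{n+1}}\N$, every diversity valued in the former is also valued in the latter, and this is the compatibility that will let the resolutions nest.

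First I would build the chain using the Kat\v{e}tov functions for diversities of \cite{BryantNiesTupper}. Given $\Omega_n$ (a countable diversity valued in $\frac{1}{m_{n+1}}\N$ as well), I define $\Omega_{n+1}$ as its Urysohn-ification within the class of $\frac{1}{m_{n+1}}\N$-valued diversities, obtained by iterating the one-point Kat\v{e}tov extension; this realizes $\Omega_n$ as a subdiversity of a countable universal ultrahomogeneous $\frac{1}{m_{n+1}}\N$-valued diversity, which is therefore isomorphic to $\Omega_{n+1}$. Functoriality of the Kat\v{e}tov construction---an automorphism $h$ of $\Omega_n$ acts on Kat\v{e}tov functions by $g \mapsto g \circ h^{-1}$---yields a canonical extension embedding $\theta_n \colon G_n \hookrightarrow G_{n+1}$, where $G_n := \Aut(\Omega_n)$. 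Setting $\Omega := \bigcup_n \Omega_n$, one checks that $\Omega$ is a countable diversity valued in $\bigcup_n \frac{1}{m_n}\N = \Q_{\ge 0}$ that is universal and ultrahomogeneous for finite rational diversities (every finite rational diversity has its values in some $\frac{1}{m_n}\N$ and embeds into $\Omega_n$, and finite partial isomorphisms extend using ultrahomogeneity of the $\Omega_n$), hence $\Omega \cong \Q\D$ by Fraïssé uniqueness. The same stagewise ultrahomogeneity shows that $\bigcup_n \iota_n(G_n)$ is dense in $\Aut(\Q\D)$: any finite partial isomorphism of $\Q\D$ involves finitely many points and finitely many rational values, all lying in some $\Omega_n$, and is therefore realized by an element of $G_n$. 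This exhibits $G_0 \hookrightarrow G_1 \hookrightarrow \cdots \hookrightarrow \Aut(\Q\D)$ as an approximating sequence of dissociated groups, and Theorem \ref{thm.approx.sequence} concludes.

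The main obstacle is entirely contained in the diversity Kat\v{e}tov machinery: one must verify that the one-point extension is well defined and functorial for diversities (so that automorphisms lift to the extension embeddings $\theta_n$), that iterating it over a countable base produces the Fraïssé limit of the corresponding value class, and---the genuinely new point compared with a single Urysohn-type space---that these constructions are compatible across the increasing resolutions $\frac{1}{m_n}\N \subseteq \frac{1}{m_{n+1}}\N$, so that the nested union is exactly $\Q\D$ rather than a diversity with a sparser value set. Granting the analogue of Kat\v{e}tov's theory from \cite{BryantNiesTupper}, these verifications are routine.
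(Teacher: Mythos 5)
Your proof takes essentially the same route as the paper's: the paper's (very brief) argument is precisely to build an approximating sequence for $\Q\D$ consisting of dissociated groups via the diversity analogue of Kat\v{e}tov functions from \cite{BryantNiesTupper} and then invoke Theorem \ref{thm.approx.sequence}. Your concrete instantiation --- rescaled copies of $\Z\D$ at resolutions $\tfrac{1}{n!}\N$, glued by functorial Kat\v{e}tov extensions, with the union identified with $\Q\D$ by Fra\"iss\'e uniqueness --- supplies exactly the details the paper leaves implicit, mirroring the treatment of $\Q\U$ in \cite{BJJ24}.
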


\begin{proposition}
    $\Aut(\Z\D)$ and $\Aut(\Q\D)$ have the Howe-Moore property with respect to coarsely bounded sets. 
\end{proposition}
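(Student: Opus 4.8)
The plan is to apply Theorem \ref{thm.Howe.Moore.dissociated} to each of the groups $G=\Aut(\Z\D)$ and $G=\Aut(\Q\D)$, taking for $\BB$ the bornology $\CC\BB$ of coarsely bounded sets. Since $\CC\BB$ is trivially contained in itself, $(G,\CC\BB)$ is a topological bornological group, so the theorem applies as soon as $G$ acts without fixed point on its domain and is dissociated. Dissociation is exactly the content of Theorems \ref{thm.Aut(ZD).dissociated} and \ref{thm.Aut(QD).dissociated}, while the absence of fixed points follows from ultrahomogeneity: the relevant Fraïssé classes contain two-point diversities, so the limit is transitive on points. By the equivalence \ref{item.1} $\Leftrightarrow$ \ref{item.5} of Theorem \ref{thm.Howe.Moore.dissociated}, the Howe-Moore property with respect to $\CC\BB$ is then equivalent to the assertion that for every point $a$ of the domain, the point stabilizer $G_a$ is coarsely bounded.

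First I would thus reduce the whole statement to the single claim that $\Aut(\Z\D)_a$ and $\Aut(\Q\D)_a$ are coarsely bounded. Recalling that Roelcke precompact groups are coarsely bounded \cite{Rosendal}, it suffices to prove that these point stabilizers are Roelcke precompact. This is the precise diversity analog of the situation for metrically homogeneous graphs treated in Sections \ref{sec.no.odd} and \ref{sec.no.simplex}, where the Roelcke precompactness of point stabilizers is supplied by Rosendal's \cite[Thm.~3.5]{Rosendal}.

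The main obstacle is establishing this Roelcke precompactness in the setting of diversities, where Rosendal's theorem does not directly apply. Here I would follow Hallbäck's analysis \cite{Hallbäck} of the automorphism groups of the integral and rational Urysohn diversities to obtain the analogous statement. Concretely, one fixes a point $a$ and studies the double coset space $G_a\backslash G/G_a$: given a net in $G$, one passes to a subnet along which the values $\delta(E)$ of finitely many relevant finite configurations $E$ stabilize, and then uses the canonical diversity amalgam of Definition \ref{def.amalgam.diversities} to realize the required partial isomorphisms fixing $a$. The delicate point, exactly as for metrically homogeneous graphs of infinite diameter, is controlling the unbounded diversity values; this is why only local Roelcke precompactness is available and why the natural bornology is that of coarsely bounded sets rather than relatively compact sets. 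Once the point stabilizers are known to be coarsely bounded, Theorem \ref{thm.Howe.Moore.dissociated} immediately yields the Howe-Moore property with respect to $\CC\BB$ for both $\Aut(\Z\D)$ and $\Aut(\Q\D)$, completing the argument.
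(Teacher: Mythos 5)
Your overall reduction is the same as the paper's: invoke dissociation (Theorems \ref{thm.Aut(ZD).dissociated} and \ref{thm.Aut(QD).dissociated}) and then use the equivalence in Theorem \ref{thm.Howe.Moore.dissociated} to reduce the Howe--Moore property to boundedness of the stabilizer of a point (or of a non-empty finite set). Up to that point your argument is fine.

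The gap is in how you establish that boundedness. You propose to show that the point stabilizers are \emph{Roelcke precompact} and then use that Roelcke precompact groups are coarsely bounded. For $\Aut(\Z\D)$ this is workable (mixed configurations through $a$ have diversity values bounded by sublinearity over $\{a\}$, and bounded \emph{integer} values leave only finitely many isomorphism types, hence finitely many double cosets). But for $\Aut(\Q\D)$ the point stabilizer is \emph{not} Roelcke precompact: take $x$ with $\delta(\{a,x\})=1$ and $V=(G_a)_x$; the double coset $VgV$ of $g\in G_a$ is determined by the diversity type of $\{a,x,gx\}$, and while $\delta(\{x,gx\})\le\delta(\{a,x\})+\delta(\{a,gx\})=2$, every rational value in $(0,2]$ is realized, so $V\backslash G_a/V$ is infinite. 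This is exactly the same phenomenon as for the rational Urysohn space $\Q\U$, whose point stabilizers are coarsely bounded but not Roelcke precompact; it is why the earlier corollary on $\Isom(\U_\Delta)$ cites Rosendal's Theorem 6.31 and Example 6.32 rather than Roelcke precompactness. So your strategy cannot be carried out for $\Aut(\Q\D)$, and the $\Q\D$ half of the statement is not proved.

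The paper's route avoids this: it observes that the diversity amalgam of Definition \ref{def.amalgam.diversities} (applied to finite diversities, integral or not) is a \emph{functorial amalgamation} over a non-empty finite subset in the sense of Rosendal \cite[Def.~6.30]{Rosendal}, whence $G_A$ is coarsely bounded for every non-empty finite $A$ by \cite[Thm.~6.31]{Rosendal}, for both $\Z\D$ and $\Q\D$. If you want to keep your structure, replace the Roelcke precompactness step by this functorial-amalgamation argument (or by a direct verification of coarse boundedness); Roelcke precompactness is strictly too strong a target in the rational case.
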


\begin{proof}
    Let $G$ be either $\Aut(\Z\D)$ or $\Aut(\Q\D)$. Then $G$ is dissociated by Theorems \ref{thm.Aut(ZD).dissociated} and \ref{thm.Aut(QD).dissociated}. We leave to the cautious reader the proof of the fact that the amalgamation of finite diversities (the above definition applies to \textit{finite} diversities, even if the diversity map takes non-integer values) is a functorial amalgamation in the sense of Rosendal \cite[Def.~6.30]{Rosendal}. Therefore, by Theorem 6.31 of \cite{Rosendal}, $G_A$ is coarsely bounded for every non-empty finite subset $A$ of either $\Z\D$ or $\Q\D$. Finally, Theorem \ref{thm.Howe.Moore.dissociated} allows us to conclude that $G$ has the Howe-Moore property with respect to coarsely bounded sets. 
\end{proof}

\subsection{Free amalgamation property}
The aim of this section is to prove that the automorphism group of a Fraïssé limit with the free amalgamation property is dissociated, by showing that such structures have $\sigma$-SAP. 

\begin{definition}
    A Fra\"iss\'e limit $\mathbf{M}$ in a relational language $\mathcal{L}$ satisfies the free amalgamation property (FAP) if for all $\A,\B_1,\B_2\in\mathrm{Age}(\M)$ and all embeddings $f_i\colon\A\hookrightarrow \B_i$, there exists $\mathbf{C}\in\mathrm{Age}(\M)$ and embeddings $g_i \colon \B_i\hookrightarrow \mathbf{C}$ such that   
    \begin{itemize}
        \item the diagram  \begin{tikzcd}
                      & \mathbf{C}                                      &                        \\
\B_1 \arrow[ru, "g_1"] &                                        & \B_2 \arrow[lu, "g_2"'] \\
                      & \A \arrow[lu, "f_1"] \arrow[ru, "f_2"'] &                       
\end{tikzcd} commutes,

\item if there exist $b_i\in \B_i$ satisfying $g_1(b_1)=g_2(b_2)$, then there exists $a\in \A$ satisfying $f_i(a)=b_i$,
\item for every relation $R\in \mathcal{L}$ and every tuple $\overline{c}$ of elements in $\mathbf{C}$ such that $R^\mathbf{C}(\overline{c})$ holds, then $\overline{c}$ belongs either to $g_1(\B_1)$ or to $g_2(\B_2)$.
    \end{itemize}
\end{definition}

We refer to \cite[Example~2.3]{MacphersonTent} for some examples of Fraïssé limits with the free amalgamation property (FAP). Since all of them are $\aleph_0$-categorical (and therefore Roelcke-precompact), we give below an example of a Fraïssé limit satisfying (FAP) whose automorphism group is not Roelcke-precompact. 

\begin{example}
    Given a graph $\Gamma$, denote by $E(G)$ the set of its edges. Suppose $\Gamma$ is endowed with a map $c\colon E(\Gamma)\to\N$, thought of as a coloring of its edges. It can then be viewed as a structure in the language $\LL=(R_n)_{n\geq 0}$, where each $R_n$ is a relation of arity $2$ whose interpretation $R_n^\Gamma$ is given by \[R_n^\Gamma\coloneqq\{(x,y)\in V(\Gamma)^2\colon \{x,y\}\in E(\Gamma)\text{ and }c(\{x,y\})=n\}.\]
    Let $\CC$ be the class of all structures $(\Gamma,c)$ such that $\Gamma$ is a finite graph and $c\colon E(\Gamma)\to\N$. Then $\CC$ is a Fraïssé class with the free amalgamation property. Let $\M$ be its Fraïssé limit. Let us show that $\Aut(\M)$ is not Roelcke-precompact. Since $\M$ is ultrahomogeneous and one-point substructures of $\M$ are all isomorphic, then $\Aut(\M)$ acts transitively on its domain. However, there are countably many isomorphism types of two-point substructures in $\M$ which are given by the values of $c$. Therefore, there are infinitely many orbits for the action of $\Aut(\M)$ on pairs of points. Thus, the action of $\Aut(\M)$ on $\dom(\M)$ is not oligomorphic. This shows that $\Aut(\M)$ is not Roelcke-precompact by \cite[Thm.~2.4]{Tsankov}. 
\end{example}

However, the above example is coarsely bounded as a consequence of the following. 

\begin{lemma}
    Let $\M$ be a Fraïssé limit with the free amalgamation property. Then $\Aut(\M)$ is coarsely bounded.
\end{lemma}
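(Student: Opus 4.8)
The plan is to invoke Rosendal's combinatorial characterization of coarse boundedness \cite{Rosendal}: a Polish group $G$ is coarsely bounded if and only if for every open neighborhood $V$ of the identity there exist a finite subset $F\subseteq G$ and an integer $k\geq 1$ with $G=(FV)^k$. Every neighborhood of $1$ in $G\coloneqq\Aut(\M)$ contains a pointwise stabilizer $G_A$ of a finite substructure $A\subseteq\M$, and if $G=(FG_A)^k$ with $G_A\subseteq V$ then $G=(FV)^k$; so it suffices to verify the criterion for $V\coloneqq G_A$ with $A$ finite. I will do this with $F=\{1,s\}$ and $k=3$ for a single well-chosen $s\in G$, i.e.\ I will prove $G=VsVsV$.

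The structural input from (FAP) is that all pairs of copies of $A$ in \emph{free position} form one $G$-orbit. Call two copies $A_1,A_2$ of $A$ \emph{free} if the substructure they induce is the free amalgam of $A_1$ and $A_2$ over the empty structure (no relation holds across the two copies, as in the third clause of (FAP)). Applying (FAP) with $\A=\emptyset$, the free amalgam of two copies of $A$ lies in $\mathrm{Age}(\M)$ and hence embeds into $\M$; by ultrahomogeneity this yields $s\in G$ with $A$ and $sA$ free. Since the free amalgam is canonical, any two free pairs are isomorphic via a map matching the respective copies, and this isomorphism extends to an element of $G$ by ultrahomogeneity. Thus freeness is a single $G$-orbit on ordered pairs, and in coset terms $g_1A$ and $g_2A$ are free precisely when $g_1^{-1}g_2\in VsV$.

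The last ingredient is that any two copies of $A$ admit a common free partner. Given $g\in G$, let $D$ be the finite substructure induced on $A\cup gA$; applying (FAP) with $\A=\emptyset$ to $D$ and a fresh copy of $A$ produces, via the extension property of the Fraïssé limit, a copy $A'$ of $A$ inside $\M$ that is free relative to all of $D$, hence free relative to both $A$ and $gA$. Writing $A'=h_1A$, freeness of $(A,A')$ gives $h_1\in VsV$, while freeness of $(A',gA)$, after applying $h_1^{-1}$, gives $h_1^{-1}g\in VsV$; therefore $g\in VsVsV$. As $g$ was arbitrary, $G=VsVsV=(\{1,s\}V)^3$, which is exactly the criterion for $V=G_A$, and the result follows.

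The conceptual heart is the observation that (FAP) collapses all free configurations into the single double coset $VsV$, so that coarse boundedness reduces to a diameter-two statement in the orbital graph on copies of $A$. The points demanding care are the correct invocation of Rosendal's criterion and the justification that the free amalgam over the empty structure both belongs to $\mathrm{Age}(\M)$ and embeds into $\M$ over a prescribed finite substructure; the latter is precisely the extension property characterizing the Fraïssé limit.
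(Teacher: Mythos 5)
Your proof is correct, but it takes a genuinely different route from the paper's. The paper disposes of the lemma in two lines: it observes that free amalgamation is a \emph{functorial} amalgamation of the Fra\"iss\'e class over the empty structure in the sense of Rosendal's Definition~6.30, and then cites his Theorem~6.31 as a black box to conclude coarse boundedness. You instead verify Rosendal's combinatorial criterion $G=(FV)^k$ directly, with $F=\{1,s\}$ and $k=3$, by showing that the free pairs of copies of $A$ form a single double coset $VsV$ and that any two copies of $A$ admit a common free partner (so the orbital graph on copies of $A$ has diameter at most $2$). In effect you re-prove the relevant special case of Rosendal's theorem from scratch; what this buys is a self-contained and quantitatively explicit argument ($G=VsVsV$ for every basic neighborhood $V=G_A$), at the cost of redoing work the paper outsources. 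The argument is sound: the reduction to $V=G_A$ is legitimate because pointwise stabilizers of finite sets form a neighborhood basis of the identity, the single-orbit claim for free pairs follows from ultrahomogeneity once copies of $A$ are treated as enumerated tuples (which you should make explicit, so that ``$A'=h_1A$'' matches coordinates and freeness of $(A,hA)$ really is equivalent to $h\in VsV$), and the common free partner comes from the extension property applied to the free amalgam of $D=\langle A\cup gA\rangle$ with a fresh copy of $A$ over $\emptyset$. Two trivial cases worth a half-sentence each: $A=\emptyset$ is immediate, and the disjointness of $A'$ from $D$ (built into the free amalgam over $\emptyset$) is what guarantees that no relation crosses between $A'$ and either of $A$, $gA$.
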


\begin{proof}
    Let $\CC$ be the Fraïssé class whose limit is $\M$. It is straightforward to check that the free amalgamation provides a functorial amalgamation of $\CC$ over the empty structure $\emptyset$ in the sense of Rosendal \cite[Def.~6.30]{Rosendal}. Therefore, $\Aut(\M)$ is coarsely bounded by Theorem 6.31 of \cite{Rosendal}.
\end{proof}

\begin{lemma}\label{lem.free.noAlg.WEI}
    Let $\M$ be a Fraïssé limit with the free amalgamation property. Then $\Aut(\M)$ has no algebraicity and weakly eliminates imaginaries.
\end{lemma}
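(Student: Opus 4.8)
The plan is to prove the two properties separately, treating \emph{no algebraicity} as the easy consequence of the second (strong amalgamation) clause of \textbf{(FAP)} and \emph{weak elimination of imaginaries} as the consequence of the third (freeness) clause.

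First I would establish no algebraicity, which is exactly the statement that the age of $\M$ has the strong amalgamation property. Fix a finite $A\subseteq\dom(\M)$ and a point $b\notin A$; let $\A$ be the substructure on $A$ and $\B$ the substructure on $A\cup\{b\}$, with $f_1=f_2\colon\A\hookrightarrow\B$ the inclusion. Applying the freeness definition with $\B_1=\B_2=\B$, the second clause forces $g_1(b)\neq g_2(b)$ (otherwise $b\in A$), so we produce two distinct realizations of the isomorphism type of $b$ over $A$. Iterating strong amalgamation yields, for every $n$, a substructure of $\M$ containing $n$ distinct points realizing $\tp(b\mid A)$; by ultrahomogeneity all such realizations lie in a single $G_A$-orbit, so that orbit is infinite. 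Hence $G=\Aut(\M)$ has no algebraicity, and taking $A=\emptyset$ shows in particular that $G$ acts on $\dom(\M)$ with no fixed point.

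Next I would reduce \textbf{WEI} to a generation statement. Since $G$ has no fixed point, Lemma~\ref{lem.caracterisation.noAlg.WEI} applies, and there the conjunction ``no algebraicity and weak elimination of imaginaries'' is equivalent to the condition that $\langle G_A,G_B\rangle=G_{A\cap B}$ for all finite $A,B\subseteq\dom(\M)$. As we have already secured no algebraicity, it suffices to prove this equality; the inclusion $\langle G_A,G_B\rangle\subseteq G_{A\cap B}$ is immediate, so the content is $G_{A\cap B}\subseteq\langle G_A,G_B\rangle$. Writing $Z\coloneqq A\cap B$ and $H\coloneqq\langle G_A,G_B\rangle$, note that $H$ is open in $G_Z$ (it contains the open subgroup $G_A$), hence closed; therefore $H=G_Z$ as soon as $H$ is dense in $G_Z$. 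Density means that every finite partial isomorphism of $\M$ fixing $Z$ pointwise extends to an element of $H$, equivalently that for each $g\in G_Z$ and each finite tuple $\bar s$ there is $h\in H$ with $h(\bar s)=g(\bar s)$.

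The hard part is this density step, and it is exactly where the freeness clause of \textbf{(FAP)} enters. The plan is to connect $\bar s$ to $g(\bar s)$ by a finite sequence of elementary moves, each lying in $G_A$ or in $G_B$. A naive two-step factorization fails, because a move fixing $A$ pointwise necessarily displaces $B\setminus Z$, after which no subsequent move can fix $B$ pointwise; so I would route through an intermediate configuration that is freely placed over $Z$, using the third clause of \textbf{(FAP)} to amalgamate the relevant finite configuration with itself over $Z$ so that the $A$-side and the $B$-side carry no relations across the amalgam. This independence decouples the two sides and lets each elementary move fix $A$ (respectively $B$) pointwise while the composite still reaches $g(\bar s)$ exactly, the final matching being supplied by ultrahomogeneity. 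Carrying out this back-and-forth and bookkeeping the finitely many moves is the main obstacle; once it is done, density gives $H=G_Z$, and Lemma~\ref{lem.caracterisation.noAlg.WEI} then yields weak elimination of imaginaries. (Everything else---the cloning argument, the openness of $H$, and the reduction through Lemma~\ref{lem.caracterisation.noAlg.WEI}---is routine.)
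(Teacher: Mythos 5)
Your overall architecture is sound and is genuinely different from the paper's proof, which simply invokes Lemmas 2.5 and 2.7 of Macpherson--Tent together with Lemma \ref{lem.caracterisation.noAlg.WEI}. The first half is complete: the cloning argument for no algebraicity is correct, and the reduction of the whole statement to the generation identity $\langle G_A,G_B\rangle=G_{A\cap B}$ via Lemma \ref{lem.caracterisation.noAlg.WEI}, together with the observation that $\langle G_A,G_B\rangle$ is open hence closed in $G_{A\cap B}$, is exactly the right move.

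The gap is that the density step --- the only place where freeness (as opposed to mere strong amalgamation) is actually used --- is not carried out, and your description of it glosses over the genuine difficulty. The obstruction is this: a $G_B$-move sends the tracked tuple to a tuple with the \emph{same type over} $B$, hence it preserves every relation between the moved points and $B\setminus Z$ (and symmetrically for $G_A$-moves and $A\setminus Z$). So you cannot reach a configuration ``freely placed over $Z$'' in one move, nor by ``amalgamating the configuration with itself over $Z$''; the order of operations and the set you amalgamate over both matter. The correct execution uses the stationary independence relation $X\indep_D Y$ given by free amalgamation (existence and stationarity of free extensions). Write $Z=A\cap B$, fix $g\in G_Z$ and a finite tuple $\bar s$ enumerating a set containing $A\cup B$. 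First apply $h_1\in G_A$ sending $\bar s$ to a realization of $\tp(\bar s/A)$ that is free from $B$ over $A$ (this kills all relations to $B\setminus Z$ while $A$ stays put); then apply $h_2\in G_B$ sending the result to a realization of its type over $B$ that is free from $A$ over $B$ (this kills all relations to $A\setminus Z$ and, since the type over $B$ already carried no relations to $B\setminus Z$, creates none). After these two moves the tuple is free from $A\cup B$ over $Z$. Do the same to $g(\bar s)$, obtaining $k_2k_1$. Finally, stationarity says that any two realizations of $\tp(\bar s/Z)$ that are both free from $A$ over $Z$ have the same type over $A$, so a single further $G_A$-move $m$ matches the two free configurations, and $k_1^{-1}k_2^{-1}mh_2h_1$ agrees with $g$ on $\bar s$. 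Without some such explicit factorization (and the verification that each intermediate configuration is realized in $\M$ via the extension property and ultrahomogeneity), the proof of weak elimination of imaginaries is not complete.
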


\begin{proof}
    This is the combination of Lemma 2.5 and 2.7 in \cite{MacphersonTent} and Lemma \ref{lem.caracterisation.noAlg.WEI}. 
\end{proof}

\begin{theorem}\label{thm.FAP.sigmaSAP}
    Let $\M$ be a Fraïssé limit satisfying the free amalgamation property. Then $\M$ satisfies $\sigma$-SAP.  
\end{theorem}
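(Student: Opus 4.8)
The plan is to build, for any $\A,\B_1,\B_2\in\sigma\mathrm{Age}(\M)$ and embeddings $f_i\colon\A\hookrightarrow\B_i$, the \emph{canonical free amalgam} $\mathbf{C}$ and to check directly that it witnesses $\sigma$-SAP. Its underlying set is the pushout of sets $\dom(\B_1)\sqcup_{\dom(\A)}\dom(\B_2)$, obtained from the disjoint union $\dom(\B_1)\sqcup\dom(\B_2)$ by identifying $f_1(a)$ with $f_2(a)$ for every $a\in\dom(\A)$; write $g_i\colon\B_i\to\mathbf{C}$ for the resulting maps. The relations on $\mathbf{C}$ are imposed \emph{freely}: for a relation $R\in\LL$ and a tuple $\bar c$ of elements of $\mathbf{C}$, we declare $R^{\mathbf{C}}(\bar c)$ to hold if and only if $\bar c$ lies entirely in $g_1(\B_1)$ and pulls back to a tuple satisfying $R^{\B_1}$, or lies entirely in $g_2(\B_2)$ and pulls back to a tuple satisfying $R^{\B_2}$. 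Since $f_1,f_2$ are embeddings, the two prescriptions agree on the common image of $\A$, so this is well defined. By construction the square commutes, giving condition (1) of Definition \ref{defn:SAPAS}; and since the only identifications in the pushout are along $\A$, an equality $g_1(b_1)=g_2(b_2)$ forces the existence of $a\in\dom(\A)$ with $f_1(a)=b_1$ and $f_2(a)=b_2$, which is condition (2). Note that this construction never uses the cofiniteness hypothesis $\lvert\dom(\B_i)\setminus\dom(f_i(\A))\rvert<+\infty$; the free amalgamation property in fact yields strong amalgamation over arbitrary countable subsets.

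It remains to prove $\mathbf{C}\in\sigma\mathrm{Age}(\M)$, which is the crux of the argument. Since $\mathbf{C}$ is countable, by universality of the Fraïssé limit it suffices to show that every finite substructure of $\mathbf{C}$ lies in $\mathrm{Age}(\M)$. So fix a finite $D\subseteq\dom(\mathbf{C})$ and set $D_1\coloneqq D\cap g_1(\dom(\B_1))$, $D_2\coloneqq D\cap g_2(\dom(\B_2))$ and $D_0\coloneqq D_1\cap D_2$; then $D=D_1\cup D_2$ and, by the pushout description, $D_0$ is contained in the common image of $\A$. The freeness rule defining the relations of $\mathbf{C}$ shows that the induced substructure on $D$ is exactly the free amalgam of the finite substructures of $\B_1$ and $\B_2$ supported on $D_1$ and $D_2$, amalgamated over the finite substructure supported on $D_0$. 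These three finite pieces are substructures of $\B_1,\B_2\in\sigma\mathrm{Age}(\M)$, hence lie in $\mathrm{Age}(\M)$; applying the free amalgamation property at the finite level, and hereditarity of $\mathrm{Age}(\M)$ to pass to the substructure generated by the two copies, shows that their free amalgam---namely the substructure of $\mathbf{C}$ on $D$---belongs to $\mathrm{Age}(\M)$.

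The main obstacle is precisely this localization step: one must observe that the free amalgam of countable structures restricts, on every finite subset, to the free amalgam of the corresponding finite substructures, so that the global membership $\mathbf{C}\in\sigma\mathrm{Age}(\M)$ reduces to the finite instance of the free amalgamation property supplied by hypothesis. Once this is granted, universality of $\M$ provides an embedding $\mathbf{C}\hookrightarrow\M$, identifying $\mathbf{C}$ with an element of $\sigma\mathrm{Age}(\M)$, and the two conditions verified above complete the proof that $\M$ satisfies $\sigma$-SAP.
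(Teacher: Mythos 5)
Your proof is correct and follows essentially the same route as the paper: form the canonical free amalgam $\mathbf{C}$ of $\B_1$ and $\B_2$ over $\A$ and observe that it lies in $\sigma\mathrm{Age}(\M)$ because every finite substructure of $\mathbf{C}$ is (contained in) a finite free amalgam, hence in $\mathrm{Age}(\M)$ by the free amalgamation property and hereditarity. You merely spell out the localization-to-finite-substructures step that the paper leaves implicit.
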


\begin{proof}
Fix $\A,\B_1,\B_2\in\sigma\mathrm{Age}(\M)$ and embeddings $f_i\colon\A\hookrightarrow\B_i$. Consider $\mathbf{C}$ the free amalgamation of $\B_1$ and $\B_2$ over $\A$, i.e. the union of $\B_1$ and $\B_2$ identifying $\A$ and where all the relations are the ones from $\B_1$ and $\B_2$. $\mathbf{C}$ is well-defined and is in $\sigma\mathrm{Age}(\M)$ as all of its finite substructures are in $\mathrm{Age}(\M)$.
\end{proof}

\begin{theorem}\label{thm.free.dissociated}
    If $\M$ is a Fraïssé limit satisfying the free amalgamation property, then $\Aut(\M)$ is dissociated. 
\end{theorem}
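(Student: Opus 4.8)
The plan is to assemble the structural results already established in this section, the most direct route being a one-line deduction. A Fraïssé limit $\M$ is by construction a countable ultrahomogeneous relational structure, so the only two hypotheses of Theorem \ref{thm:omega-SAP.dissociated} that require checking are $\sigma$-SAP and weak elimination of imaginaries. The first is exactly the content of the theorem just proved: the free amalgam of $\B_1$ and $\B_2$ over $\A$ adds no relations across the two pieces, hence lies in $\sigma\mathrm{Age}(\M)$, and it makes no identifications beyond those forced by $\A$, so both clauses of Definition \ref{defn:SAPAS} hold. The second is provided verbatim by Lemma \ref{lem.free.noAlg.WEI}. With these two inputs, Theorem \ref{thm:omega-SAP.dissociated} applies directly and yields that $\Aut(\M)$ is dissociated; this settles the statement unconditionally and requires no new computation.

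For the promised second proof via approximating sequences, I would instead exhibit $\Aut(\M)$ as a coherent limit of dissociated groups and invoke Theorem \ref{thm.approx.sequence}. The idea is to write the domain as an increasing union $\Omega=\bigcup_{n\geq 0}\Omega_n$ of infinite subsets, each carrying a closed permutation group $G_n\leq\Sym(\Omega_n)$ that is dissociated, together with extension embeddings $\theta_n\colon G_n\hookrightarrow G_{n+1}$ whose composites $\iota_n(G_n)$ have dense union in $\Aut(\M)$. The natural candidate for $\Omega_n$ is the domain of a substructure $\M_n\subseteq\M$ built by iterated free amalgamation of many copies, chosen so that the induced action of $\Aut(\M_n)$ on $\Omega_n$ again has no algebraicity and enough amalgamation at the finite stage to fall under the same dissociation criterion; free amalgamation is precisely what guarantees that these substructures embed coherently and that no relation is created between freshly glued pieces, so that the $\theta_n$ are genuine extension embeddings and the union is dense by ultrahomogeneity.

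The hard part will be this second proof: arranging the approximating sequence so that each intermediate group $G_n$ is genuinely dissociated while keeping the union dense. The delicate point is no algebraicity at each finite stage, since naive blow-ups fail — taking disjoint copies of a finite structure with more than one point creates finite orbits under point stabilisers, which by Lemma \ref{lem.dissociated.noAlg.WEI} already rules out dissociation — so the construction must ensure that every pointwise stabiliser $G_{n,\{a\}}$ has only infinite orbits on $\Omega_n\setminus\{a\}$. Once such a sequence is in place the conclusion is immediate from Theorem \ref{thm.approx.sequence}. Since the first argument already proves the theorem outright, I would present it as the main proof and treat the approximating-sequence construction as an illustrative alternative.
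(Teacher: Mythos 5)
Your main argument is correct and is exactly the paper's proof: combine Lemma \ref{lem.free.noAlg.WEI} (weak elimination of imaginaries) with the fact that free amalgamation yields $\sigma$-SAP, then apply Theorem \ref{thm:omega-SAP.dissociated}. Your sketch of the alternative proof via approximating sequences is not needed for correctness and, for what it is worth, misses the device the paper actually uses there — truncating the language to finitely many relation symbols so that each approximant $\M_n$ is oligomorphic (with no algebraicity and weak elimination of imaginaries) and hence dissociated by Theorem \ref{thm.oligomorphic.dissociated} — which sidesteps the difficulties you anticipate with blow-ups and finite orbits.
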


\begin{proof}
     Let $\M$ be a Fraïssé limit satisfying the free amalgamation property. Then $\M$ weakly eliminates imaginaries by Lemma \ref{lem.free.noAlg.WEI} and satisfies $\sigma$-SAP by Theorem \ref{thm.FAP.sigmaSAP}. Finally, Theorem \ref{thm:omega-SAP.dissociated} yields that $\Aut(\M)$ is dissociated. 
\end{proof}

\printbibliography

\Addresses
\end{document}